\newtheorem{definition}{Definition}
\newtheorem{theorem}{Theorem}
\newtheorem{lemma}{Lemma}
\newtheorem{remark}{Remark}
\newtheorem{assumption}{Assumption}
\newtheorem{example}{Example}
\newtheorem{stassumption}[assumption]{Standing Assumption}
\newtheorem{corollary}{Corollary}
\newcommand{\argmin}{\mathop{\mathrm{argmin}}\limits}
\newcommand{\Id}{ \textrm{Id}}
\newcommand{\bR} { {\mathbb R}}
\newcommand{\bN} { {\mathbb N}}
\newcommand{\fix} { {\mathrm{fix}}}
\newcommand{\zer} { {\mathrm{zer}}}
\newcommand{\diag} { {\mathrm{diag}}}
\newcommand{\prox}{\textrm{prox}}
\newcommand{\proj}{\textrm{proj}}
\newcommand{\ca}[1]{\mathcal{#1}}
\newcommand{\bld}[1]{\boldsymbol{#1}}
\newcommand{\1}{\bld 1}
\newcommand{\0}{\bld 0}
\newcommand{\col }{\mathrm{col}}
\title{\LARGE \bf
Asynchronous and time-varying proximal type dynamics\\ in multi-agent network games 
}
\author{Carlo Cenedese \and Giuseppe Belgioioso \and Yu Kawano \and Sergio Grammatico \and Ming Cao 
\thanks{C. Cenedese and M. Cao are with the Jan C. Wilems Center for Systems and Control, ENTEG, Faculty of Science and Engineering, University of Groningen, The Netherlands	({\texttt{\{c.cenedese, m.cao\}@rug.nl}})
 G.Belgioioso is with the Control System group, TU Eindhoven, 5600 MB Eindhoven, The Netherlands
(\texttt{g.belgioioso@tue.nl}).
Y. Kawano is with the Graduate School of Engineering, Hiroshima University, Japan (\texttt{ykawano@hiroshima-u.ac.jp}).
 S. Grammatico is with the Delft Center for Systems and Control, TU Delft, The Netherlands
(\texttt{{s.grammatico@tudelft.nl}}).
This work was partially supported by the EU Project \lq MatchIT' (82203), NWO under research project OMEGA (613.001.702) and P2P-TALES (647.003.003) and by the ERC under research project COSMOS (802348).}
}
\begin{document}

\maketitle
\thispagestyle{empty}
\pagestyle{empty}

\begin{abstract}
In this paper, we study proximal type dynamics in the context of noncooperative multi-agent network games. These dynamics arise in different applications, since they describe distributed decision making in multi-agent networks, e.g., in opinion dynamics, distributed model fitting and network information fusion, where the goal of each agent is to seek an equilibrium using local information only.
We analyse several conjugations of this class of games, providing convergence results, or designing equilibrium seeking algorithms when the original dynamics fail to converge.
For the games subject only to local constraints we look into both synchronous/asynchronous dynamics and time-varying communication networks. For games subject in addition to coupling constraints, we design an equilibrium seeking algorithm converging to a special class of game equilibria. 
Finally, we validate the theoretical results via numerical simulations on opinion dynamics and distributed model fitting.
\end{abstract}
\section{Introduction}

\subsection{Motivation: Multi-agent decision making over networks}
Multi-agent decision making over networks is currently a vibrant research area in the systems-and-control community, with applications in several relevant domains,  such as smart grids \cite{dorfer:simpson-porco:bullo:16, grammatico:18tcns}, traffic and information networks \cite{jaina:walrand:10}, social networks \cite{Ghaderi2014, etesami:basar:15}, consensus and flocking groups \cite{cao:yu:ren:2013:overview_distributed_multy_agent_coordination} and robotic \cite{cenedese:kawano:grammatico:cao:2018:time_varying_proximal_dynamics} and sensor networks \cite{martinez:bullo:cortes:frazzoli:07}, \cite{simonetto:leus:2014:distributed_ML_sensor_network}.
The main benefit that each decision maker, in short, \textit{agent}, achieves from the use of a distributed computation and communication, is to keep its own data private and to exchange information with selected agents only.  
Essentially, in networked multi-agent systems, the states (or the decisions) of each agent evolve as a result of \textit{local decision making}, e.g. local constrained optimization, and \textit{distributed communication} with some neighboring agents, via a communication network, usually modelled by a graph. Usually, the aim of the agents is to reach a collective equilibrium state, where no one can benefit from changing its state at that  equilibrium.

\subsection{Literature overview: Multi-agent optimization and multi-agent network games}

Multi-agent dynamics over networks embody the natural extension of  distributed optimization and equilibrium seeking problems in network games. In the past decade, this field has drawn attentions from research communities, leading to a wide range of results. 
Some examples of convex optimization constrained problems, subject to homogeneous constraint sets, can be found in \cite{nedic:ozdaglar:parrillo:10}, where a uniformly bounded subgradients and a complete communication graphs with uniform weights are considered; while in \cite{lee:nedic:13} the cost functions are assumed to be differentiable with Lipschitz continuous and uniformly bounded gradients.

Solutions for nooncoperative games over networks subject to convex compact local constraints have been developed also in \cite{parise:gentile:grammatico:lygeros:15} using strongly convex quadratic cost functions and time-invariant communication graphs; in \cite{koshal:nedic:shanbhag:16} \cite{salehisadaghiani:pavel:16}, using differentiable cost functions with Lipschitz continuous gradients, strictly convex cost functions, and undirected, possibly time-varying, communication graphs; and in \cite{cenedese:kawano:grammatico:cao:2018:time_varying_proximal_dynamics}
 where the communication is ruled by  a, possibly time-varying, digraph and the cost functions are assumed to be convex and lower semi-continuous. 
Some recent works developed algorithms to solve games over networks subject to asynchronous updates of the agents: in \cite{yi:pavel:19:asynch_distr_GNE_seeking} and \cite{cenedese:belg:gra:cao:2019:Asynch_ECC} the game is subject to affine coupling constraints, and the cost functions are differentiables, while the communication graph is assumed to be  undirected.

To the best of the our knowledge,  the only work that extensively focuses on multi-agent network games is \cite{grammatico:18tcns}, which considered general local convex costs and quadratic proximal terms, time-invariant and time-varying communication graphs, subject to technical restrictions. 

This paper aims at presenting and solving  new interesting problems, not yet considered in the context of multi-agent network games and generalizing available results as highlighted in the next section.


\subsection{Contribution of the paper}

Within the litterature on multi-aegnt network games, the most relevant works for our purposes are \cite{parise:gentile:grammatico:lygeros:15,grammatico:18tcns} and \cite{cenedese:kawano:grammatico:cao:2018:time_varying_proximal_dynamics}, the last being the starting point of this paper. Next, we highlight the main contribution of our work with respect to the aforementioned literature and these works in particular:
\begin{itemize}
\item We prove that a row stochastic adjacency matrix with self-loops describing a strongly connected graph is an average operator. Furthermore, we show this holds in a Hilbert space weighted by the diagonal matrix, with entries being the elements of the left Perron Frobenius eigenvector of that matrix. This is a general result that we exploit for our proximal type dynamics.
\item We establish the convergence for synchronous, asynchronous and time-varying dynamics in multi-agent network games, under the weak assumption of a communication network described by a row stochastich adjacency matrix. This weak assumption leads to several technical challenges that in \cite{grammatico:18tcns} are not present, since the matrix is always assumed to be doubly stochastic, and in \cite{cenedese:kawano:grammatico:cao:2018:time_varying_proximal_dynamics} are solved by modifying the original dynamics. 
We emphasize that in \cite{grammatico:18tcns, cenedese:kawano:grammatico:cao:2018:time_varying_proximal_dynamics}, the asynchronous dynamics are not considered.
\item We develop a semi-decentralized algorithm seeking normalized generalized network equilibrium, called Prox-GNWE, converging to a solution of the game subject to affine coupling constraints. Compared to the one developed in \cite[Eq.~28--31]{grammatico:18tcns}, Prox-GNWE  requires a lower number of communications between agents, and it can also be applied to games with a wider class of communication networks.
\end{itemize} 

%

\section{NOTATION}
\subsection{Basic notation}
The set of real, positive, and non-negative numbers are denoted by $\mathbb{R}$, $\mathbb{R}_{>0}$ and $\mathbb{R}_{\geq 0}$, respectively; $\overline{\mathbb{R}}:=\mathbb{R}\cup \{\infty\}$. The set of natural numbers is denoted by $\mathbb{N}$. For a square matrix $A \in \bR^{n\times n}$, its transpose is denoted by $A^\top$ and $[A]_{i}$ denotes the $i$-th row of the matrix, and  $[A]_{ij}$ the element in the $i$-th row and $j$-th column. $A\succ 0 $ ($A\succeq 0 $)  stands for a symmetric and positive definite (semidefinite) matrix,  while $>$ ($\geq$) describes an element wise inequality. $A\otimes B$ is the Kronecker product of the matrices $A$ and $B$.   The identity matrix is denoted by~$I_n\in\bR^{n\times n}$. $\0$ ($\1$) represents the vector/matrix with only $0$ ($1$) elements. For $x_1,\dots,x_N\in\mathbb{R}^n$ and $\ca N=\{1,\dots,N \}$, the collective vector is denoted  by $\boldsymbol{x}:=\mathrm{col}((x_i )_{i\in\ca N})=[x_1^\top,\dots ,x_N^\top ]^\top$ and $\bld x_{-i}:=\col(( x_j )_{j\in\ca N\setminus \{i\}})=[x_1^\top,\dots,x_{i-1}^\top,x_{i+1}^\top,\dots,x_{N}^\top]^\top$. Given the  $N$ matrices $A_1,\dots,A_N$, $\diag(A_1,\dots,A_N)$ denotes a block-diagonal matrix with the matrices $A_1,\dots,A_N$ on the diagonal. The null space of a matrix $A$ is denoted by $\mathrm{ker}(A)$.
The Cartesian product of the sets $\Omega_1, \dots, \Omega_N$ is described by $\prod^N_{i=1} \Omega_i$.  Given two vectors $x,y \in \bR^n$ and a symmetric and positive definite matrix $ Q \succ 0$, the weighted inner product and norm are denoted by $\langle\,  x \, | \, y \,\rangle_{Q}$ and $\lVert x \rVert_{Q}$, respectively; the $Q-$induced matrix norm is denoted by $\lVert A\rVert_{Q}$. A real $n$ dimensional Hilbert space obtained by endowing $\mathcal H=(\bR^n,\lVert \, \cdot \, \rVert)$  with the product $\langle\,  x \, | \, y \,\rangle_{Q}$ is denoted by $\mathcal{H}_Q$.

\subsection{Operator-theoretic notations and definitions}
The identity operator is defined by $\Id(\cdot)$. The indicator function $\iota_\mathcal{C}:\bR^n\rightarrow[0,+\infty]$ of $\mathcal{C}\subseteq \bR^n$ is defined as $\iota_\mathcal{C}(x)=0$ if $x\in\mathcal{C}$; $+\infty$  otherwise.
The set valued mapping $N_{\ca C}:\bR^n\rightrightarrows \bR^n$ stands for the normal cone to the set $\mathcal{C}\subseteq \bR^n$, that is $N_{\ca C}(x)= \{ u\in\bR^n \,|\, \mathrm{sup}\langle \ca C-x,u \rangle\leq 0\}$ if $x \in \ca C$ and  $\varnothing$ otherwise. The graph of a set valued mapping $\ca A:\ca X\rightrightarrows \ca Y$ is $\mathrm{gra}(\ca A):= \{ (x,u)\in \ca X\times \ca Y\, |\, u\in\ca A (x)  \}$. For a function $\phi:\bR^n\rightarrow\overline{\mathbb{R}}$, define $\mathrm{dom}(\phi):=\{x\in\bR^n|f(x)<+\infty\}$ and its subdifferential set-valued mapping, $\partial \phi:\mathrm{dom}(\phi)\rightrightarrows\bR^n$, $\partial \phi(x):=\{ u\in \bR^n | \: \langle y-x|u\rangle+\phi(x)\leq \phi(y)\, , \: \forall y\in\mathrm{dom}(\phi)\}$.  The projection operator over a closed set $S\subseteq \bR^n$ is $\textrm{proj}_S(x):\bR^n\rightarrow S$ and it is defined as $\textrm{proj}_S(x):=\mathrm{argmin}_{y\in S}\lVert y - x \rVert^2$. The proximal operator $\mathrm{prox}_f(x):\bR^n\rightarrow\mathrm{dom}(f)$ is defined by $\mathrm{prox}_f(x):=\mathrm{argmin}_{y\in\bR^n}f(y)+\textstyle\frac{1}{2}\lVert x-y\rVert^2$.
A set valued mapping $\ca F:\bR^n\rightrightarrows \bR^n$ is $\ell$-Lipschitz continuous with $\ell>0$, if $\lVert u-v \rVert \leq \ell \lVert x-y \rVert$ for all $(x,u)\, ,\,(y,v)\in\mathrm{gra}(\ca F)$; $\ca F$ is (strictly) monotone if for all $(x,u),(y,v)\in\mathrm{gra}(\ca F)$ $\langle u-v,x-y\rangle \geq (>)0$ holds, and  maximally monotone if it there is no monotone operator with a graph that strictly contains $\mathrm{gra}(\ca F)$; $\ca F$ is $\alpha$-strongly monotone if for all $(x,u),(y,v)\in\mathrm{gra}(\ca F)$ it holds $\langle x-y, u-v\rangle \geq \alpha \lVert x-y \rVert^2$. $\mathrm{J}_{\ca F} :=(\Id+\ca F)^{-1}$ denotes the resolvent mapping of $\ca F$. $\textrm{fix}(\ca A):=\{x \in \mathbb R^n | \, x \in \ca F(x)  \}$ and $\textrm{zer}(\ca F):=\{x \in \mathbb R^n | \,0 \in \ca F(x)  \}$ denote the set of fixed points and zeros of $\ca F$, respectively.
The operator $\ca A:\bR^n\rightarrow\bR^n$  is $\eta$-averaged  ($\eta$-AVG) in $\ca H_Q$, with $\eta\in(0,1)$, if $\lVert \ca A(x)-\ca A(y) \rVert^2_Q \leq \lVert x-y\rVert^2_Q-\frac{1-\eta}{\eta}\lVert (\Id-\ca A)(x)-(\Id-\ca A)(y)  \rVert^2_Q$, for all $x,y\in\bR^n$; $\ca A$ is nonexpansive (NE) if $1$-AVG; $\ca A$ is firmly nonexpansive (FNE) if $\frac{1}{2}$-AVG; $\ca A$ is $\beta$-cocoercive if $\beta\ca A$ is $\frac{1}{2}$-AVG (i.e., FNE).
%


\section{Mathematical setup and problem formulation} 
\label{sec:problem_formulation}

We consider a set of $N$ agents (or players), where the state (or strategy) of each agent $i\in\mathcal{N}:= \{1,\dots,N\}$ is denoted by $x_i\in\Omega_i\subset \mathbb{R}^n$. The set $\Omega_i$ represents all the feasible states of agent $i$, hence it is used to model the local constraints of each agent. Throughout the paper, we assume compactness and convexity of the local constraint set~$\Omega_i$.    

\smallskip
\begin{stassumption}[Convexity]
\label{ass:local_constrain}
For each $i\in\mathcal{N}$, the set $\Omega_i\subset \mathbb{R}^n$ is non-empty, compact and convex.
\hfill\QEDopen
\end{stassumption}
\smallskip

We consider rational (or myopic) agents, namely, each agent $i$ aims at minimizing a local cost function $g_i$, that we assume convex and with the following structure.
 
\smallskip
\begin{stassumption}[Proximal cost functions]
\label{ass:cost_function}
For each $i\in\mathcal{N}$, the strictly-convex function $g_i:\mathbb{R}^n\times \mathbb{R}^n\rightarrow\overline{\mathbb{R}}$ is defined by 
\begin{equation}
g_i(x_i,z) :=  f_{i}(x_i) + \iota_{\Omega_i}(x_i)+\textstyle{\frac{1}{2}}\lVert x_i-z\rVert^2,
\label{eq:def_costFunction_Gi}
\end{equation}
where $\bar f_i:= f_{i}+\iota_{\Omega_i}:\mathbb{R}^n\rightarrow\overline{\mathbb{R}}$ is a lower semi-continuous and convex function. 
\hfill\QEDopen
\end{stassumption}
\smallskip

We emphasize that Standing Assumption~\ref{ass:cost_function} requires neither the differentiability of the local cost function, nor the Lipschitz continuity and boundedness of its gradient.
In \eqref{eq:def_costFunction_Gi}, the function $\bar f_{i}$  is local to agent $i$ and models the local objective that the player would pursue if no coupling between agents is present. The quadratic term $\textstyle{\frac{1}{2}}\lVert x_i-z\rVert^2$ penalizes the distance between the state of agent $i$ and a given $z$. 
This term is referred to the literature as \textit{regularization } (see \cite[Ch.~27]{bauschke:combettes}), since it leads to a strictly convex $g_i$, even though the $f_i$ is only lower semi-continuous, see \cite[Th.~27.23]{bauschke:combettes}. 

\smallskip
We assume that the agents can communicate through a network structure, described by a weighted digraph. Let us represent the communication links between the agents by a weighted adjacency matrix $A\in\mathbb{R}^{N\times N}$ defined as $[A]_{ij}:=a_{i,j}$. For all $i,j \in \ca N$, $a_{i,j}\in[0,1]$ denotes the weight that agent $i$ assigns to the state of agent $j$. If $a_{i,j}=0$, then the state of agent $i$ is independent from that of agent $j$. The set of agents with whom agent $i$ communicates is denoted by $\ca N_i$.
The following assumption formalizes the communication network via a digraph and the associated adjacency matrix.

\smallskip
\begin{stassumption}[row stochasticity and self-loops]
\label{ass:row_stoch}
The communication graph is strongly connected. The matrix $A=[a_{i,j}]$ is row stochastic, i.e., $a_{i,j} \geq 0$ for all $i,j \in \ca N$, 
and $\sum_{j=1}^N a_{i,j}=1$, for all $i \in \ca N$. Moreover, $A$ has strictly-positive diagonal elements, i.e., $\min_{i\in\ca N}a_{i,i}=:\underline{a}>0$.~\hfill\QEDopen
\end{stassumption}
\smallskip

In our setup, the variable $z$ in \eqref{eq:def_costFunction_Gi} represents the average state among the neighbors of agent $i$, weighted through the adjacency matrix, i.e., $z\coloneqq \textstyle\sum_{j=1}^N a_{i,j}x_j$. Therefore the cost function of agent $i$ is $g_i \big( y,\textstyle\sum_{j= 1}^N a_{i,j}x_j \big)$. Note that a \textit{coupling} between the agents emerges in the local cost function, due to the dependence  on the other agents strategy.   
The problem just described can be naturally formalized as a noncooperative network game between the $N$ players, i.e.,
\begin{align}
\label{eq:unc_game_CF}
\forall i \in \ca N: \;
\begin{cases}
\textstyle
\argmin_{y \in \mathbb R^n}& \textstyle
 f_i(y)  + \frac{1}{2} \left\| y - \sum_{j=1}^N a_{i,j}x_j \right\|^2\\
\text{ s.t. } &  y \in \Omega_i.
\end{cases}
\end{align}

We consider rational agents that, at each time instant $k \in \mathbb N$, given the states of the neighbors, update their states/strategies according to the following myopic dynamics:
\begin{equation}
x_i(k+1) =  \argmin_{y\in\Omega_i} \; g_i \left(y,\textstyle\sum_{j=1}^N a_{i,j}x_j(k) \right) \:.
\label{eq:unc_game}
\end{equation}

These dynamics are relatively simple, yet arise in diverse  research areas. Next, we recall some papers in the literature where the dynamics studied are a special case of \eqref{eq:unc_game}.  

\begin{enumerate}
\item {\it Opinion dynamics:} In \cite{parsegov_friedkin:2017:novel_models_opinion_dynamics}, the authors study the Friedkin-Johnsen model, that is an extension of the DeGroot's model~\cite{degroot:1974:reaching_consensus}. The update rule in \cite[Eq.~1]{parsegov_friedkin:2017:novel_models_opinion_dynamics} is effectively the best response of a game with cost functions equal to
\begin{equation}
\label{eq:FJ cost fun}
g_i(x_i,z)\coloneqq \textstyle{\frac{1-\mu_i}{\mu_i}} \lVert x_i-x_i(0) \rVert^2+ \iota_{[0,1]^n}(x_i) + \lVert  x_i- z \rVert^2
\end{equation}
 where $\mu_i \in [0,1]$ represents the stubbornness of the player.
Thus, \cite[Eq.~1]{parsegov_friedkin:2017:novel_models_opinion_dynamics} is a special case of~\eqref{eq:unc_game}.

\smallskip
\item {\it Distributed model fitting:} One of the most common tasks in machine learning is model fitting and in fact several algorithms are proposed in literature, e.g. \cite{zou:2005:elasticnet,tibshirani:1996:lasso}. The idea is to identify the parameters $x$ of a linear model $Ax=b$, where $A$ and $b$ are obtained via experimental data.
If there is a large number of data, i.e., $A$ is a tall matrix, then the  distributed counterpart of these algorithms are presented in \cite[Sec.~8.2]{boyd:parikh:chu:2011:statistical_learning_ADMM}. In particular,  \cite[Eq.~8.3]{boyd:parikh:chu:2011:statistical_learning_ADMM} can be rewritten as a constrained version of \eqref{eq:unc_game}. 
The cost function is defined by
$$g(x_i,z)\coloneqq \ell_i (A_i x_i-b_i) + r(z)\,,$$
where $A_i$ and $b_i$ represent the $i$-th block of available data,and $x_i$ is  the local estimation of the parameters of the model. Here, $\ell_i$ is a loss function and $r$ is the regularization function $r(z)\coloneqq \frac{1}{2}\lVert x_i - z \rVert $. Finally, the arising game is subject to the constraint that at the equilibrium $x_i=x_j$ for all $i\in\ca N$. In Section~\ref{sec:distr_lasso_algorithm}, we describe in detailed a possible implementation of this type of algorithms.

\smallskip


\smallskip
\item {\it Constrained consensus:} if the cost function of the game in \eqref{eq:unc_game} is chosen with $f_{i}=0$ for all $i\in\ca N$, then we retrive the projected consensus algorithm studied in \cite[Eq.~3]{nedic:ozdaglar:parrillo:10} to solve the problem of constrained consensus. To achieve the convergence to the consensus,  it is  required the additional assumption that $\mathrm{int}(\cap_{i\in\ca N} \Omega_i )\not = \varnothing$, see \cite[Ass.~1]{nedic:ozdaglar:parrillo:10}.
\end{enumerate}

\medskip
Next, we introduce the concept of equilibrium of interest in this paper.
Informally, we say that a collective vector $\bar{\bld x}\in \boldsymbol \Omega :=\prod_{i=1}^N \Omega_i$ is an equilibrium of the dynamics~\eqref{eq:unc_game} of the game in \eqref{eq:unc_game_CF}, if no agent $i$ can decrease its local cost function by unilaterally changing its strategy with another feasible one. We formalize this concept in the following definition.


\smallskip
\begin{definition}[{Network equilibrium \cite[Def.~1]{grammatico:18tcns}}]\label{def:NetworkEquilibrium}
The collective vector $\bar{\boldsymbol x}= \col( (\bar{x}_i)_{i\in\ca N} )$ is a \textit{network equilibrium} (NWE) if, for every $i\in\ca N$,
\begin{equation}
\overline{x}_i =   \argmin_{y\in\bR^n} \; g_i \left(y,\textstyle\sum_{j=1}^N a_{i,j}\overline{x}_j \right)\:.
\label{eq:NetworkEquilibrium}
\end{equation}
\hfill\QEDopen
\end{definition}
\smallskip
  
We note that if there are no self loops in the adjacency matrix, i.e., $a_{i,i}=0$ for all $i \in\ca{N}$, then an NWE corresponds to a Nash equilibrium \cite[Remark 1]{grammatico:18tcns}. 

In the next section, we study the convergence of the dynamics in \eqref{eq:unc_game} to an NWE for a population of myopic agents. In particular, when the dynamics do not converge, we propose alternative ones to recover convergence.

\section{Proximal dynamics}
\label{sec:static_dynamics}

In this section, we study three different types of unconstrained proximal dynamics, namely synchronous, asynchronous and time-varying. While for the former two, we can study and prove convergence to an NWE, the last does not ensure convergence. Thus, we propose a modified version of the dynamics with the same equilibria of the original game.  

\subsection{Unconstrained dynamics}
\label{sec:unconstr_dyn}

As a first step, we rephrase the dynamics in a more compact form by means of the proximity operator. In fact, the dynamics in \eqref{eq:unc_game}, with cost function as in \eqref{eq:def_costFunction_Gi}, are equivalent to the following proximal dynamics: for all $i\in\ca N$,
\begin{equation}\label{eq:Banach_dynamics_i}
x_i(k+1)  = \textrm{prox}_{ \bar f_i}\big(\textstyle\sum_{j=1}^N a_{i,j} \, x_j(k) \big),\:   \ \forall k \in \mathbb{N}.
\end{equation}
In compact form, they read as 
\begin{equation}\label{eq:group_dynamics}
\boldsymbol x(k+1) = \boldsymbol{\prox_{f}}( \boldsymbol{A}\,\boldsymbol x(k) ) \,,
\end{equation}
where the matrix $\boldsymbol{A} := A \otimes I_n$ represents the interactions among agents, and the mapping 
$\boldsymbol{\textrm{prox}_{\boldsymbol{f}}}$ is a block-diagonal proximal operator, i.e., 
\begin{equation} \label{eq:group_proximity_operator}
\boldsymbol{\prox_{f}} \left(
\begin{bmatrix}
z_1 \\
\vdots \\
z_N
\end{bmatrix}
\right) := 
\begin{bmatrix}
\textrm{prox}_{\bar f_1}(z_1) \\
\vdots\\
\textrm{prox}_{\bar f_N}(z_N)
\end{bmatrix}.
\end{equation}


\smallskip
\begin{remark}\label{rem:NWE_as_fix}
The definition of NWE can be equivalently recast as a fixed point of the operator \eqref{eq:group_dynamics}. In fact, a collective vector $\overline{\boldsymbol{x}}$ is an NWE if and only if $\overline{\boldsymbol{x}}\in\textrm{fix}(\boldsymbol{\textrm{prox}_{\boldsymbol{f}}}\circ \boldsymbol{A})$. Under Assumptions~\ref{ass:local_constrain}~and~\ref{ass:cost_function}, $\textrm{fix}\left(\boldsymbol{\mathrm{prox}_{\boldsymbol{f}}}\circ \boldsymbol{A} \right)$ is non-empty \cite[Th.~4.1.5]{smart1980fixedpoint_theory}, i.e., there always exists an NWE of the game, thus the convergence problem is well posed. \hfill\QEDopen
\end{remark}
\smallskip


The following lemma represents the cornerstone used to prove the convergence of the dynamics in  \eqref{eq:group_dynamics} to an NWE. In particular, it shows that a row stochastic matrix $A$ is an AVG operator in the Hilbert space weighted by a diagonal matrix $Q$, whose diagonal entries are the elements of the left Perron-Frobenius (PF) eigenvector of $A$, i.e., a vector $\bar q\in\bR^{N}_{>0}$ s.t. $\bar q^\top A= \bar q^\top $. In the remainder of the paper, we always consider the  normalized PF eigenvector, i.e., $q = \bar q/\lVert \bar q \rVert$.
\smallskip


\begin{lemma}[Averageness and left PF eigenvector]\label{lem:proxA_is_AVG_in_H_Q}
Let Assumption 3 hold true, i.e, $A$ be row stochastic, $\underline{a}>0$ be its smallest diagonal element and $q = \col(q_1,\ldots,q_N)$ denotes its left PF eigenvector. Then, the following hold:
\begin{enumerate}[(i)]

\item $A$ is $\eta$-AVG in $\ca H_{ Q}$, with  $Q\coloneqq\diag (q_1, \ldots,q_N)$ and $\eta \in (0,1-\underline{a})$;

\item The operator $ \boldsymbol{\mathrm{prox}_{ \boldsymbol{f}}}\circ \boldsymbol{A} $ is  $\frac{1}{2-\eta}$--AVG in $\ca H_{ Q}$ 

\end{enumerate}

If $A$ is doubly-stochastic, the  statements  hold with $Q=I$.

  \quad\hfill $\square$
\end{lemma}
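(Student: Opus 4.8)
The Kronecker factor $I_n$ in $\boldsymbol A=A\otimes I_n$ decouples the $n$ coordinates, so the plan is to prove (i) for $A$ viewed as a linear operator on the $Q$-weighted space $\mathcal H_Q$, with $Q=\diag(q_1,\dots,q_N)$, and then lift it to $\boldsymbol A$ on $\mathcal H_{Q\otimes I_n}$ by repeating the same estimate block-wise. The whole argument for (i) rests on one decomposition: set $\eta:=1-\underline a$ and $R:=\tfrac{1}{1-\underline a}\,(A-\underline a\,I)$, so that $A=(1-\eta)\,\Id+\eta\,R$. Since $a_{i,i}\ge\underline a$ for all $i$ and the off-diagonal entries of $A$ are nonnegative, $R$ has nonnegative entries and row sums $(1-\underline a)/(1-\underline a)=1$; hence $R$ is again row stochastic. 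Moreover $q^\top R=\tfrac{1}{1-\underline a}\big(q^\top A-\underline a\,q^\top\big)=q^\top$, so $R$ inherits the left PF eigenvector $q$ of $A$, and by Perron--Frobenius (using strong connectivity) $q>0$, so $Q\succ0$ and $\mathcal H_Q$ is well defined.

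The crux will be to show that $R$ is nonexpansive in $\mathcal H_Q$. I would apply Jensen's inequality on each row to the probability weights $(R_{ij})_{j}$: for every $z\in\bR^{N}$,
\[
\lVert Rz\rVert_Q^2=\sum_{i=1}^N q_i\Big(\sum_{j=1}^N R_{ij}z_j\Big)^{\!2}\;\le\;\sum_{i=1}^N q_i\sum_{j=1}^N R_{ij}z_j^2=\sum_{j=1}^N\Big(\sum_{i=1}^N q_iR_{ij}\Big)z_j^2=\sum_{j=1}^N q_jz_j^2=\lVert z\rVert_Q^2,
\]
the last step using $q^\top R=q^\top$. Thus $A$ is a convex combination of $\Id$ and the nonexpansive operator $R$, which is exactly the defining property of an $\eta$-AVG operator \cite{bauschke:combettes}; this establishes that $A$ (hence $\boldsymbol A$, by the block-wise version of the same computation) is $\eta$-AVG in $\mathcal H_Q$ (resp.\ $\mathcal H_{Q\otimes I_n}$) with $\eta=1-\underline a$, which is (i).

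For (ii) the plan is, first, to observe that $\boldsymbol{\mathrm{prox}_{\boldsymbol f}}$ is firmly nonexpansive --- i.e.\ $\tfrac12$-AVG --- in $\mathcal H_{Q\otimes I_n}$: each block $\mathrm{prox}_{\bar f_i}$ is FNE in the Euclidean $\bR^n$ because $\bar f_i$ is proper, l.s.c.\ and convex \cite{bauschke:combettes}, and multiplying the $i$-th FNE inequality by $q_i>0$ and summing is legitimate precisely because $Q\otimes I_n$ is block diagonal with factor $q_i$ on the $i$-th block. Second, I would invoke the averaged-operator composition calculus \cite{bauschke:combettes}: the composition of an $\alpha_1$-AVG with an $\alpha_2$-AVG operator is $\tfrac{\alpha_1+\alpha_2-2\alpha_1\alpha_2}{1-\alpha_1\alpha_2}$-AVG in the same space, and with $\alpha_1=\tfrac12$, $\alpha_2=\eta$ this collapses to $\tfrac{1}{2-\eta}$, giving (ii). Finally, if $A$ is doubly stochastic then $\1$ is also a left eigenvector, so $q$ is a positive multiple of $\1$, $Q$ is a positive multiple of $I$, and since averagedness is invariant under rescaling the inner product by a positive constant, both statements hold verbatim with $Q=I$.

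\textbf{Expected main obstacle.} The only non-mechanical point is choosing the right geometry: in the plain Euclidean norm a merely row-stochastic $A$ need not be averaged (nor even nonexpansive), and one must weight by the left PF eigenvector. Once one writes $A=\underline a\,\Id+(1-\underline a)R$ and notices that $R$ is again row stochastic with the \emph{same} left eigenvector $q$, the nonexpansiveness of $R$ follows from the one-line Jensen estimate above, and the remainder is the standard averaged-operator calculus.
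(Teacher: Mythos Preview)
Your proposal is correct and follows essentially the same route as the paper: the paper also writes $A=(1-\eta)\,\Id+\eta B$ with $B$ nonnegative and row stochastic, shows $B$ is nonexpansive in $\ca H_Q$ via an algebraic expansion that is exactly your Jensen estimate (packaged there as a separate auxiliary lemma), and then invokes the same averaged-composition rule for (ii) together with the same block-diagonal argument for the firm nonexpansiveness of $\boldsymbol{\mathrm{prox}_{\boldsymbol f}}$. The only cosmetic difference is that the paper phrases the decomposition with a generic $\eta\ge 1-\underline a$ rather than fixing $\eta=1-\underline a$.
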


\smallskip 
\begin{proof}
See Appendix~\ref{app:proof_th_1}. 
\end{proof}
\smallskip


Now, we are ready to present the first result, namely, the global convergence of the proximal dynamics in \eqref{eq:group_dynamics} to an NWE of the game in \eqref{eq:unc_game_CF}. 

\smallskip
\begin{theorem}[Convergence of proximal dynamics]
\label{th:convergence_synch_unc}
For any $\boldsymbol x(0) \in \bold \Omega $, the sequence $\left( \boldsymbol x(k) \right)_{k\in\bN}$ generated by the proximal dynamics in \eqref{eq:group_dynamics} converges to an NWE of \eqref{eq:unc_game_CF}.
\hfill\QEDopen
\end{theorem}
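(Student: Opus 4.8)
The plan is to combine Lemma~\ref{lem:proxA_is_AVG_in_H_Q} with the Krasnoselskii--Mann / averaged-operator convergence theorem for fixed-point iterations. The dynamics \eqref{eq:group_dynamics} are precisely the Picard iteration $\boldsymbol x(k+1) = \ca T(\boldsymbol x(k))$ of the operator $\ca T := \boldsymbol{\prox_{f}}\circ\boldsymbol A$. By Lemma~\ref{lem:proxA_is_AVG_in_H_Q}(ii), $\ca T$ is $\tfrac{1}{2-\eta}$-averaged in the Hilbert space $\ca H_{\boldsymbol Q}$, where $\boldsymbol Q := Q\otimes I_n$ with $Q=\diag(q_1,\dots,q_N)$ built from the left PF eigenvector $q$; since $q\in\bR^N_{>0}$, the weight $\boldsymbol Q$ is symmetric positive definite, so $\ca H_{\boldsymbol Q}$ is a genuine (finite-dimensional) Hilbert space with norm equivalent to the Euclidean one. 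First I would record that $\fix(\ca T)\neq\varnothing$ (Remark~\ref{rem:NWE_as_fix}) and that, by Definition~\ref{def:NetworkEquilibrium} and Remark~\ref{rem:NWE_as_fix}, the fixed-point set of $\ca T$ coincides with the set of NWE of \eqref{eq:unc_game_CF}. Then I would invoke the standard result (e.g.\ \cite[Th.~5.15]{bauschke:combettes}): if $\ca T$ is averaged on a Hilbert space and $\fix(\ca T)\neq\varnothing$, the sequence $(\boldsymbol x(k))_{k\in\bN}$ generated by $\boldsymbol x(k+1)=\ca T(\boldsymbol x(k))$ converges to a point in $\fix(\ca T)$. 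In finite dimension weak and strong convergence coincide, so this yields convergence of $(\boldsymbol x(k))$ to some $\bar{\boldsymbol x}\in\fix(\ca T)$, i.e.\ to an NWE.

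The remaining point to check is that the iteration is well-defined and stays in $\boldsymbol\Omega$, which matches the hypothesis $\boldsymbol x(0)\in\boldsymbol\Omega$. This follows because each block $\prox_{\bar f_i}$ maps $\bR^n$ into $\dom(\bar f_i)=\Omega_i$ (the indicator $\iota_{\Omega_i}$ forces the proximal minimizer to lie in $\Omega_i$), so $\ca T(\bR^{Nn})\subseteq\boldsymbol\Omega$ and, a fortiori, $\boldsymbol x(k)\in\boldsymbol\Omega$ for all $k\geq 1$. Strict convexity of $g_i$ (Standing Assumption~\ref{ass:cost_function}) guarantees the argmin in \eqref{eq:unc_game} is a singleton, so $\ca T$ is single-valued and the dynamics are unambiguous. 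I would state these observations briefly before applying the convergence theorem.

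I do not anticipate a serious obstacle here: the substance of the argument has been pushed into Lemma~\ref{lem:proxA_is_AVG_in_H_Q}, whose proof is deferred to the appendix. The only mild subtlety worth flagging explicitly is that averagedness, and hence the convergence conclusion, holds in the \emph{weighted} space $\ca H_{\boldsymbol Q}$ rather than the standard Euclidean one; one must note that $\boldsymbol A$ need not be averaged (or even nonexpansive) in $\ca H_I$ when $A$ is merely row stochastic, which is exactly why the PF-weighted inner product is needed. Since $\boldsymbol Q\succ 0$, convergence in $\|\cdot\|_{\boldsymbol Q}$ is equivalent to convergence in $\|\cdot\|$, so the limit is the same; I would make this equivalence explicit in one line so the statement of Theorem~\ref{th:convergence_synch_unc} (phrased without reference to any weight) is justified. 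No other estimates are required.
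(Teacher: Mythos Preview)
Your proposal is correct and follows essentially the same route as the paper: invoke Lemma~\ref{lem:proxA_is_AVG_in_H_Q}(ii) to obtain averagedness of $\boldsymbol{\prox_{f}}\circ\boldsymbol A$ in $\ca H_{\boldsymbol Q}$, then apply the standard Banach--Picard convergence result for averaged operators (the paper cites \cite[Prop.~5.14]{Bauschke2010:ConvexOptimization}, you cite the closely related \cite[Th.~5.15]{bauschke:combettes}). Your additional remarks on nonemptiness of $\fix(\ca T)$, well-definedness, invariance of $\boldsymbol\Omega$, and equivalence of the $\boldsymbol Q$-norm with the Euclidean norm are all correct and make explicit what the paper leaves implicit.
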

\begin{proof}
See Appendix~\ref{app:proof_th_1}.
\end{proof}

\smallskip
\begin{remark} \label{rem:diagonal}
Theorem \ref{th:convergence_synch_unc} extends \cite[Th. 1]{grammatico:18tcns}, where the matrix $A$ is assumed doubly-stochastic \cite[Ass.~1~Prop.~2]{grammatico:18tcns}.  In this case, $\1_N$ is the left PF eigenvector of $A$ and the matrix $Q$ can be set as the identity matrix, see Lemma~\ref{lem:proxA_is_AVG_in_H_Q}.
 \hfill\QEDopen

\end{remark}
\smallskip 
   
%
\begin{remark}
In \cite[Sec.~VIII-A.]{grammatico:18tcns} the authors study, via simulations, an application of the game in \eqref{eq:unc_game_CF} to opinion dynamics. In particular, they conjecture the convergence of the dynamics in the case of a row stochastic weighted adjacency matrix.
Theorem~\ref{th:convergence_synch_unc} theoretically supports the  convergence of this class of dynamics. \hfill\QEDopen 
\end{remark}
\smallskip 

The presence of self-loops in the communication (Assumption~\ref{ass:row_stoch}) is critical for the convergence of the dynamics in \eqref{eq:group_dynamics}. Next, we present a simple example of a two player game, in which the dynamics fail to converge due to the lack of self-loops.
\smallskip

\begin{example} \label{ex:anti_coord_matrix}
Consider the two player game, in which the state of each agent is scalar and $\bar f_i\coloneqq \iota_{\Omega}$, for $i=1,2$. The set $\Omega$ is an arbitrarily big compact subset of $\bR$, in which the dynamics of the agents are invariant. The communication network is described by the doubly-stochastic adjacency matrix $A=\left[\begin{smallmatrix}
0 & 1\\
1 & 0 \end{smallmatrix}\right]$, defining a strongly connected graph without self-loops. In this example, the dynamics in \eqref{eq:group_dynamics} reduce to $\bld x(k+1) = A\bld x(k)$. Hence, convergence does not take place for all $\bld x(0)\in\bld\Omega$.\hfill\QEDopen 
\end{example}
\smallskip


If $\underline a = 0$, i.e., the self-loop requirement  in Standing Assumption~\ref{ass:row_stoch} is not satisfied, then the convergence can be restored by relaxing the dynamics in \eqref{eq:unc_game} via the so-called Krasnoselskii iteration \cite[Sec.~5.2]{Bauschke2010:ConvexOptimization},
\begin{equation}\label{eq:krasno_group_dynamics}
\boldsymbol x(k+1) = (1-\alpha)\boldsymbol x(k)+\alpha\, \boldsymbol{\textrm{prox}}_{\boldsymbol{f}}( \boldsymbol{A}\,\boldsymbol x(k) ) \,,
\end{equation}
where $\alpha\in(0,1)$.
These new dynamics share the same fixed points of \eqref{eq:group_dynamics}, that correspond to NWE of the original game in \eqref{eq:unc_game_CF}.
\smallskip
\begin{corollary}
\label{cor:krasno_conv}
For any $\boldsymbol x(0) \in \bold \Omega$ and for $\min_{i\in\ca N} a_{i,i}\geq 0$, the sequence $\left( \boldsymbol x(k) \right)_{k\in\bN}$ generated by the dynamics in \eqref{eq:krasno_group_dynamics} converges to an NWE of \eqref{eq:unc_game}.
\hfill\QEDopen
\end{corollary}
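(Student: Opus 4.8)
The plan is to reduce the claim to the classical Krasnoselskii–Mann convergence theorem for averaged operators in a Hilbert space. First I would observe that when the self-loop requirement is dropped, i.e. $\underline a \geq 0$ is allowed to vanish, Lemma~\ref{lem:proxA_is_AVG_in_H_Q}(i) still guarantees that $A$ is nonexpansive in $\ca H_Q$ (take $\eta \to 1^-$, or simply note that a row-stochastic matrix is $\lVert\cdot\rVert_Q$-nonexpansive since $q^\top A = q^\top$ together with convexity of $\lVert\cdot\rVert^2$ gives $\lVert A x\rVert_Q^2 \le \lVert x\rVert_Q^2$). Consequently, as in Lemma~\ref{lem:proxA_is_AVG_in_H_Q}(ii), the composition $\boldsymbol{\mathrm{prox}_{\boldsymbol f}}\circ\boldsymbol A$ is nonexpansive in $\ca H_{\boldsymbol Q}$ with $\boldsymbol Q := Q\otimes I_n$, being the composition of the firmly nonexpansive operator $\boldsymbol{\mathrm{prox}_{\boldsymbol f}}$ (which is FNE in $\ca H_{\boldsymbol Q}$ because $\boldsymbol Q$ is block-diagonal and $\boldsymbol{\mathrm{prox}_{\boldsymbol f}}$ is block-diagonal and FNE blockwise in the standard inner product) with the nonexpansive $\boldsymbol A$. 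Here I still need the left PF eigenvector $q$ to exist and be strictly positive, which holds because the graph is strongly connected (that part of Standing Assumption~\ref{ass:row_stoch} is retained).

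Next I would verify the hypotheses of Krasnoselskii–Mann: the iteration \eqref{eq:krasno_group_dynamics} is exactly $\boldsymbol x(k+1) = (1-\alpha)\boldsymbol x(k) + \alpha\, \boldsymbol T \boldsymbol x(k)$ with $\boldsymbol T := \boldsymbol{\mathrm{prox}_{\boldsymbol f}}\circ\boldsymbol A$ nonexpansive in $\ca H_{\boldsymbol Q}$, and a constant relaxation parameter $\alpha\in(0,1)$, so $\sum_k \alpha(1-\alpha) = \infty$. The fixed point set $\fix(\boldsymbol T)$ is nonempty: it coincides with the NWE set, which is nonempty by Remark~\ref{rem:NWE_as_fix} (Brouwer's theorem applied to the continuous self-map of the compact convex set $\boldsymbol\Omega$; note $\boldsymbol T(\boldsymbol\Omega)\subseteq\boldsymbol\Omega$ since each $\mathrm{prox}_{\bar f_i}$ maps into $\Omega_i$ and $A$ is row-stochastic, hence $\boldsymbol A$ maps $\boldsymbol\Omega$ into itself). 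It is also immediate that the relaxed map and $\boldsymbol T$ have the same fixed points. Then [Bauschke–Combettes, Th.~5.15] (applied in the Hilbert space $\ca H_{\boldsymbol Q}$, which is admissible because $\boldsymbol Q \succ 0$ so the weighted norm is equivalent to the Euclidean one and the space is complete) yields that $(\boldsymbol x(k))_{k\in\bN}$ converges to some point of $\fix(\boldsymbol T)$, i.e. to an NWE of \eqref{eq:unc_game_CF}. Finally, invariance of $\boldsymbol\Omega$ under the relaxed iteration follows from convexity of $\boldsymbol\Omega$ since $\boldsymbol x(k+1)$ is a convex combination of $\boldsymbol x(k)\in\boldsymbol\Omega$ and $\boldsymbol T\boldsymbol x(k)\in\boldsymbol\Omega$, so the sequence is well-defined and stays feasible.

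The only genuinely delicate point is the averagedness/nonexpansiveness bookkeeping in the \emph{weighted} space $\ca H_{\boldsymbol Q}$: one must check that the block-diagonal proximal operator remains firmly nonexpansive under the weighting $\boldsymbol Q = Q\otimes I_n$. This works precisely because $\boldsymbol Q$ is block-diagonal with the $i$-th block equal to $q_i I_n$, so $\langle\cdot\mid\cdot\rangle_{\boldsymbol Q}$ decouples across agents as $\sum_i q_i\langle\cdot\mid\cdot\rangle$, and scaling each agent's inner product by the positive constant $q_i$ preserves firm nonexpansiveness of $\mathrm{prox}_{\bar f_i}$. This is the same mechanism already used in the proof of Lemma~\ref{lem:proxA_is_AVG_in_H_Q}(ii), so in practice the corollary is obtained by rerunning that argument with $\eta=1$ and then invoking the Krasnoselskii–Mann theorem instead of the Banach/averaged-operator fixed-point iteration; no new estimate is required.
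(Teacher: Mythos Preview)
Your proposal is correct and follows essentially the same route as the paper: establish that $\boldsymbol A$ is nonexpansive in $\ca H_{\boldsymbol Q}$ (the paper invokes Lemma~\ref{lemma:diagonal_Q_matrix} directly, you take $\eta\to 1$ in Lemma~\ref{lem:proxA_is_AVG_in_H_Q}(i), which amounts to the same thing), compose with the FNE block-diagonal proximal to get a nonexpansive operator, and apply the Krasnoselskii--Mann theorem \cite[Prop.~5.15]{Bauschke2010:ConvexOptimization}. Your write-up is more explicit about the bookkeeping (nonemptiness of $\fix(\boldsymbol T)$, invariance of $\boldsymbol\Omega$, firm nonexpansiveness surviving the $\boldsymbol Q$-weighting), but no new idea is involved.
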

\smallskip
\begin{proof}
See Appendix \ref{app:proof_th_1}.
\end{proof}

 
\subsection{Asynchronous unconstrained dynamics}
\label{sec:asynchrnous_unconstr}
The dynamics introduced in \eqref{eq:group_dynamics} assume that all the agents update their strategy synchronously. Here, we study the more realistic case in which the agents behave asynchronously, namely, perform their local updates at different time instants. 
Thus, at each time instant $k$, only one agent $i_k\in\ca N$ updates its state according to \eqref{eq:Banach_dynamics_i}, while the others do not, i.e.,
\begin{equation}
\label{eq:dynamics_asynch_i}
x_i(k+1)=\begin{cases}
\textrm{prox}_{\bar f_i}\big(\textstyle\sum_{j=1}^N a_{i,j} x_j(k) \big), & \text{if } i = i_k, \\
x_i(k), & \text{otherwise}.
\end{cases}
\end{equation} 

Next, we derive a compact form for the dynamics above. 
Define  $H_i$  as the matrix of all zeros except for $[H_i]_{ii}=1$, and also $\bld H_i := H_i\otimes I_n $. Then, we define the set $\bld H:=\{\bld H_i\}_{i\in\ca N}$ as a collection of these $N$ matrices.
  At each time instant $k\in\bN$, the choice of an agent which performs the update is modelled via an i.i.d. random variable $\zeta_k$, taking values in $\bld H$. If $\zeta_k=\bld H_i$, it means that agent $i$ updates at time $k$, while the others do not change their strategies. Given a discrete probability distribution $(p_1,\dots,p_N)$, we define $\mathbb{P}[\zeta_k= \bld H_i] = p_i$, for all $i\in\ca N$. With this notation in mind, the dynamics in \eqref{eq:group_dynamics} are modified to model asynchronous updates, 
\begin{equation}\label{eq:dynamics_ARock_unc_mid}
\boldsymbol x(k+1) = \boldsymbol x(k) +  \zeta_k  \big( \boldsymbol{\textrm{prox}}_{\boldsymbol{f}}( \boldsymbol{A}\,\boldsymbol x(k) ) - \boldsymbol x(k)\big) \,.
\end{equation}
We remark that \eqref{eq:dynamics_ARock_unc_mid} represent the natural asynchronous counterpart of the ones in \eqref{eq:group_dynamics}. In fact, the update above is equivalent to the one in \eqref{eq:Banach_dynamics_i} for the active agent at time $k\in \bN$. 


Hereafter, we assume that each agent $i\in\ca N$ has a public and private memory. If the player is not performing an update, the strategies stored in the two memories coincide. During an update, instead, the public memory stores the strategy of the agent before the update has started, while in the private one there is the value that is modified during the computations. 
When the update is completed, the  value in the public memory is overwritten by the private one. This assumption ensures that all the reads of the public memory of each agent $i$, performed by each neighbor $j\in\ca N_i$, are always consistent, see \cite[Sec.~1.2]{Peng-Yan-Xu:2016:ARock} for technical details.

We consider the case in which the computation time for the update is not negligible, therefore the strategies that agent $i$ reads from each neighbor $j\in\ca N_i$ can be outdated of $\varphi_j(k)\in\bN$ time instants. 
 The maximum delay is assumed uniformly upper bounded.
\smallskip
\begin{assumption}[Bounded maximum delay]
\label{ass:bounded_delays}
The delays are uniformly upper bounded, i.e., $\sup_{k\in\bN}\max_{i\in \ca N} \varphi_i(k) \leq \overline{\varphi}<\infty$, for some $\overline{\varphi}\in\bN$. \hfill\QEDopen
\end{assumption} 
\smallskip

The dynamics describing the asynchronous update with delays can be then cast in a compact form as
\begin{equation}\label{eq:dynamics_ARock_unc}
\boldsymbol x(k+1) = \boldsymbol x(k) + \zeta_k  \big( \boldsymbol{\textrm{prox}}_{\boldsymbol{f}}( \boldsymbol{A}\,\hat{\boldsymbol x}(k) ) - \hat{\boldsymbol x}(k)\big) \,,
\end{equation}      
where $\hat{\bld x} = \col(\hat{x}_1,\ldots,\hat{x}_N)$ is the vector of possibly delayed strategies. Note that each agent $i$ has always access to the updated value of its strategy, i.e., $\hat x_i = x_i$, for all $i\in\ca N$. 
We stress that the dynamics in \eqref{eq:dynamics_ARock_unc} coincides with \eqref{eq:dynamics_ARock_unc_mid} when no delay is present, i.e., if $\overline{\varphi}=0$. 

The following theorem claims the convergence (in probability) of \eqref{eq:dynamics_ARock_unc} to an NWE of the game in \eqref{eq:unc_game_CF}, when the maximum delay $\overline{\varphi}$ is small enough.
\smallskip
\begin{theorem}[Convergence of asynchronous dynamics]\label{th:convergence_asynch_psi1}
Let Assumption~\ref{ass:bounded_delays} hold true, $p_{\min} := \min_{i\in\ca N} p_i$ and
\begin{equation}
\label{eq:max_delay_bound}
\textstyle{\overline{\varphi} < \frac{N\sqrt{p_{\min}}}{2(1-\underline{a})}-\frac{1}{2\sqrt{p_{\min}}}}\: .
\end{equation}
Then, for any $\boldsymbol x(0) \in \bold \Omega $, the sequence $( \bld x (k) )_{k\in\bN}$ generated by \eqref{eq:dynamics_ARock_unc} converges almost surely to some $\bar{ \boldsymbol x}\in\fix(\bld{\textrm{prox}_f}\circ \bld A)$, namely, an NWE of the game in \eqref{eq:unc_game_CF}.
\hfill\QEDopen   
\end{theorem}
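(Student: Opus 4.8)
The plan is to cast the asynchronous delayed iteration \eqref{eq:dynamics_ARock_unc} as an instance of the ARock framework \cite{Peng-Yan-Xu:2016:ARock} applied to the operator $\boldsymbol{T} := \boldsymbol{\mathrm{prox}_f}\circ\boldsymbol{A}$, and then invoke the known almost-sure convergence theorem for ARock iterations of nonexpansive operators with bounded delays, subject to a step-size/delay condition. The essential structural fact needed is that $\boldsymbol{T}$ is nonexpansive in a suitable Hilbert space, and here Lemma~\ref{lem:proxA_is_AVG_in_H_Q}(ii) does the work: $\boldsymbol{T}$ is $\tfrac{1}{2-\eta}$-averaged in $\ca H_{\boldsymbol Q}$ with $\boldsymbol Q := Q\otimes I_n$ and $\eta\in(0,1-\underline a)$. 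In particular $\boldsymbol{T}$ is $\lambda$-averaged for any $\lambda > \tfrac{1}{2-\eta}$, and writing $\boldsymbol{T} = (1-c)\Id + c\,\boldsymbol R$ with $\boldsymbol R$ nonexpansive and $c = \tfrac{1}{2-\eta}$, the iteration \eqref{eq:dynamics_ARock_unc} becomes a coordinate-block (agent-block) stochastic Krasnoselskii–Mann iteration for $\boldsymbol R$ with an inherent step $c$ and i.i.d.\ block selection $\zeta_k$ with $\mathbb P[\zeta_k = \boldsymbol H_i] = p_i$.

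\textbf{Key steps, in order.} First, I would verify the weighting is compatible with the block-diagonal structure: since $\boldsymbol Q = Q\otimes I_n$ is block diagonal with blocks $q_i I_n$, and the selection matrices $\boldsymbol H_i$ are coordinate projections onto the $i$-th block, the ARock convergence analysis carries over verbatim to $\ca H_{\boldsymbol Q}$ — the $\boldsymbol Q$-weighted norm is separable over agents, so the Lyapunov/energy function used in \cite{Peng-Yan-Xu:2016:ARock} (a combination of $\lVert \boldsymbol x(k)-\bar{\boldsymbol x}\rVert^2_{\boldsymbol Q}$ and a weighted sum of the past increments $\lVert \boldsymbol x(j+1)-\boldsymbol x(j)\rVert^2_{\boldsymbol Q}$ over $j\in\{k-\overline\varphi,\dots,k-1\}$) is well defined. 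Second, I would compute the relevant ARock constants for our operator: the averagedness parameter is $c=\tfrac1{2-\eta}$, so $\tfrac1c - 1 = 1-\eta$; since $\eta$ may be taken arbitrarily close to $1-\underline a$, this quantity can be taken arbitrarily close to $\underline a$, equivalently $\tfrac{1-c}{c}$ close to $\underline a$, and it is the interplay of this with the number of blocks $N$, the minimal probability $p_{\min}$, and the delay $\overline\varphi$ that produces the bound \eqref{eq:max_delay_bound}. Third, I would apply the ARock theorem: under Assumption~\ref{ass:bounded_delays} and the delay bound \eqref{eq:max_delay_bound}, the sequence $(\boldsymbol x(k))_{k\in\bN}$ converges almost surely to a point in $\fix(\boldsymbol R) = \fix(\boldsymbol T)$. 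Fourth, I would translate the conclusion: by Remark~\ref{rem:NWE_as_fix}, $\fix(\boldsymbol T) = \fix(\boldsymbol{\mathrm{prox}_f}\circ\boldsymbol A)$ is exactly the set of NWE of \eqref{eq:unc_game_CF}, and it is nonempty by the discussion there, so the limit is an NWE. Finally I would note that $\boldsymbol x(0)\in\boldsymbol\Omega$ together with the fact that each $\mathrm{prox}_{\bar f_i}$ maps into $\mathrm{dom}(\bar f_i)\subseteq\Omega_i$ keeps the whole trajectory in $\boldsymbol\Omega$, so the compactness Standing Assumption~\ref{ass:local_constrain} guarantees boundedness and legitimizes all the limiting arguments.

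\textbf{The main obstacle} is matching constants: showing that the abstract ARock step-size condition — typically of the form $c < \tfrac{p_{\min}}{\,\ldots\,}$ involving $\overline\varphi$ and $N$, or equivalently a quadratic inequality in $\overline\varphi$ — is implied by (in fact equivalent to) the explicit bound \eqref{eq:max_delay_bound} once we substitute $c = \tfrac1{2-\eta}$ and let $\eta\uparrow 1-\underline a$. Concretely, one rearranges the ARock inequality into $2\overline\varphi\sqrt{p_{\min}}\,(1-\underline a) + \sqrt{p_{\min}}^{\,-1}\cdot(\text{something}) < N\,p_{\min}^{?}$ and checks it reduces to \eqref{eq:max_delay_bound}; this is a careful but routine manipulation, and the only genuinely delicate point is confirming that the supremum over admissible $\eta$ is achieved in the limit (so the strict inequality \eqref{eq:max_delay_bound} is the right threshold and not off by an endpoint). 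A secondary, minor obstacle is making fully rigorous that the ARock results, usually stated in a plain Euclidean coordinate setting, transfer to the $\boldsymbol Q$-weighted inner product; this is immediate because $\boldsymbol Q$ is positive-definite and block-diagonal with the agent-block partition, so $\ca H_{\boldsymbol Q}$ is isometric, blockwise, to a rescaled Euclidean space on which the block-selection structure is preserved.
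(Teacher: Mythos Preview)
Your plan is essentially the paper's own argument: decompose $T=\boldsymbol{\mathrm{prox}_f}\circ\boldsymbol A$ as $(1-c)\Id+c\,\overline T$ with $\overline T$ nonexpansive in $\ca H_{\boldsymbol Q}$, and then invoke the ARock result \cite[Lem.~13--14]{Peng-Yan-Xu:2016:ARock}. The paper organizes it slightly differently: it first proves Theorem~\ref{th:convergence_asynch} (the $\psi_k$-scaled dynamics \eqref{eq:dynamics_ARock_unc_psi}) and then obtains Theorem~\ref{th:convergence_asynch_psi1} as the special case $\psi_k\equiv 1$, checking that $\tfrac{Np_{\min}}{(2\overline\varphi\sqrt{p_{\min}}+1)(1-\underline a)}>1$ rearranges to \eqref{eq:max_delay_bound}.

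The place where your outline does not close is precisely the constant-matching step you flag. With $c=\tfrac{1}{2-\eta}$ from Lemma~\ref{lem:proxA_is_AVG_in_H_Q}(ii) and $\eta\to 1-\underline a$, you get $c\to\tfrac{1}{1+\underline a}$; plugging this into the ARock step condition $c<\tfrac{Np_{\min}}{2\overline\varphi\sqrt{p_{\min}}+1}$ and solving for $\overline\varphi$ yields
\[
\overline\varphi<\tfrac{N\sqrt{p_{\min}}(1+\underline a)}{2}-\tfrac{1}{2\sqrt{p_{\min}}},
\]
which is strictly \emph{weaker} than \eqref{eq:max_delay_bound} (since $\tfrac{1}{1-\underline a}>1+\underline a$). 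The paper's proof instead writes $T=(1-\eta)\Id+\eta\,\overline T$ with $\eta\in(1-\underline a,1)$, i.e.\ it takes the averagedness constant of $T$ itself (not of $\boldsymbol A$) to approach $1-\underline a$, and then the effective ARock step in \eqref{eq:dyn_asynch_proof} is $\psi_k\eta$, leading exactly to \eqref{eq:max_delay_bound}. So your ``routine manipulation'' will not land on the stated bound if you stay with the composition constant $\tfrac1{2-\eta}$; you must use the paper's sharper averagedness constant for $T$ (note that $\tfrac{1}{1+\underline a}>1-\underline a$, so this is a genuinely stronger claim than what the composition formula in Lemma~\ref{lem:proxA_is_AVG_in_H_Q}(ii) produces).
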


\smallskip 
\begin{proof}
See Appendix~\ref{app:proof_th_asynch_un}.
\end{proof}
\smallskip
%
If the maximum delay does not satisfy~\eqref{eq:max_delay_bound}, then the convergence of the dynamics in \eqref{eq:dynamics_ARock_unc} is not guaranteed.
In this case, convergence can be restored by introducing a time-varying scaling factor $\psi_k$ in the dynamics: 
\begin{equation}\label{eq:dynamics_ARock_unc_psi}
\boldsymbol x(k+1) = \boldsymbol x(k) + \psi_k \zeta_k  \big( \boldsymbol{\textrm{prox}}_{\boldsymbol{f}}( \boldsymbol{A}\,\hat{\boldsymbol x}(k) ) - \hat{\boldsymbol x}(k)\big) \,.
\end{equation} 
The introduction of this scaling factor, always smaller than $1$, leads to a slower convergence of \eqref{eq:dynamics_ARock_unc_psi} with respect to \eqref{eq:dynamics_ARock_unc}, since it implies a smaller step size in update. 
The next theorem proves that, the modified dynamics converges if the scaling factor is chosen small enough.
 
\smallskip
\begin{theorem}
\label{th:convergence_asynch}
Let Assumption~\ref{ass:bounded_delays} hold true and set
\begin{align*} \textstyle
0<\psi_k<\frac{Np_{\min}}{(2\overline \varphi \sqrt{p_{\min}} +1) (1-\underline{a})}, \quad \forall k \in \mathbb N.
\end{align*}
Then, for any $\boldsymbol x(0) \in \bld \Omega $, the sequence $( \bld x (k) )_{k\in\bN}$ generated by \eqref{eq:dynamics_ARock_unc_psi} converges almost surely  to some $\bar{\bld x} \in \fix(\bld{\textrm{prox}_f}\circ \bld A)$, namely, an NWE of the game in  \eqref{eq:unc_game_CF}.
\hfill\QEDopen   
\end{theorem}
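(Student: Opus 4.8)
The plan is to read \eqref{eq:dynamics_ARock_unc_psi} as a randomized, asynchronous, delayed Krasnoselskii--Mann iteration for a nonexpansive operator and to run a stochastic Fej\'er/supermartingale argument in the weighted Hilbert space $\ca H_Q$ supplied by Lemma~\ref{lem:proxA_is_AVG_in_H_Q}. First I would set $T:=\bld{\textrm{prox}_f}\circ\bld A$ and $S:=\Id-T$, so that by Remark~\ref{rem:NWE_as_fix} the NWE of \eqref{eq:unc_game_CF} are exactly the elements of $\fix(T)=\zer(S)\neq\varnothing$, and \eqref{eq:dynamics_ARock_unc_psi} reads $\bld x(k+1)=\bld x(k)-\psi_k\zeta_k\,S(\hat{\bld x}(k))$: at each step a single randomly chosen coordinate block of $S$, evaluated at possibly outdated data $\hat{\bld x}(k)$, is updated. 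Lemma~\ref{lem:proxA_is_AVG_in_H_Q} provides the only feature of $T$ I need, namely that $T$ is averaged in $\ca H_Q$ with a modulus controlled by $\underline a$ (equivalently, $S$ satisfies, at every fixed point $\bar{\bld x}$, a cocoercivity estimate $\langle\bld x-\bar{\bld x}\,|\,S(\bld x)\rangle_Q\ge \tfrac{2-\eta}{2}\lVert S(\bld x)\rVert_Q^2$ with $\eta<1-\underline a$).

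Second, and this is the technical core, I would introduce the stochastic Lyapunov function mixing the squared $Q$-distance to a fixed point with a decaying tail of the recent increments,
\[
\Xi(k):=\lVert\bld x(k)-\bar{\bld x}\rVert_Q^2+\textstyle\sum_{d=1}^{\overline\varphi}c_d\,\lVert\bld x(k-d)-\bld x(k-d-1)\rVert_Q^2,
\]
with weights $c_d>0$ to be tuned. Conditioning on the natural filtration $\ca F_k$ up to time $k$ and taking the expectation over $\zeta_k$ (which equals $\bld H_i$ with probability $p_i$) expands $\mathbb E[\lVert\bld x(k+1)-\bar{\bld x}\rVert_Q^2\mid\ca F_k]$ into the contraction term $\lVert\bld x(k)-\bar{\bld x}\rVert_Q^2$, a linear term in $\langle\hat{\bld x}(k)-\bar{\bld x}\,|\,S(\hat{\bld x}(k))\rangle_Q$ with coefficient $\propto\psi_k/N$, a quadratic term $\propto(\psi_k^2/N)\lVert S(\hat{\bld x}(k))\rVert_Q^2$, and a cross term produced by the mismatch $\hat{\bld x}(k)-\bld x(k)$. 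The cocoercivity estimate turns the linear term into a genuinely negative multiple of $\lVert S(\hat{\bld x}(k))\rVert_Q^2$; the mismatch $\hat{\bld x}(k)-\bld x(k)$ is written, via Assumption~\ref{ass:bounded_delays}, as a sum of at most $\overline\varphi$ past increments and is absorbed by Young's inequality into the $c_d$-terms. Tuning the $c_d$ so that the increment terms telescope and then imposing $0<\psi_k<\tfrac{Np_{\min}}{(2\overline\varphi\sqrt{p_{\min}}+1)(1-\underline a)}$ makes the conditional one-step increment of $\Xi$ bounded above by $-\rho_k\lVert S(\hat{\bld x}(k))\rVert_Q^2$ for some $\rho_k>0$; hence $(\Xi(k))_k$ is a nonnegative supermartingale.

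Third, by the Robbins--Siegmund supermartingale convergence theorem, almost surely $\Xi(k)$ converges, $\sum_k\rho_k\lVert S(\hat{\bld x}(k))\rVert_Q^2<\infty$, and therefore $\lVert\bld x(k)-\bar{\bld x}\rVert_Q$ converges for every $\bar{\bld x}\in\fix(T)$ while the increments $\bld x(k+1)-\bld x(k)$ vanish. Assumption~\ref{ass:bounded_delays} then forces $\hat{\bld x}(k)-\bld x(k)\to\0$ a.s., and, $S$ being nonexpansive (hence Lipschitz) in $\ca H_Q$, one extracts a subsequence along which $S(\bld x(k))\to\0$. Since $(\bld x(k))$ remains in the compact set $\bld\Omega$ (each updated block is a convex combination of a point of $\Omega_i$ and $\textrm{prox}_{\bar f_i}(\cdot)\in\Omega_i$ when $\psi_k\le1$, and is bounded through $\Xi$ in any case), a further subsequence converges to some $\bar{\bld x}^\star\in\fix(T)$; the usual Opial-type argument, together with a.s. convergence of $\lVert\bld x(k)-\bar{\bld x}^\star\rVert_Q$, upgrades this to $\bld x(k)\to\bar{\bld x}^\star$ a.s., i.e.\ to an NWE of \eqref{eq:unc_game_CF}.

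The step I expect to be the main obstacle is the second one: selecting the increment weights $c_d$ and the Young's-inequality constants so that the estimate collapses to exactly the threshold $\tfrac{Np_{\min}}{(2\overline\varphi\sqrt{p_{\min}}+1)(1-\underline a)}$, while correctly carrying the $Q$-weighting through every step and checking that only $p_{\min}$---not the full distribution $(p_1,\dots,p_N)$---survives the worst-case bounding of the reweighted inner products. It is worth remarking that Theorem~\ref{th:convergence_asynch} subsumes Theorem~\ref{th:convergence_asynch_psi1}: the upper bound on $\psi_k$ above exceeds $1$ precisely when \eqref{eq:max_delay_bound} holds, so $\psi_k\equiv1$ is then admissible, and the present proof is obtained by carrying the parameter $\psi_k$ through the computation already performed in Appendix~\ref{app:proof_th_asynch_un}.
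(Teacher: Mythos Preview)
Your plan is sound but far more laborious than the paper's. The paper's proof is a two-line reduction to ARock \cite{Peng-Yan-Xu:2016:ARock}: by Lemma~\ref{lem:proxA_is_AVG_in_H_Q}, $T=\bld{\prox_f}\circ\bld A$ is $\eta$-averaged in $\ca H_{\bld Q}$, so one writes $T=(1-\eta)\Id+\eta\,\overline T$ with $\overline T$ nonexpansive and $\fix(\overline T)=\fix(T)$; substituting into \eqref{eq:dynamics_ARock_unc_psi} gives $\bld x(k+1)=\bld x(k)+\psi_k\eta\,\zeta_k\big(\overline T-\Id\big)\hat{\bld x}(k)$, which is precisely the ARock iteration for the nonexpansive operator $\overline T$ with effective step $\psi_k\eta$. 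The paper then invokes \cite[Lem.~13,~14]{Peng-Yan-Xu:2016:ARock} directly and recovers the stated bound on $\psi_k$ by dividing the ARock step-size threshold by $\eta$. The Fej\'er supermartingale with a tail of past increments, Young's inequality, Robbins--Siegmund, and Opial argument you outline is exactly the \emph{content} of those ARock lemmas reproduced by hand in $\ca H_{\bld Q}$; you are rebuilding the black box the paper simply cites.

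There is, however, one structural gap in your sketch. For non-uniform $(p_1,\dots,p_N)$, the conditional linear term is $-2\psi_k\sum_i p_i q_i\langle[\bld x(k)-\bar{\bld x}]_i\,|\,[S\hat{\bld x}(k)]_i\rangle$, which is \emph{not} a scalar multiple of $\langle\bld x(k)-\bar{\bld x}\,|\,S\hat{\bld x}(k)\rangle_Q$; your claim that it appears ``with coefficient $\propto\psi_k/N$'' and can then be controlled by the cocoercivity of $S$ in $\ca H_Q$ is only correct when all $p_i=1/N$. The ARock analysis handles this by running the Lyapunov in the $(p_i^{-1}q_i)$-weighted norm, so that the expectation over $\zeta_k$ restores a clean $Q$-inner product; with $\Xi$ written purely in the $Q$-norm the estimate will not close. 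This is not a ``worst-case bounding'' fix but a change in the norm defining $\Xi$ itself. (A minor point: your inequality $\eta<1-\underline a$ is reversed; the proof of Lemma~\ref{lem:proxA_is_AVG_in_H_Q} requires $\eta>1-\underline a$.)
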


\smallskip 
\begin{proof}
See Appendix~\ref{app:proof_th_asynch_un}.
\end{proof}
\smallskip

\subsection{Time-varying unconstrained dynamics}
\label{sec:time_var_unconstr}

A challenging problem related to the dynamics in \eqref{eq:group_dynamics} is studying its convergence when the communication network is time-varying (i.e., the associated adjacency matrix $A$ is time dependent). In this case, the update rules in \eqref{eq:group_dynamics} become
\begin{equation}\label{eq:dynamics_tv_A}
\boldsymbol x(k+1) = \boldsymbol{\textrm{prox}}_{\boldsymbol{f}}( \boldsymbol{A}(k)\,\boldsymbol x(k) ) \,,
\end{equation}
where $\boldsymbol{A}(k) = A(k) \otimes I_n$ and $A(k)$ is the adjacency matrix at time instant $k$.

Next, we assume \textit{persistent stochasticity} of the sequence $(\bld A(k))_{k \in \mathbb N}$, that can be seen as the time-varying counterpart of Standing Assumption~\ref{ass:row_stoch}. Similar assumptions can be found in several other works over time-varying graphs, e.g., \cite[Ass.~1]{blondel:convergence_multiagent_coord}, \cite[Ass.~4,5]{grammatico:18tcns}, \cite[Ass.~2,3]{nedic:ozdaglar:parrillo:10}.
\smallskip
\begin{assumption}[Persistent row stochasticity and self-loops]\label{ass:persistent_matrixA}
For all $k\in\bN$, the adjacency matrix $\bld A(k)$ is row stochastic and describes a strongly connected graph. Furthermore, there exists $\overline k\in\bN$ such that, for all $k>\overline{k}$, the matrix $\bld A(k)$ satisfies $\inf_{k>\overline{k}}\min_{i\in\ca N} [A(k)]_{ii} =: \underline{a}>0 $.  \hfill\QEDopen
\end{assumption}
\smallskip 

The concept of NWE in Definition~\ref{def:NetworkEquilibrium} is bound to the particular communication network considered. In the case of a time-varying communication topology, we focus on a different  class of equilibria, namely, those invariant with respect to changes in the communication topology.
\smallskip
\begin{definition}[{Persistent NWE \cite[Ass.~3]{grammatico:18tcns} }]   
A collective vector $\bar{\bld x}$ is a persistent NWE (p-NWE) of \eqref{eq:dynamics_tv_A} if there exists some positive constant $\overline k>0$, such that
\begin{equation}
\label{eq:p-NWE_set}
\bar{\bld x}\in\ca E := \cap_{k>\overline{k}} \:\fix\big(  \boldsymbol{\textrm{prox}}_{\boldsymbol{f}}( \boldsymbol{A}(k)\,\boldsymbol x(k) )   \big)\,.
\end{equation} 
\hfill\QEDopen
\end{definition}  
\smallskip

Next, we assume the existence of a p-NWE.
\smallskip

\begin{assumption}[Existence of a p-NWE]
\label{ass:exist_p-NWE}
The set of p-NWE of \eqref{eq:dynamics_tv_A} is non-empty, i.e., $\ca E\not= \varnothing$.\hfill\QEDopen
\end{assumption}
\smallskip

We note that if the operators $\prox_{\bar f_1}$, $\prox_{\bar f_2}$, $\dots$, $\prox_{\bar f_N}$ have at least one common fixed point, our convergence problem boils down to the setup studied in \cite{fullmer:morse:2018:common_fixed_point_finite_family_paracontraction}. 
 In this case, \cite[Th.~2]{fullmer:morse:2018:common_fixed_point_finite_family_paracontraction} can be applied to  the dynamics in \eqref{eq:dynamics_tv_A} to prove convergence to a p-NWE, $\bar{\bld x} = \bld 1\otimes \overline{x}$, where $\bar{x}$ is a common fixed point of the proximal operators $\prox_{f_i}$'s.
However, if this additional assumption is not met, then convergence is not directly guaranteed. In fact, even though at each time instant the update in \eqref{eq:dynamics_tv_A} describes converging dynamics, the convergence of the overall time-varying dynamics is not proven for every switching signal, see e.g. \cite[Part~II]{Liberzon2003:SwitchingInSystemAndControl}.

\smallskip
Since $\boldsymbol{A}(k)$ satisfies Standing Assumption~\ref{ass:row_stoch} for all $k$, we can apply the same reasoning adopted in the proof of Theorem~\ref{th:convergence_synch_unc} to show that every mapping $\boldsymbol{\textrm{prox}}_{\boldsymbol{f}}\circ \boldsymbol{A}(k)$ is AVG
in a particular space $\ca H_{\bld Q(k)}$, where in general $\bld Q(k)$ is different at different time instants. A priori, there is not a common space in which the AVG propriety holds for all the mappings, thus we cannot infer the convergence of the dynamics in \eqref{eq:dynamics_tv_A} under arbitrary switching signals.  
 In some particular cases, a common space can be found, e.g., if all the adjacency matrices are doubly stochastic, then the time-varying dynamics converge in the space $\ca H_I$, see \cite[Th.~3]{grammatico:18tcns}. 

To address the more general case with row stochastic adjacency matrices, we propose modified dynamics with convergence guarantees to a p-NWE, for every switching sequence, i.e.,
\begin{equation}\label{eq:dynamics_tv_transf}
\boldsymbol x(k+1) = \boldsymbol{\textrm{prox}}_{\boldsymbol{f}}\big( [I + \bld Q(k)(\bld A(k)-I)]\,\boldsymbol x(k) \big) \,.
\end{equation}  

These new dynamics are obtained by replacing $A(k)$ in \eqref{eq:dynamics_tv_A} with $I + \bld Q(k)(\bld A(k)-I)$, where  $\bld Q(k)$ is chosen as in Theorem~\ref{th:convergence_synch_unc}. Remarkably, this key modification makes the resulting operators $\left(\boldsymbol{\textrm{prox}}_{\boldsymbol{f}} \circ \big( I + \bld Q(k)(\bld A(k)-I) \big)\right)_{k \in \mathbb N}$ averaged in the same space, i.e., $\ca H_I$, for all $A(k)$ satisfying Assumption \ref{ass:row_stoch}, as explained in Appendix~\ref{app:tv_theorems}. 
Moreover, the change of dynamics does not lead to extra communications between the agents, since the matrix $\bld Q(k)$ is block diagonal.

\smallskip
The following theorem represents the main result of this section and shows that the modified dynamics in \eqref{eq:dynamics_tv_transf}, subject to arbitrary switching of communication topology, converge to a p-NWE of the game in \eqref{eq:dynamics_tv_A}, for any initial condition.
\smallskip 
 
\begin{theorem}[Convergence of time-varying dynamics]
\label{th:convergence_mod_tv_dyn}
Let Assumptions~\ref{ass:persistent_matrixA}, \ref{ass:exist_p-NWE} hold true. Then, for any $\boldsymbol x(0) \in \bold \Omega $, the sequence $(\bld x(k))_{k\in\bN}$ generated by \eqref{eq:dynamics_tv_transf} converges to a point $\bar{\boldsymbol x}\in \ca E $, with $\ca E$ as in \eqref{eq:p-NWE_set}, namely, a p-NWE of \eqref{eq:dynamics_tv_A}. \hfill\QEDopen 
\end{theorem}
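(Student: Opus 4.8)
The plan is to recast the iteration \eqref{eq:dynamics_tv_transf} as a sequence of proximal steps driven by operators that are all averaged in \emph{one and the same} Hilbert space --- unlike $\boldsymbol{\textrm{prox}}_{\boldsymbol{f}}\circ\bld A(k)$, which by Lemma~\ref{lem:proxA_is_AVG_in_H_Q} is only guaranteed averaged in the $k$-dependent space $\ca H_{\bld Q(k)}$ --- and then to combine Fej\'er monotonicity and asymptotic regularity with a compactness argument over the transformed matrices to handle the arbitrary switching. First I would observe that the transformed matrix $M(k):=I+Q(k)(A(k)-I)=I-Q(k)+Q(k)A(k)$, where $Q(k)=\diag(q_1(k),\dots,q_N(k))$ collects the normalized left PF eigenvector of $A(k)$ (so that $I+\bld Q(k)(\bld A(k)-I)=M(k)\otimes I_n=:\bld M(k)$), is \emph{doubly stochastic}: $M(k)\1=\1-q(k)+q(k)=\1$ since $A(k)\1=\1$; $\1^\top M(k)=\1^\top-q(k)^\top+q(k)^\top A(k)=\1^\top$ since $q(k)^\top A(k)=q(k)^\top$; $[M(k)]_{ij}=q_i(k)[A(k)]_{ij}\ge 0$ for $i\ne j$ and $[M(k)]_{ii}=1-q_i(k)\bigl(1-[A(k)]_{ii}\bigr)>0$ because $[A(k)]_{ii}>0$ and $q_i(k)<1$ (the last from $\lVert q(k)\rVert=1$, $q(k)>\0$, $N\ge2$); moreover $M(k)$ is strongly connected, its off-diagonal zero pattern being that of $A(k)$. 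By Assumption~\ref{ass:persistent_matrixA}, $\min_i[M(k)]_{ii}\ge\underline a>0$ for all $k>\overline k$.

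Since $M(k)$ is doubly stochastic its left PF eigenvector is $\1/\sqrt N$, so Lemma~\ref{lem:proxA_is_AVG_in_H_Q} applies \emph{with $Q=I$}: $T_k:=\boldsymbol{\textrm{prox}}_{\boldsymbol{f}}\circ\bld M(k)$ is $\tfrac{1}{2-\eta}$-AVG in $\ca H_I$ for every $\eta\in\bigl(0,\,1-\min_i[M(k)]_{ii}\bigr)$; picking $\eta=\tfrac12\bigl(1-\min_i[M(k)]_{ii}\bigr)$ and using $\min_i[M(k)]_{ii}\in[\underline a,1)$, each $T_k$ is $\tfrac{2}{3+\min_i[M(k)]_{ii}}$-AVG, hence $\theta$-AVG in $\ca H_I$ with the single constant $\theta=\tfrac23\in(\tfrac12,1)$, for all $k>\overline k$. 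Thus \eqref{eq:dynamics_tv_transf} reads $\bld x(k+1)=T_k\bld x(k)$, an iteration of operators $\theta$-averaged in the \emph{same} metric --- this is exactly what replacing $\bld A(k)$ by $\bld M(k)$ buys us. I would also record here that the p-NWE set $\ca E$ of \eqref{eq:p-NWE_set} is the common fixed-point set $\bigcap_{k>\overline k}\fix(T_k)$ (equivalently, $\bar{\bld x}\in\ca E$ iff $\bar{\bld x}=\boldsymbol{\textrm{prox}}_{\boldsymbol{f}}(\bld M(k)\bar{\bld x})$ for all $k>\overline k$), which is non-empty by Assumption~\ref{ass:exist_p-NWE}.

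Next, fix any $\bar{\bld x}\in\ca E$ and apply the $\theta$-AVG inequality of $T_k$ at $\bld x(k)$ and $\bar{\bld x}$; with $T_k\bar{\bld x}=\bar{\bld x}$ and $(\Id-T_k)\bld x(k)=\bld x(k)-\bld x(k+1)$ this gives, for $k>\overline k$,
\[ \lVert \bld x(k+1)-\bar{\bld x}\rVert^2\;\le\;\lVert \bld x(k)-\bar{\bld x}\rVert^2-\tfrac{1-\theta}{\theta}\,\lVert \bld x(k+1)-\bld x(k)\rVert^2 . \]
Therefore $\lVert\bld x(k)-\bar{\bld x}\rVert$ is non-increasing for $k>\overline k$, so $(\bld x(k))_{k\in\bN}$ is bounded and $\lVert\bld x(k)-\bar{\bld x}\rVert$ converges for \emph{every} $\bar{\bld x}\in\ca E$; summing the inequality over $k$ yields $\sum_k\lVert\bld x(k+1)-\bld x(k)\rVert^2<\infty$, hence $\lVert\bld x(k+1)-\bld x(k)\rVert\to0$.

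What remains is to identify the limit, and this is where I expect the main obstacle. Let $\bld x^\star$ be the limit of a convergent subsequence $\bld x(k_m)$. Because the operator acting at time $k_m$ is $T_{k_m}$ and varies with $m$, demiclosedness of $\Id-T$ for a \emph{fixed} $T$ is not directly available; instead I would use that $\{M(k):k>\overline k\}$ lies in the compact set of doubly stochastic matrices, so along a further subsequence $M(k_m)\to M^\star$, doubly stochastic with $\min_i[M^\star]_{ii}\ge\underline a$. Joint continuity of $(M,\bld x)\mapsto\boldsymbol{\textrm{prox}}_{\boldsymbol{f}}\bigl((M\otimes I_n)\bld x\bigr)$ then gives $\bld x(k_m+1)=T_{k_m}\bld x(k_m)\to\boldsymbol{\textrm{prox}}_{\boldsymbol{f}}\bigl((M^\star\otimes I_n)\bld x^\star\bigr)$, while $\lVert\bld x(k_m+1)-\bld x(k_m)\rVert\to0$ forces $\bld x(k_m+1)\to\bld x^\star$; hence $\bld x^\star\in\fix\bigl(\boldsymbol{\textrm{prox}}_{\boldsymbol{f}}\circ(M^\star\otimes I_n)\bigr)$, and one checks that $\ca E$ is contained in $\fix\bigl(\boldsymbol{\textrm{prox}}_{\boldsymbol{f}}\circ(M^\star\otimes I_n)\bigr)$ for every such accumulation operator. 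The genuinely hard step is to promote ``fixed by the accumulation operators'' to $\bld x^\star\in\ca E$, i.e.~$\bld x^\star$ fixed by \emph{every} $T_k$, $k>\overline k$: this is precisely where the persistence in Assumption~\ref{ass:persistent_matrixA} must be used, through an argument in the spirit of the theory of (here, relatively compact) families of paracontractions, cf.~\cite{fullmer:morse:2018:common_fixed_point_finite_family_paracontraction}. Once every subsequential limit of $(\bld x(k))$ is shown to lie in $\ca E$, a standard Opial-type argument --- Fej\'er monotonicity with respect to $\ca E$ from the previous paragraph, plus the existence of a subsequential limit in $\ca E$ --- upgrades this to convergence of the whole sequence to some $\bar{\bld x}\in\ca E$, i.e.~a p-NWE of \eqref{eq:dynamics_tv_A}.
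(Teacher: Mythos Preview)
Your approach coincides with the paper's: both show that $M(k)=I+Q(k)(A(k)-I)$ is doubly stochastic with positive diagonal (the content of Lemmas~\ref{lem:rstoch_in_dstoch} and~\ref{cor:A_transf}), whence by Lemma~\ref{lem:proxA_is_AVG_in_H_Q} with $Q=I$ each $T_k=\boldsymbol{\textrm{prox}}_{\boldsymbol{f}}\circ\bld M(k)$ is averaged in the \emph{same} space $\ca H_I$. The paper does not carry out the Fej\'er / asymptotic-regularity / cluster-point analysis you sketch; it simply invokes \cite[Prop.~3.4(iii)]{Yamada2014:Composition_AVG_maps} for the entire convergence conclusion, asserting that all cluster points of $(\bld x(k))_{k\in\bN}$ lie in $\ca E$. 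The ``hard step'' you flag---promoting ``fixed by the accumulation operators'' to membership in $\ca E=\bigcap_{k>\overline k}\fix(T_k)$---is precisely what the paper offloads to that reference, so you should consult it rather than build a paracontraction argument from scratch; your compactness route only yields $\bld x^\star\in\fix\bigl(\boldsymbol{\textrm{prox}}_{\boldsymbol{f}}\circ(M^\star\otimes I_n)\bigr)$, and as you correctly note this does not by itself give $\bld x^\star\in\fix(T_j)$ for every $j>\overline k$.

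One small slip: you pick $\eta=\tfrac12\bigl(1-\min_i[M(k)]_{ii}\bigr)$ based on the stated range $\eta\in(0,1-\underline a)$ in Lemma~\ref{lem:proxA_is_AVG_in_H_Q}(i), but that statement carries a typo---its proof actually requires $\eta>1-\underline a$ so that $B=(A-(1-\eta)I)/\eta\ge0$. Your conclusion of a $k$-independent averaging constant survives with, e.g., $\eta=1-\underline a/2$, giving $\theta=\tfrac{2}{2+\underline a}<1$.
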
 
\begin{proof}
See Appendix~\ref{app:tv_theorems}.
\end{proof}   
\smallskip

%

We clarify that in general, the computation of $Q(k)$ associated to each $A(k)$ requires global information on the communication network. Therefore, this solution is suitable for the case of switching between a finite set of adjacency matrices, for which the associated matrices $Q(k)$ can be computed offline. 

Nevertheless, for some network structures the PF eigenvector is known or it can be explicitly computed locally.
For example, if the matrix $A(k)$ is symmetric, hence doubly-stochastic, or if each agent $i$ knows the weight that its neighbours assign to the information he communicates, the $i$-the component of $q(k)$ can be computed as $\lim_{t\rightarrow \infty} [A(k)^\top]_i^t x = q_i(k)$, for any $x\in\bR^N$ \cite[Prop.~1.68]{bullo:cortes:martinez:2009:distributed_control}. 
Moreover, if each agent $i$ has the same out and in degree, denoted by $d_{i}(k)$, and the weights in the adjacency matrix are chosen as $[A(k)]_{ij} = \frac{1}{d_{i}(k)}$, then the left PF eigenvector is $q(k):=\col( ( d_{i}(k)/\sum_{j=1}^Nd_j(k) )_{i\in\ca N})$.  In other words, in this case each agent must only know its out-degree to compute its component of $q(k)$.  
\smallskip

\section{Proximal dynamics under coupling constraints}\label{sec:coupling_constraint}

\subsection{Problem formulation}
\label{sec:constrained_prox_problem_form}
 
In this section, we consider a more general setup in which the agents in the game, not only are subject to local constraints, but  also to $M$ affine separable coupling constraints. 
Thus, let us consider the \textit{collective feasible decision set}
\begin{equation}
\label{eq:collective_feas_set}
\bld{\ca X} := \bld \Omega \cap \{\bld x\in\bR^{nN}\,|\, C\bld x \leq c\} 
\end{equation}
where $C\in\bR^{M\times nN}$ and $c\in\bR^M$. 
For every agent $i\in\ca N$, the set of points satisfying the coupling constraints reads as
\begin{equation}
\label{eq:X_i_coupling_constr}
\begin{split}
\ca X_i(\bld x_{-i}) := \left\{y\in \bR^n \, | \, C_i y +\textstyle{\sum_{\substack{ j=1\\j\not =i}}^N}C_j x_j \leq c  \right\}\,.
\end{split}
\end{equation}


\begin{stassumption}
\label{ass:convex_constr_set}
For all $i\in\ca N$, the \textit{local feasible decision set} $\Omega_i\cap\ca X_i(\bld x_{-i})$ satisfies Slater's condition. 
\hfill \QEDopen
\end{stassumption}
\smallskip

The original game formulation in \eqref{eq:unc_game_CF} changes to consider also the coupling constraints, i.e.,
\begin{align}
\label{eq:unc_game_CF_CC}
\forall i \in \ca N: \;
\begin{cases}
\textstyle
\argmin_{y \in \mathbb R^n}& \textstyle
 \bar f_i(y)  + \frac{1}{2} \left\| y - \sum_{j=1}^N a_{i,j}x_j \right\|^2\\
\; \text{ s.t. } &  y\in \ca X_i(\bld x_{-i}).
\end{cases}
\end{align}
Hence the correspondent myopic dynamics read as
\begin{equation}
x_i(k+1)= \argmin_{y\in\ca X_i(\bld{x}_{-i}(k))} g_i \left(y,\textstyle\sum_{j=1}^N a_{i,j}x_j(k) \right)\: .
\label{eq:constr_game}
\end{equation}
The concept of NWE (Definition~\ref{def:NetworkEquilibrium}) can be naturally extended for the case of network games with coupling constraints, as formalized next.

\smallskip
\begin{definition}[Generalized Network Equilibrium] \label{def:generalized_NE}
A collective vector $\overline{\boldsymbol x}$ is a  generalized network equilibrium (GNWE) for the game in~\eqref{eq:unc_game_CF_CC} if, for every $ i\in\ca N$,
\begin{equation*}\label{eq:dynamics_constr_1}
\overline{x}_i=  \argmin_{y\in\mathcal{X}_i(\bld x_{-i})} g_i \left(y,\textstyle\sum_{j=1}^Na_{i,j}\overline{x}_j \right) \:.
\end{equation*}
\hfill\QEDopen
\end{definition}
\smallskip


The following example shows that the dynamics in  \eqref{eq:dynamics_constr_1} fail to converge even for simple coupling constraints.

\smallskip
\begin{example}
Consider a 2-player game, defined as in \eqref{eq:unc_game_CF_CC},
where, for $i\in\{1,2\}$,  $x_i \in \bR$ and the local feasible decision set is defined as $ \ca X_i(u) := \{ v \in \bR \, | \, u+v = 0 \} = \{ - u \} $. The game is jointly convex since the collective feasible decision set is  described by the convex set $
\bld{\ca X}:= \{ \bld x \in \bR^2 \, |\, x_1 + x_2 = 0 \}
$. 
The parallel myopic best response dynamics are described in closed form as the discrete-time linear system:
\begin{align}
\begin{bmatrix}
x_1(k+1) \\ x_2(k+1)
\end{bmatrix}
=
\begin{bmatrix}
0 & -1\\
-1 & 0
\end{bmatrix}
\begin{bmatrix}
x_1(k) \\
x_2(k)
\end{bmatrix},
\end{align}
which is not globally convergent, e.g., consider $x_1(0)=x_2(0)=1$. \hfill\QEDopen
\end{example}
\smallskip


The myopic dynamics of the agents fail to converge, thus we  rephrase them  into some analogues pseudo-collaborative ones. In fact, the players aim to minimize their local cost function, while at the same time coordinate with the other agents to satisfy the coupling constraints.
Toward this aim, we first dualize the problem and transform it in an auxiliary (extended) network game  \cite[Ch.~3]{cominetti:facchinei:lasserre}. 
Finally, we design a semi-decentralized iterative algorithm that ensure the convergence of these dynamics to a GNWE.

Let us now introduce the dual variable $\lambda_i\in\bR^M_{\geq 0}$ for each player $i\in\ca N$, and define the concept of \textit{extended network equilibrium} arising from this new problem structure.

\smallskip 
\begin{definition}[Extended Network Equilibrium] \label{def:Extended_NE}
The pair $(\boldsymbol{\overline  x},\bld{\overline{\lambda}} )$,  is an Extended Network Equilibrium  (ENWE) for the game in~\eqref{eq:unc_game_CF_CC} if, for every $ i\in\ca N$, it satisfies
\begin{subequations}
\begin{align}
\label{eq:cond_ENE1}
\overline x_i & = \argmin_{y\in\bR^n} \, g_i \left( y,\textstyle\sum_{j=1}^Na_{i,j}\overline{x}_j \right) + \overline \lambda^\top_i C_iy,\\
\label{eq:cond_ENE2}
\overline \lambda_i & =  \argmin_{\xi\in\bR^M_{\geq 0}} \, -\xi^\top (C\boldsymbol{\overline x}-c) \:.
\end{align}
\end{subequations}
\hfill\QEDopen
\end{definition}
\smallskip

To find an ENWE of the game in \eqref{eq:unc_game_CF_CC}, we assume the presence of a central coordinator. It broadcasts to all agents an auxiliary variable $\sigma$ that each agent $i$ uses to compute its local dual variable $\lambda_i$. Namely, every agent $i$ applies a scaling factor $\alpha_i\in[0,1]$ to $\sigma$ to attain its dual variable, i.e. $\lambda_i=\alpha_i \,\sigma$. The values of the scaling factor represent a split of the burden that the agents experience to satisfy the constraints, hence $\sum_{i=1}^N \alpha_i = 1$. These game equilibrium problems were studied for the first time in the seminal work of Rosen \cite{rosen:65}, where the author introduces the notion of \textit{normalized equilibrium}. We specialize this equilibrium idea for the problem at hand,  introducing the notion of  \textit{normalized extended network equilibrium} (n-ENWE).
\smallskip 
\begin{definition}[normalized-ENWE] \label{def:Extended_NE}
The pair $(\boldsymbol{\overline  x},\overline{\sigma } )$, is a \textit{normalized}-Extended Network Equilibrium (n-ENWE) for the game in~\eqref{eq:unc_game_CF_CC}, if for all $ i\in\ca N$ it satisfies
\begin{subequations}
\begin{align}
\label{eq:cond_nENE1}
\overline x_i & = \argmin_{y\in\bR^n} \, g_i \left(y,\textstyle\sum_{j=1}^Na_{i,j}\overline{x}_j \right) + \alpha_i\,\overline \sigma^\top C_iy,\\
\label{eq:cond_nENE2}
\overline \sigma & = \argmin_{\varsigma\in\bR^M_{\geq 0}} \, -\varsigma^\top (C\boldsymbol{\overline x}-c),
\end{align}
\end{subequations}
where $\alpha_i>0$.
\hfill\QEDopen
\end{definition}
\smallskip

From \eqref{eq:cond_nENE1}~--~\eqref{eq:cond_nENE2}, one can  see that the class of n-ENWE is a particular instance of ENWE and, at the same time, a generalization of the case in which all the agents adopt the same dual variable, hence $\alpha_i=\textstyle{1/M}$, for all $i\in\ca N$. This latter case is widely studied in literature, since describes a fair split between the agents of the burden to satisfy the constraints \cite{yi_pavel:2017:disribiuted_primal_dual_conf,belgioioso_grammatico:2017:semi_decentralized_NE_seeking}. 


Next, we recast the n-ENWE described by the two inclusions (\ref{eq:cond_nENE1})~--~(\ref{eq:cond_nENE2}) as a fixed point of a suitable mappings. 
In fact, \eqref{eq:cond_nENE1} can be equivalently rewritten as $\overline{\bld x} = \boldsymbol {\prox_{f}} (\boldsymbol A \bld x - \bld\Lambda C^\top\overline \sigma )$, where $\bld \Lambda=\diag((\alpha_i)_{i\in\ca N})\otimes I_n$, while \eqref{eq:cond_nENE2} holds true if and only if 
$\overline \sigma = \textrm{proj}_{\bR^M} (\overline{\sigma}+C\boldsymbol{\overline x}-c) $.

To combine in a single operator \eqref{eq:cond_nENE1} and \eqref{eq:cond_nENE2}, we first introduce two mappings, i.e., 
\begin{equation}\label{eq:cal_F_map}
\boldsymbol{\ca R } :=\mathrm{diag} (\boldsymbol {\prox_{f}}, \:\textrm{proj}_{\bR^M_{\geq 0}})
\end{equation} 
and  the affine mapping $\boldsymbol{\ca G}:\bR^{nN+M}\rightarrow \bR^{nN+M}$ defined as
\begin{equation}\label{eq:ca_G_map}
\boldsymbol{\ca G}(\cdot) :=\boldsymbol{G} \,\cdot + \begin{bmatrix}
\boldsymbol 0\\c
\end{bmatrix}  := \begin{bmatrix}
\boldsymbol A & -\bld \Lambda C^\top\\
C & I
\end{bmatrix}\cdot - \begin{bmatrix}
\boldsymbol 0\\c
\end{bmatrix}\:.  
\end{equation}
The composition of these two operators provides a compact definition of n-ENWE for the game in \eqref{eq:unc_game_CF_CC}. In fact, a point $(\boldsymbol{\overline{x}},\overline{\sigma})$ is an n-ENWE if and only if $\col(\boldsymbol{\overline{x}},\overline{\sigma})\in\fix(\bld{\ca R} \circ \bld{\ca G})$, indeed the explicit computation of $\bld{\ca R} \circ \bld{\ca G}$ leads directly to  \eqref{eq:cond_nENE1} and \eqref{eq:cond_nENE2}.

The following lemmas show that an n-ENWE is also a GNWE by proving that a fixed point of $\boldsymbol{\ca R} \circ \boldsymbol{\ca G}$ is a GNWE.
\smallskip

\begin{lemma}[GNWE as fixed point]\label{lemma:GNWE_as_fixed_point} Let Assumption \ref{ass:convex_constr_set} hold true. Then, the following statements are equivalent:
\begin{enumerate}[(i)]
\item $\overline{\boldsymbol x}$ is a  GNWE for the game in \eqref{eq:unc_game_CF_CC};
\item $\exists \overline \sigma\in\bR^{M}$ such that $\col(\overline{\boldsymbol x},\overline \sigma ) \in \fix (\boldsymbol{\ca R} \circ \boldsymbol{\ca G})$.
\hfill\QEDopen
\end{enumerate}
\end{lemma}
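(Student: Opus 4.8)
The plan is to show the two directions of equivalence by unpacking the composition $\boldsymbol{\ca R}\circ\boldsymbol{\ca G}$ componentwise and matching it against the KKT-type characterization of the generalized network equilibrium. First I would record the explicit form of the fixed-point equation: writing $\col(\overline{\bld x},\overline\sigma)\in\fix(\boldsymbol{\ca R}\circ\boldsymbol{\ca G})$ and using \eqref{eq:cal_F_map}--\eqref{eq:ca_G_map}, this is equivalent to the pair
\begin{align*}
\overline{\bld x} &= \boldsymbol{\prox_{f}}\!\left( \boldsymbol A\,\overline{\bld x} - \bld\Lambda C^\top\overline\sigma \right),\\
\overline\sigma &= \textrm{proj}_{\bR^M_{\geq 0}}\!\left( \overline\sigma + C\overline{\bld x} - c \right).
\end{align*}
The second line, by the standard characterization of the projection onto a cone (equivalently, the resolvent of the normal-cone operator), is equivalent to the complementarity conditions $\overline\sigma\geq 0$, $C\overline{\bld x}-c\leq 0$, $\overline\sigma^\top(C\overline{\bld x}-c)=0$, i.e. $\overline{\bld x}$ is feasible for the coupling constraints and $\overline\sigma$ is a nonnegative multiplier with the right complementarity. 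The first line, using $\prox_{\bar f_i}=(\Id+\partial\bar f_i)^{-1}$ and expanding $\bar f_i = f_i+\iota_{\Omega_i}$, unpacks agentwise into $0\in\partial f_i(\overline x_i)+N_{\Omega_i}(\overline x_i)+(\overline x_i-\sum_j a_{i,j}\overline x_j)+\alpha_i C_i^\top\overline\sigma$.

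Next I would show this is exactly the KKT system for each agent's local optimization in Definition~\ref{def:generalized_NE}. For the direction (ii)$\Rightarrow$(i): given the fixed point, for each $i$ the inclusion above together with $\overline\sigma\geq0$, $C_i\overline x_i+\sum_{j\neq i}C_j\overline x_j\leq c$, and $\overline\sigma^\top(C\overline{\bld x}-c)=0$ constitute the KKT conditions (stationarity, primal feasibility, dual feasibility, complementary slackness) for the convex program $\min_{y\in\ca X_i(\bld x_{-i})} g_i(y,\sum_j a_{i,j}\overline x_j)$ with multiplier $\alpha_i\overline\sigma$ attached to the constraint $C_i y+\sum_{j\neq i}C_j\overline x_j\leq c$; since $g_i$ is (strictly) convex and, under Standing Assumption~\ref{ass:convex_constr_set}, Slater's condition holds, these KKT conditions are necessary and sufficient for optimality, so $\overline x_i=\argmin_{y\in\ca X_i(\bld x_{-i})}g_i(y,\sum_j a_{i,j}\overline x_j)$, which is precisely the GNWE condition. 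For (i)$\Rightarrow$(ii): given a GNWE $\overline{\bld x}$, each local problem is convex and satisfies Slater, so there exist local multipliers $\overline\mu_i\in\bR^M_{\geq0}$ verifying the agent-$i$ KKT system; the subtlety is producing a single shared $\overline\sigma$ with $\overline\mu_i=\alpha_i\overline\sigma$. Here I would invoke the structure that all local constraints $\ca X_i(\bld x_{-i})$ derive from the one common constraint $C\bld x\leq c$, so stacking the stationarity conditions and exploiting that complementary slackness at the common feasible point $\overline{\bld x}$ forces the active/inactive pattern of the constraint rows to be identical across agents — this lets one rescale to a common $\overline\sigma$ by setting $\overline\sigma := \overline\mu_i/\alpha_i$ consistently (choosing, e.g., $\overline\sigma$ supported on the active rows). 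Then $(\overline{\bld x},\overline\sigma)$ satisfies both fixed-point equations above.

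The main obstacle I expect is precisely the construction of the common dual variable $\overline\sigma$ in the (i)$\Rightarrow$(ii) direction: a priori each agent's KKT system only yields its own multiplier, and without extra argument there is no reason these are positive scalar multiples of one common vector. The resolution hinges on the \emph{normalized-equilibrium} structure (the $\alpha_i$'s being fixed in advance and summing appropriately) together with Slater's condition guaranteeing a nonempty, suitably unique set of multipliers; this is the classical Rosen-type observation, and I would cite \cite{rosen:65} and lean on Assumption~\ref{ass:convex_constr_set}. Everything else is routine bookkeeping: rewriting resolvents as subdifferential inclusions, rewriting cone projections as complementarity conditions, and matching terms. I would present the argument as a chain of equivalences read off from the componentwise expansion of $\fix(\boldsymbol{\ca R}\circ\boldsymbol{\ca G})$, with the multiplier-consistency step isolated as the one place where convexity plus Slater is genuinely used.
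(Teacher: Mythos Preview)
The paper does not give its own proof of this lemma: it simply states that the result is a direct consequence of \cite[Lem.~2]{grammatico:18tcns} and omits the argument. So there is no ``paper's proof'' to compare against beyond that citation; what can be assessed is whether your plan would actually establish the equivalence.

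Your treatment of the direction (ii)$\Rightarrow$(i) is correct and is exactly the standard route: expand $\fix(\boldsymbol{\ca R}\circ\boldsymbol{\ca G})$ componentwise, rewrite the $\textrm{proj}_{\bR^M_{\geq 0}}$ line as complementarity, rewrite the $\boldsymbol{\prox_f}$ line as agentwise stationarity via $\prox_{\bar f_i}=(\Id+\partial\bar f_i)^{-1}$, and recognize the result as the KKT system of each agent's local problem with multiplier $\alpha_i\overline\sigma$. Under Standing Assumption~\ref{ass:convex_constr_set} these conditions are sufficient for optimality, so $\overline{\bld x}$ is a GNWE. This is also the only direction actually used later in the paper (Theorem~\ref{th:convergence_Prox_GNWE}).

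The direction (i)$\Rightarrow$(ii), however, has a real gap in your plan. You correctly flag the obstacle: a GNWE gives you, for each $i$, \emph{some} multiplier $\overline\mu_i\in\bR^M_{\geq 0}$ from the local KKT system, and you must produce a single $\overline\sigma$ with $\overline\mu_i=\alpha_i\overline\sigma$ for the \emph{fixed} weights $\alpha_i$ baked into $\bld\Lambda$ (hence into $\boldsymbol{\ca G}$). Your proposed fix --- ``the active/inactive pattern is identical across agents, so one can rescale to a common $\overline\sigma$'' --- does not work: sharing an active set at the common point $\overline{\bld x}$ in no way forces the individual multiplier vectors to be scalar multiples of one another, let alone with the prescribed ratios $\alpha_i$. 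This is precisely the classical distinction between generalized equilibria and \emph{normalized} (variational) equilibria in Rosen's sense: in general the set of GNWE is strictly larger than the set of points arising as the primal part of a fixed point of $\boldsymbol{\ca R}\circ\boldsymbol{\ca G}$. Citing \cite{rosen:65} for existence of normalized equilibria does not bridge this; Rosen shows that normalized equilibria exist, not that every GNE is normalized. If you want to rescue this direction you would need an additional argument specific to the present setup (or to check what \cite[Lem.~2]{grammatico:18tcns} actually claims and under which hypotheses), rather than the active-set heuristic.
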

\smallskip

\smallskip
\begin{lemma}[Existence of a GNWE]\label{lemma:existence_GNWE}
Let Assumption \ref{ass:convex_constr_set} hold true. It always exists a GNWE for the game in \eqref{eq:unc_game_CF_CC}.
\hfill\QEDopen
\end{lemma}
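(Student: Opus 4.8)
The plan is to combine the equivalence just established in Lemma~\ref{lemma:GNWE_as_fixed_point} with a fixed-point existence argument for the operator $\boldsymbol{\ca R}\circ\boldsymbol{\ca G}$. By Lemma~\ref{lemma:GNWE_as_fixed_point}, it suffices to show that $\fix(\boldsymbol{\ca R}\circ\boldsymbol{\ca G})\neq\varnothing$, since any $\col(\overline{\boldsymbol x},\overline\sigma)$ in that set yields a GNWE $\overline{\boldsymbol x}$. So the real content is: produce a fixed point of $\boldsymbol{\ca R}\circ\boldsymbol{\ca G}$. The obstacle here, compared to the unconstrained case in Remark~\ref{rem:NWE_as_fix}, is that $\boldsymbol{\ca R}\circ\boldsymbol{\ca G}$ no longer maps a compact convex set into itself: while $\boldsymbol{\prox_f}$ lands in $\boldsymbol\Omega$ (compact) and $\textrm{proj}_{\bR^M_{\geq 0}}$ lands in the unbounded orthant, the $\sigma$-component is a priori unbounded, so Brouwer/Schauder cannot be applied naively.

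First I would restrict the domain. Since $\boldsymbol{\prox_f}(\cdot)\in\boldsymbol\Omega$ for every input, any fixed point has its $\boldsymbol x$-component in the compact set $\boldsymbol\Omega$. For the dual component, I would argue that Slater's condition (Standing Assumption~\ref{ass:convex_constr_set}) provides an \emph{a priori bound} on $\overline\sigma$ at any fixed point: the fixed-point equation forces $\overline\sigma=\textrm{proj}_{\bR^M_{\geq 0}}(\overline\sigma+C\overline{\boldsymbol x}-c)$, i.e. $\overline\sigma\in\bR^M_{\geq0}$, $C\overline{\boldsymbol x}-c\leq 0$, and $\overline\sigma^\top(C\overline{\boldsymbol x}-c)=0$ (the standard characterization of the projection fixed point as a complementarity condition / normal-cone inclusion $-(C\overline{\boldsymbol x}-c)\in N_{\bR^M_{\geq0}}(\overline\sigma)$). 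Meanwhile the $\boldsymbol x$-equation is the KKT stationarity of a strongly convex program, so $(\overline{\boldsymbol x},\overline\sigma)$ is precisely a KKT pair of the jointly-convex game; under Slater, the multipliers of such a system lie in a bounded set (a standard consequence — the dual feasible set is compact when a Slater point exists, cf.\ \cite[Ch.~3]{cominetti:facchinei:lasserre}). Hence there is $R>0$ with every fixed point contained in the compact convex set $\boldsymbol\Omega\times([0,R]^M)$.

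Then I would invoke a fixed-point theorem on a compactified domain. Let $\ca D:=\boldsymbol\Omega\times\Sigma$ where $\Sigma\subset\bR^M_{\geq0}$ is a large compact convex set (e.g.\ $[0,R]^M$ enlarged slightly). On $\ca D$, the map $\boldsymbol{\ca R}\circ\boldsymbol{\ca G}$ need not be self-mapping, so I would instead use a standard truncation trick: compose with the projection $\textrm{proj}_{\ca D}$ to obtain $T:=\textrm{proj}_{\ca D}\circ\boldsymbol{\ca R}\circ\boldsymbol{\ca G}$, which is continuous (projections and proximal/projection operators are nonexpansive, $\boldsymbol{\ca G}$ is affine) and maps the compact convex set $\ca D$ into itself; Brouwer's theorem gives a fixed point $\overline{\boldsymbol z}\in\ca D$ of $T$. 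The final step is to remove the artificial truncation: show that the Brouwer fixed point of $T$ is actually a fixed point of $\boldsymbol{\ca R}\circ\boldsymbol{\ca G}$ itself. This follows from the a priori bound — one checks that $\boldsymbol{\ca R}(\boldsymbol{\ca G}(\overline{\boldsymbol z}))$ already lies in the interior region where the outer projection acts as the identity, so $\textrm{proj}_{\ca D}$ is vacuous at that point; equivalently, a standard degree/normal-cone argument (e.g.\ \cite[Th.~4.1.5]{smart1980fixedpoint_theory} as used in Remark~\ref{rem:NWE_as_fix}) rules out fixed points on the boundary of $\Sigma$ because any such point would violate the Slater-based multiplier bound. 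By Lemma~\ref{lemma:GNWE_as_fixed_point}, the resulting $\overline{\boldsymbol x}$ is a GNWE, completing the proof. I expect the main obstacle to be making the dual-variable bound rigorous — i.e.\ cleanly deducing boundedness of $\overline\sigma$ from Slater's condition and the complementarity structure of the fixed-point equation — since the $\boldsymbol x$-side compactness is immediate.
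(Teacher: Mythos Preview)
The paper does not actually prove this lemma; it simply invokes \cite[Prop.~2]{grammatico:18tcns}. Your route through $\fix(\boldsymbol{\ca R}\circ\boldsymbol{\ca G})$, truncation of the dual, and Brouwer is a legitimate and standard strategy for KKT systems with unbounded multiplier sets, but there is a genuine circularity in the final step. The a priori bound you describe --- that Lagrange multipliers of the game are bounded under Slater --- applies to fixed points of $\boldsymbol{\ca R}\circ\boldsymbol{\ca G}$, i.e.\ to genuine KKT pairs. The Brouwer fixed point $\overline{\boldsymbol z}$, however, is only a fixed point of the \emph{truncated} map $T=\proj_{\ca D}\circ\boldsymbol{\ca R}\circ\boldsymbol{\ca G}$; when $\overline\sigma$ lies on $\partial\Sigma$ it is \emph{not} a KKT pair (complementarity fails: at $[\overline\sigma]_j=R$ the fixed-point relation only yields $[C\overline{\boldsymbol x}-c]_j\geq 0$, not $=0$), so the KKT-multiplier bound cannot be invoked to exclude it. What you actually need is a bound, uniform in $R$, on the $\sigma$-component of fixed points of $T$ itself. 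This can be obtained --- the standard VI-truncation argument tests the variational inequality satisfied by the truncated solution against the Slater point and extracts the bound directly --- but it is real work, not ``one checks'', and here the weights $\alpha_i$ in $\boldsymbol\Lambda$ make the bookkeeping nontrivial.

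A far shorter route, and presumably the content of the cited reference, bypasses the dual entirely: apply Brouwer directly to the best-response map $\boldsymbol x\mapsto\col\big(\argmin_{y\in\Omega_i\cap\ca X_i(\boldsymbol x_{-i})}g_i(y,\textstyle\sum_j a_{i,j}x_j)\big)_{i\in\ca N}$ on the compact convex set $\boldsymbol\Omega$. Each $g_i$ is continuous and strictly convex in $y$, and the affine constraint correspondences $\ca X_i(\cdot)$ are continuous under Standing Assumption~\ref{ass:convex_constr_set} (Slater gives lower hemicontinuity); by Berge's maximum theorem the best response is single-valued and continuous from $\boldsymbol\Omega$ into $\boldsymbol\Omega$, and Brouwer yields a fixed point, which is a GNWE by Definition~\ref{def:generalized_NE}.
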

\smallskip

The two lemmas above are direct consequences of \cite[Lem.~2, Prop.~2]{grammatico:18tcns} respectively, so the proofs are omitted.

\subsection{Algorithm derivation (Prox-GNWE)}
\label{sec:splitting_methods}
In this section, we derive an iterative algorithm seeking n-
GNWE (Prox-GNWE) for the game in \eqref{eq:unc_game_CF_CC}. 
The set of fixed points of the operator $\boldsymbol{\ca R } \circ \boldsymbol{\ca G }$ can be expressed as the set of zeros of other  suitable operators, as formalized next. 

\smallskip 
%
\begin{lemma}[{\cite[Prop.~26.1 (iv)]{Bauschke2010:ConvexOptimization}}] \label{lemma:fixed_point_as_zero}
Let $\bld{\ca B} : = F \times N_{\bR^{M}_{\geq 0}}$, with $F:=\prod_{i=1}^N \partial \bar f_i$. Then,
$\fix \left(\boldsymbol{\ca R } \circ \boldsymbol{\ca G } \right) = \textrm{zer}\left( \bld{\ca B} + \Id - \boldsymbol{\ca G }  \right)$.~\hfill\QEDopen
\end{lemma}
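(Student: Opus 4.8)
The statement to prove is $\fix(\boldsymbol{\ca R}\circ\boldsymbol{\ca G}) = \zer(\bld{\ca B} + \Id - \boldsymbol{\ca G})$, where $\boldsymbol{\ca R} = \diag(\boldsymbol{\prox_f},\proj_{\bR^M_{\geq 0}})$, $\boldsymbol{\ca G}$ is the affine map in \eqref{eq:ca_G_map}, and $\bld{\ca B} = F\times N_{\bR^M_{\geq 0}}$ with $F = \prod_i \partial\bar f_i$.

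The key observation is that each block of $\boldsymbol{\ca R}$ is a resolvent. Indeed $\boldsymbol{\prox_f} = (\Id + F)^{-1} = \mathrm{J}_F$ (the block-diagonal proximal operator is the resolvent of the block-diagonal subdifferential $F$, by the standard identity $\prox_{\bar f_i} = \mathrm{J}_{\partial\bar f_i}$, e.g. \cite[Ex.~23.3]{bauschke:combettes}), and $\proj_{\bR^M_{\geq 0}} = \mathrm{J}_{N_{\bR^M_{\geq 0}}}$ since the projection onto a closed convex set is the resolvent of its normal cone. Since $\mathrm{J}$ of a block-diagonal operator is block-diagonal, we conclude $\boldsymbol{\ca R} = \mathrm{J}_{\bld{\ca B}} = (\Id + \bld{\ca B})^{-1}$. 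This is exactly the hypothesis of \cite[Prop.~26.1(iv)]{Bauschke2010:ConvexOptimization}.

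With this identification, the argument is the routine resolvent manipulation. First I would fix $\bld\xi := \col(\bld x,\sigma)$ and write the chain of equivalences:
\begin{align*}
\bld\xi\in\fix(\boldsymbol{\ca R}\circ\boldsymbol{\ca G})
&\iff \bld\xi = \mathrm{J}_{\bld{\ca B}}(\boldsymbol{\ca G}\,\bld\xi)
\iff \boldsymbol{\ca G}\,\bld\xi \in (\Id + \bld{\ca B})(\bld\xi)\\
&\iff \boldsymbol{\ca G}\,\bld\xi - \bld\xi \in \bld{\ca B}(\bld\xi)
\iff \bld 0 \in (\bld{\ca B} + \Id - \boldsymbol{\ca G})(\bld\xi)
\iff \bld\xi\in\zer(\bld{\ca B} + \Id - \boldsymbol{\ca G}).
\end{align*}
The only point needing a word of care is the second equivalence: $y = \mathrm{J}_{\bld{\ca B}}(u) \iff u\in(\Id+\bld{\ca B})(y)$ uses that $\mathrm{J}_{\bld{\ca B}} = (\Id+\bld{\ca B})^{-1}$ as a set-valued inverse, which is legitimate here because $\bld{\ca B}$ is maximally monotone (a product of subdifferentials of proper l.s.c. convex functions and a normal cone, all maximally monotone by \cite[Th.~20.25]{bauschke:combettes}), so $\mathrm{J}_{\bld{\ca B}}$ is in fact single-valued with full domain and the inverse relation is unambiguous.

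I do not anticipate a real obstacle: the entire content is recognizing that $\boldsymbol{\ca R}$ is the resolvent $\mathrm{J}_{\bld{\ca B}}$, after which the cited proposition applies verbatim. The mildest subtlety is keeping the block structure straight — checking that $\mathrm{J}$ commutes with $\diag(\cdot,\cdot)$ and that $\boldsymbol{\prox_f}$ itself is $\mathrm{J}$ of the \emph{block-diagonal} operator $F$ rather than of some coupled operator — but this is immediate from \eqref{eq:group_proximity_operator} and the definition of $F$. Hence the proof reduces to stating the resolvent identification and invoking \cite[Prop.~26.1(iv)]{Bauschke2010:ConvexOptimization}.
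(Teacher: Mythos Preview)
Your proposal is correct. The paper does not actually prove this lemma: it is stated with a direct citation to \cite[Prop.~26.1(iv)]{Bauschke2010:ConvexOptimization} and no further argument, so your write-up (identifying $\boldsymbol{\ca R}=\mathrm{J}_{\bld{\ca B}}$ via $\prox_{\bar f_i}=\mathrm{J}_{\partial\bar f_i}$ and $\proj_{\bR^M_{\geq 0}}=\mathrm{J}_{N_{\bR^M_{\geq 0}}}$, then running the standard resolvent equivalence chain) is precisely the content of that cited proposition specialized to the present block structure.
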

\smallskip

Several algorithms are available in the literature to find a zero of the sum of two monotone operators, the most common being presented in \cite[Ch.~26]{bauschke:combettes}.
We opted for a variant of the preconditioned proximal-point algorithm (PPP) that allows for non self-adjoint preconditioning \cite[Eq.~4.18]{briceno:davis:f_b_half_algorithm}. It results in proximal type update with inertia, that resemble the original myopic dynamics studied. 
The resulting algorithm is here called Prox-GNWE and it is described in  \eqref{eq:Prox-GNWE 1}~--~\eqref{eq:Prox-GNWE 4} where, for all the variables introduced, we adopted the notation $x^+\coloneqq x(k+1)$ and $x = x(k)$, for the sake of compactness. It is composed of three main steps: a local gradient descend \eqref{eq:Prox-GNWE 1}, done by each agent, a gradient ascend \eqref{eq:Prox-GNWE 2}, performed by the coordinator, and finally  an inertia step \eqref{eq:Prox-GNWE 3}~--~\eqref{eq:Prox-GNWE 4}.  

\begin{figure*}[h]
\hrule
\begin{subequations}
\begin{align}\label{eq:Prox-GNWE 1}
\forall i \in \ca N \,:\quad \tilde x_i  &= \prox_{ \textstyle{\frac{\delta_i}{\delta_i+1}} \bar f_{i}} \left(  \textstyle{\frac{\delta_i}{\delta_i+1} } \left(\textstyle{\frac{1}{\delta_i} x_i  + \sum_{j=1}^N} a_{i,j} x_j - \alpha_i C_i^\top \sigma  \right) \right)\\ \label{eq:Prox-GNWE 2}
\tilde \sigma  &= \proj_{\bR_{\geq 0}^M} \left(\sigma +  \textstyle{\frac{1}{\beta}} (C \bld x -c) \right)\\\label{eq:Prox-GNWE 3}
\forall i \in \ca N \,:\quad  x_i^+  &= x_i + \gamma\: \big[  \delta_i(\tilde x_i-x_i) +\textstyle{\sum_{j=1}^Na_{i,j}}(\tilde {x}_j - x_j) - \alpha_i C_i^\top(\tilde \sigma - \sigma) \big]  \\ \label{eq:Prox-GNWE 4}
 \sigma^+ &= \sigma +  \gamma\: [\beta (\tilde \sigma -\sigma ) + C(\bld {\tilde x}-\bld x ) ]
\end{align}
\end{subequations}
\hrule
\end{figure*}

%

The parameters Prox-GNWE are chosen such that the following inequalities are satisfied:
\begin{align}
 r_i-a_{i,i} &< \delta_i^{-1} \leq  \gamma^{-1}- r_i -a_{i,i}, \quad \forall i \in \ca N, \label{cond:coeff_Alg_1} \\
\nonumber
 p_j &< \beta  \leq \gamma^{-1}- p_j,
\end{align}
where 
\begin{equation}\label{eq:radius_def}
\begin{split}
r_i\coloneqq& \textstyle{\frac{1}{2}\sum_{\substack{j=1\\ j\not = i}}^N} (a_{i,j}+a_{ji}) +(1+\alpha_i)\lVert C_i^\top \rVert_\infty,\\
p_j\coloneqq& (1+\alpha_i) \max_{j\in\{1,\dots,N\}}\lVert C_j^\top \rVert_\infty\:.
\end{split}
\end{equation}
We note that, if $\gamma$ is chosen small enough, then the right inequalities in \eqref{cond:coeff_Alg_1} always hold. On the other hand, $\gamma$ is the step size of the algorithm, thus a smaller value can affect the convergence speed. 
%
The formal derivation of these bounds is reported in Appendix~\ref{app:derivation_prox_GNWE}.

We also note that \eqref{eq:Prox-GNWE 1}~--~\eqref{eq:Prox-GNWE 4} require two communication between an agent $i$ and its neighbours one to obtain the value of $x_j(k)$ in \eqref{eq:Prox-GNWE 1} and the second to gather $\tilde x_j(k)$ for all $j\in\ca N_i$ in \eqref{eq:Prox-GNWE 3}.
Finally, we conclude the section by establishing global convergence of the sequence generated by Prox-GNWE to a GNWE of the game in \eqref{eq:unc_game_CF_CC}.

\smallskip
\begin{theorem}[Convergence Prox-GNWE]
\label{th:convergence_Prox_GNWE}
Set $\alpha_i= q_i$, for all $i\in\ca N$, with $q_i$ as in Theorem~\ref{th:convergence_synch_unc}, and choose $\delta_i$, $\beta$, $\gamma$ satisfying~\eqref{cond:coeff_Alg_1}.  Then, for any initial condition, the sequence $(\bld x(k))_{k\in\bN}$ generated by \eqref{eq:Prox-GNWE 1}~--~\eqref{eq:Prox-GNWE 4}, converges to a GNWE of the game in \eqref{eq:unc_game_CF_CC}. \hfill\QEDopen
\end{theorem}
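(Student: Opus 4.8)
The plan is to recognise the Prox-GNWE recursion \eqref{eq:Prox-GNWE 1}--\eqref{eq:Prox-GNWE 4} as an instance of the preconditioned proximal-point algorithm with non-self-adjoint preconditioning of \cite[Eq.~4.18]{briceno:davis:f_b_half_algorithm}, applied to the monotone inclusion whose solutions are the (augmented) GNWE. Set $\bld\omega := \col(\bld x,\sigma)$ and $\bld{\ca A} := \bld{\ca B} + \Id - \bld{\ca G}$, with $\bld{\ca B} = F\times N_{\bR^M_{\geq 0}}$ and $\bld{\ca G}$ as in \eqref{eq:ca_G_map}. By Lemma~\ref{lemma:fixed_point_as_zero}, $\zer(\bld{\ca A}) = \fix(\bld{\ca R}\circ\bld{\ca G})$; by Lemma~\ref{lemma:GNWE_as_fixed_point} the $\bld x$-block of every point of this set is a GNWE of \eqref{eq:unc_game_CF_CC}; and by Lemma~\ref{lemma:existence_GNWE} the set is non-empty. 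Hence it suffices to prove that $\bld\omega(k)\to\bar{\bld\omega}\in\zer(\bld{\ca A})$ for every initial condition, and that the $\bld x$-block is extracted by the first $nN$ components.

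First I would split $\bld{\ca A} = \bld{\ca M}+\bld{\ca S}$, with $\bld{\ca M}$ collecting the set-valued part $\bld{\ca B}$ (possibly augmented by a single-valued monotone term) and $\bld{\ca S}$ the remaining single-valued affine operator coming from $\Id-\bld{\ca G}$. Here $\bld{\ca M}$ is maximally monotone --- $\bld{\ca B}$ is a product of a subdifferential and a normal cone, and Standing Assumption~\ref{ass:convex_constr_set} (Slater) provides the constraint qualification --- with a separable, explicitly computable resolvent (the $\prox_{\bar f_i}$'s and the projection onto $\bR^M_{\geq 0}$), while $\bld{\ca S}$ is monotone and globally Lipschitz. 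The choice $\alpha_i=q_i$ enters precisely here: since $A$ is only row stochastic, $\Id-\bld A$ is monotone not in the Euclidean geometry but in the $\bld Q$-weighted one of Lemma~\ref{lem:proxA_is_AVG_in_H_Q} (with $\bld Q := Q\otimes I_n$); taking $\bld\Lambda=\diag(q_1,\dots,q_N)\otimes I_n$ aligns the dual-penalty block $-\bld\Lambda C^\top$ of $\bld{\ca G}$ with that weighting so that the split has the required monotonicity --- the exact counterpart of how $Q$ restored averagedness in Lemma~\ref{lem:proxA_is_AVG_in_H_Q}, and for a doubly stochastic $A$ one recovers $q\propto\1$ and the Euclidean metric.

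Next I would read off from \eqref{eq:Prox-GNWE 1}--\eqref{eq:Prox-GNWE 4} the non-self-adjoint preconditioner $\Phi = P - K$ (with $P=P^\top$, $K$ skew): block-structured with diagonal blocks governed by the parameters $\delta_i$ and $\beta$ and off-diagonal coupling blocks assembled from the weights $a_{i,j}$ and the matrices $\alpha_i C_i^\top$. A direct computation turns \eqref{eq:Prox-GNWE 1}--\eqref{eq:Prox-GNWE 2} into the resolvent (``backward'') step $\tilde{\bld\omega} = (\Phi+\bld{\ca M})^{-1}(\Phi-\bld{\ca S})\bld\omega+\mathrm{const}$ and \eqref{eq:Prox-GNWE 3}--\eqref{eq:Prox-GNWE 4} into the $\gamma$-scaled forward correction (the inertia step), i.e., exactly the iteration of \cite[Eq.~4.18]{briceno:davis:f_b_half_algorithm}. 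It then remains to verify the hypotheses of that scheme: that $P\succ 0$ --- a block Gershgorin / diagonal-dominance estimate in which the off-diagonal mass attached to agent $i$ is bounded by $r_i$ (modulo the bookkeeping of the self-loop weight $a_{i,i}$) and that of the $\sigma$-block by $p_j$, so that the left inequalities $r_i-a_{i,i}<\delta_i^{-1}$ and $p_j<\beta$ of \eqref{cond:coeff_Alg_1}, with $r_i,p_j$ as in \eqref{eq:radius_def}, make $P$ strictly diagonally dominant, hence positive definite, so $\ca H_P$ is a genuine Hilbert space --- that in $\ca H_P$ the preconditioned operators inherit the monotonicity and Lipschitz estimates required, and that the step size $\gamma$ is admissible, which is exactly what the right inequalities $\delta_i^{-1}\le\gamma^{-1}-r_i-a_{i,i}$ and $\beta\le\gamma^{-1}-p_j$ enforce. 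The cited result then yields $\bld\omega(k)\to\bar{\bld\omega}\in\zer(\bld{\ca A})$, and Lemma~\ref{lemma:GNWE_as_fixed_point} converts this into convergence of $\bld x(k)$ to a GNWE of \eqref{eq:unc_game_CF_CC}. The main obstacle I expect is the quantitative heart of this last step: pinning down the monotone/skew split of $\Id-\bld{\ca G}$ together with the precise metric in which $P\succ0$ and the monotonicity/Lipschitz estimates hold simultaneously, so that the parameter window \eqref{cond:coeff_Alg_1}--\eqref{eq:radius_def} is exactly the admissible one --- the computation carried out in Appendix~\ref{app:derivation_prox_GNWE}; everything else is a routine translation between the algorithmic and the operator-splitting forms.
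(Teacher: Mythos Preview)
Your strategy matches the paper's: cast \eqref{eq:Prox-GNWE 1}--\eqref{eq:Prox-GNWE 4} as the non-self-adjoint preconditioned proximal-point iteration of \cite[Eq.~4.18, Th.~4.2]{briceno:davis:f_b_half_algorithm} for $\bld 0\in\bld{\ca A}(\bld\varpi)$, use $\alpha_i=q_i$ so that $\Id-\bld{\ca G}$ (and hence $\bld{\ca A}=\bld{\ca B}+(\Id-\bld{\ca G})$) is maximally monotone in $\ca H_{\overline{\bld Q}}$ with $\overline{\bld Q}=\diag(\bld Q,I_M)$ (this is exactly Lemma~\ref{lemma:max_mon_of_G} in the paper), verify via Gershgorin that the symmetric part $U=(\Phi+\Phi^\top)/2$ of the preconditioner \eqref{eq:preconditioning_matrix} is positive definite under \eqref{cond:coeff_Alg_1}, and conclude through Lemmas~\ref{lemma:fixed_point_as_zero} and~\ref{lemma:GNWE_as_fixed_point}.

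The one point to adjust: the paper does \emph{not} split $\bld{\ca A}=\bld{\ca M}+\bld{\ca S}$ with a forward step on $\bld{\ca S}=\Id-\bld{\ca G}$. The backward step is the pure resolvent $\bld 0\in\Phi(\tilde{\bld\varpi}-\bld\varpi)+\bld{\ca A}\tilde{\bld\varpi}$ on the \emph{whole} $\bld{\ca A}$; the only decomposition is $\Phi=U+S$ (symmetric plus skew) of the preconditioner. What makes this resolvent decouple into the separable updates \eqref{eq:Prox-GNWE 1}--\eqref{eq:Prox-GNWE 2} is the specific choice of $\Phi$ in \eqref{eq:preconditioning_matrix}, engineered so that $\Phi+(I-\bld G)=\diag(\bld\delta^{-1}+I,\,\beta I_M)$ is block diagonal. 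A genuine forward evaluation of $\Id-\bld{\ca G}$ at the old iterate, as your formula $\tilde{\bld\omega}=(\Phi+\bld{\ca M})^{-1}(\Phi-\bld{\ca S})\bld\omega$ suggests, would produce a different recursion than \eqref{eq:Prox-GNWE 1}--\eqref{eq:Prox-GNWE 2}. Drop that detour and you recover the paper's proof verbatim.
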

\begin{proof}
See Appendix~\ref{app:proof_conv_prox_GNWE}.
\end{proof}
\smallskip

\section{Numerical simulations}
\label{sec:numerical_simulations}
\subsection{Synchronous/asynchronous Friedkin and Johnsen model}

As mentioned in Section~\ref{sec:problem_formulation}, the problem studied in this paper arises often in the field of opinion dynamics. In this first simulation, we consider the standard Friedkin and Johnsen model, introduced in \cite{friedkin:johnsen:99}. The strategy $x_i(k)$ of each player represents its opinion on $n$ independent topics at time $k$. An opinion is represented with a value between $0$ and $1$, hence $\Omega_i \coloneqq [0,1]^n$. The opinion $[x_i]_j=1$ if agent $i$ completely agrees on topic $j$, and vice versa. Each agent is stubborn with respect to its initial opinion $x_i(0)$ and $\mu_i\in[0,1]$ defines how much its opinion is bound to it, e.g., if $\mu_i=0$ the player is a \textit{follower} on the other hand if $\mu_i=1$ it is \textit{fully stubborn}. In the following, we present the evolution of the synchronous and asynchronous dynamics \eqref{eq:group_dynamics} and \eqref{eq:dynamics_ARock_unc}, respectively, where the cost function is as in \eqref{eq:FJ cost fun}. 

We considered $N=10$ agents discussing on $n=3$ topics. The communication network and the weights that each agent assigns to the neighbours are randomly drawn, with the only constraint of satisfying Assumption~\ref{ass:row_stoch}. The initial condition is also a random vector  $\bld x(0)\in[0,1]^{nN}$. Half of the players are somehow stubborn $\mu = 0.5$ and the remaining are more incline to follow the opinion of the others, i.e., $\mu = 0.1$. For the asynchronous dynamics, we consider three scenarios.
\begin{enumerate}
\item[(A1)] There is no delay in the information and the probability of update is uniform between the players, hence $\overline{\varphi} = 0$  and $p_{\min} = 1/N = 0.1$.
\item[(A2)] There is no delayed information and the update probability is defined accordingly to a random probability vector $p$, so  $\overline{\varphi} = 0$ and $p_{\min} = 0.0191 $. In particular, we consider the case of a very skewed distribution to highlight the differences with respect to (A1) and (A3). 
\item[(A3)] We consider an uniform probability of update and a maximum delay of two time instants, i.e., $p_{\min} = 1/N = 0.1$ and $\overline{\varphi} = 2$. The values of the maximum delay is chosen in order to fulfil condition \eqref{eq:max_delay_bound}.
\item[(A4)] In this case, we consider the same setup as in (A3) but in this case the maximum delay is $\overline{\varphi} = 50$. Notice that the theoretical results do not guarantee the convergence of these dynamics.   
\end{enumerate}
\begin{figure}[ht]
	\centering
	\includegraphics[trim= 30 0 130 40,clip ,width=\columnwidth]{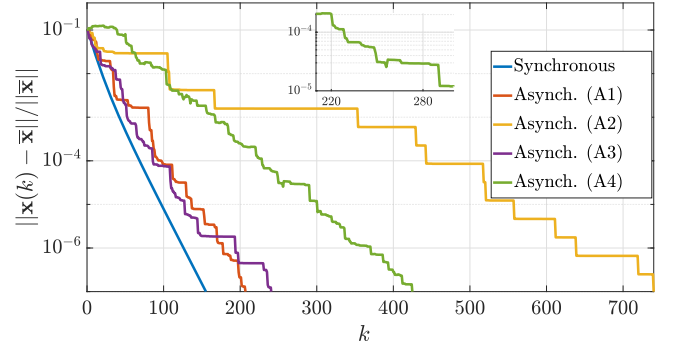}
	\caption{ Comparison between the convergence of the Friedkin and Johnsen model subject to synchronous and asynchronous update. In the latter case the different type of updates (A1), (A2), (A3) and (A4) are considered. For a fair comparison we have assumed that the synchronous dynamics update once every $N$ time instants.}
	\label{fig:FJ_convergence}
\end{figure}
In Figure~\ref{fig:FJ_convergence} it can be seen how the convergence of the synchronous dynamics and the asynchronous ones, in the conditions (A2), (A3), is similar. The delay affects the convergence speed if it surpass the theoretical upper bound in \eqref{eq:max_delay_bound}, as in (A4). It is worthy to notice that, even a big delay, does not seem to lead to instability, while it can produce a sequence that does not converge monotonically to the equilibrium, see the zoom in Fig.~\ref{fig:FJ_convergence}.   
The slowest convergence is obtained in the case of a non uniform probability of update, i.e., (A2). From the simulation it seems that the uniform update probability produces always the fastest convergence.
In fact, the presence of agents updating with a low probability produces some plateau in the convergence of the dynamics.


\subsection{Time-varying DeGroot model with bounded confidence}

In this section, we consider a modified time varying version of the DeGroot model \cite{degroot:1974:reaching_consensus}, where each agent is willing to change its opinion up to a certain extent. This model can describe the presence of extremists in a discussion. 
In the DeGroot model, the cost function of the game in~ \eqref{eq:unc_game_CF} $\bar f_i$, reduces to $\bar f_i = \iota_{\Omega_i}$ for all $i\in \ca N$. We consider $N=8$ agents discussing on one topic, hence the state $x_i$ is a scalar. 
The agents belong to three different categories, that we call: \textit{positive extremists, negative extremists and neutralists}.  
This classification is based on the local feasible decision set. 
\begin{itemize}
\item \textit{Positive (negative) extremists}: the agents agree (disagree) with the topic and are not open to drastically change their opinion, i.e., $\Omega_i = [0.75,1]$ ($\Omega_i = [0,0.25]$).
\item \textit{Neutralists}: the agents do not have a strong opinion on the topic, so their opinions vary from $0$ to $1$, i.e., $\Omega_i=[0,1]$.
\end{itemize}
Our setup is composed of two negative extremists (agent 1 and 2), four neutralists (agents 3-6) and two positive extremists (agents 6 and 8). We assume that, at each time instant, the agents can switch between three different communication network topologies, depicted in Figure~\ref{fig:graph_1}. 
\begin{figure}[ht]
	\centering
	\includegraphics[trim= 20 20 20 20,clip,width=0.3\columnwidth]{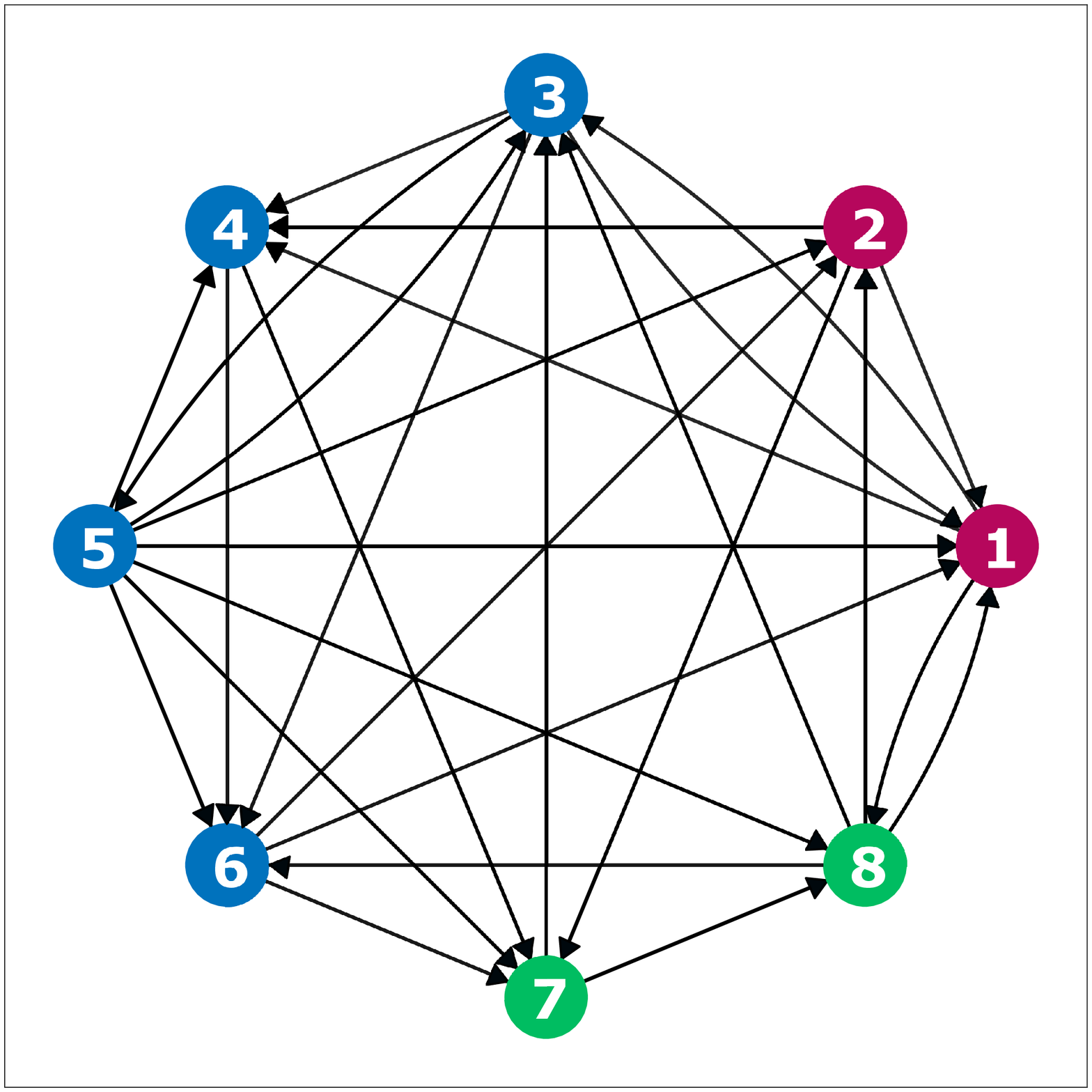}\:
	\includegraphics[trim= 20 20 20 20,clip,width=0.3\columnwidth]{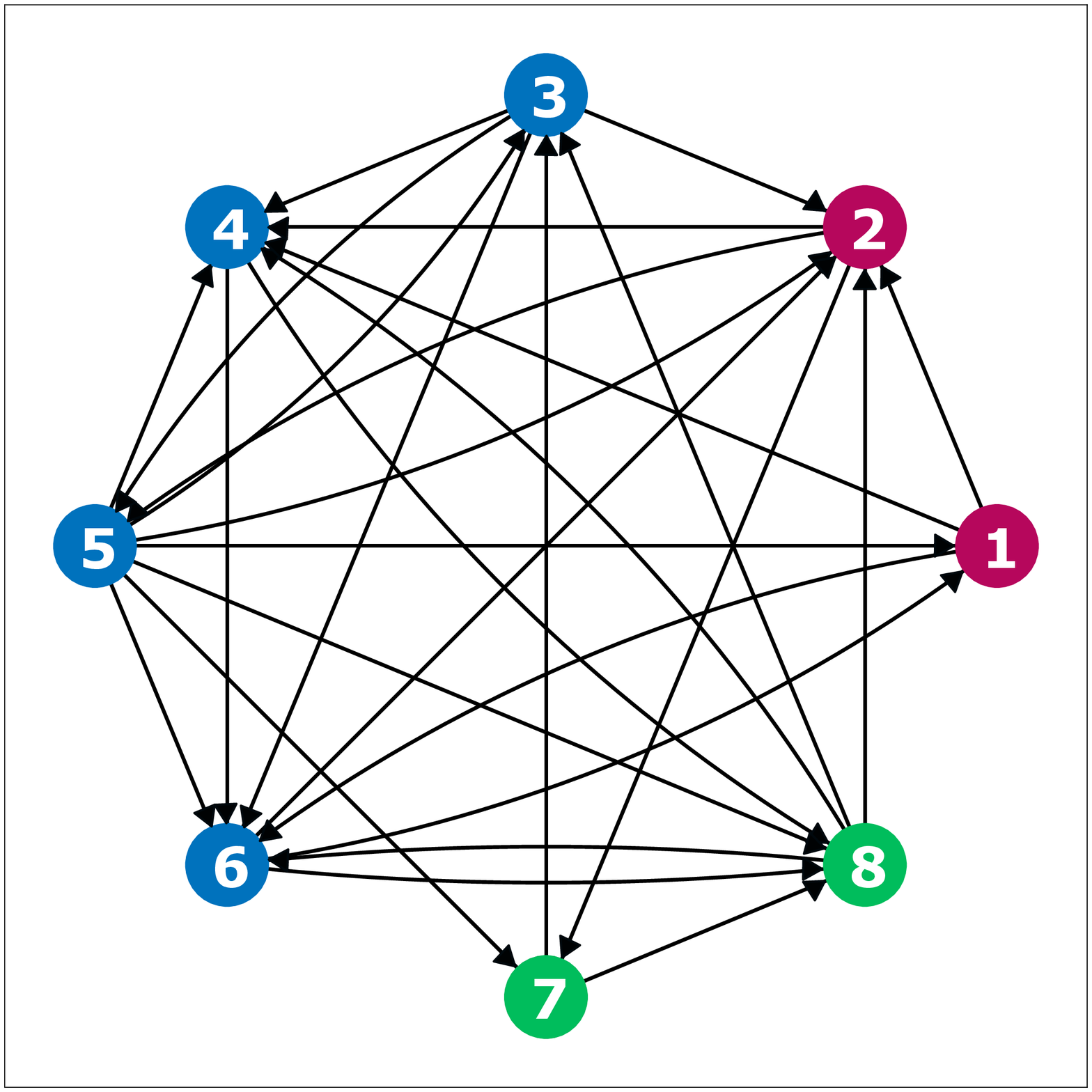}\:
	\includegraphics[trim= 20 20 20 20,clip,width=0.3\columnwidth]{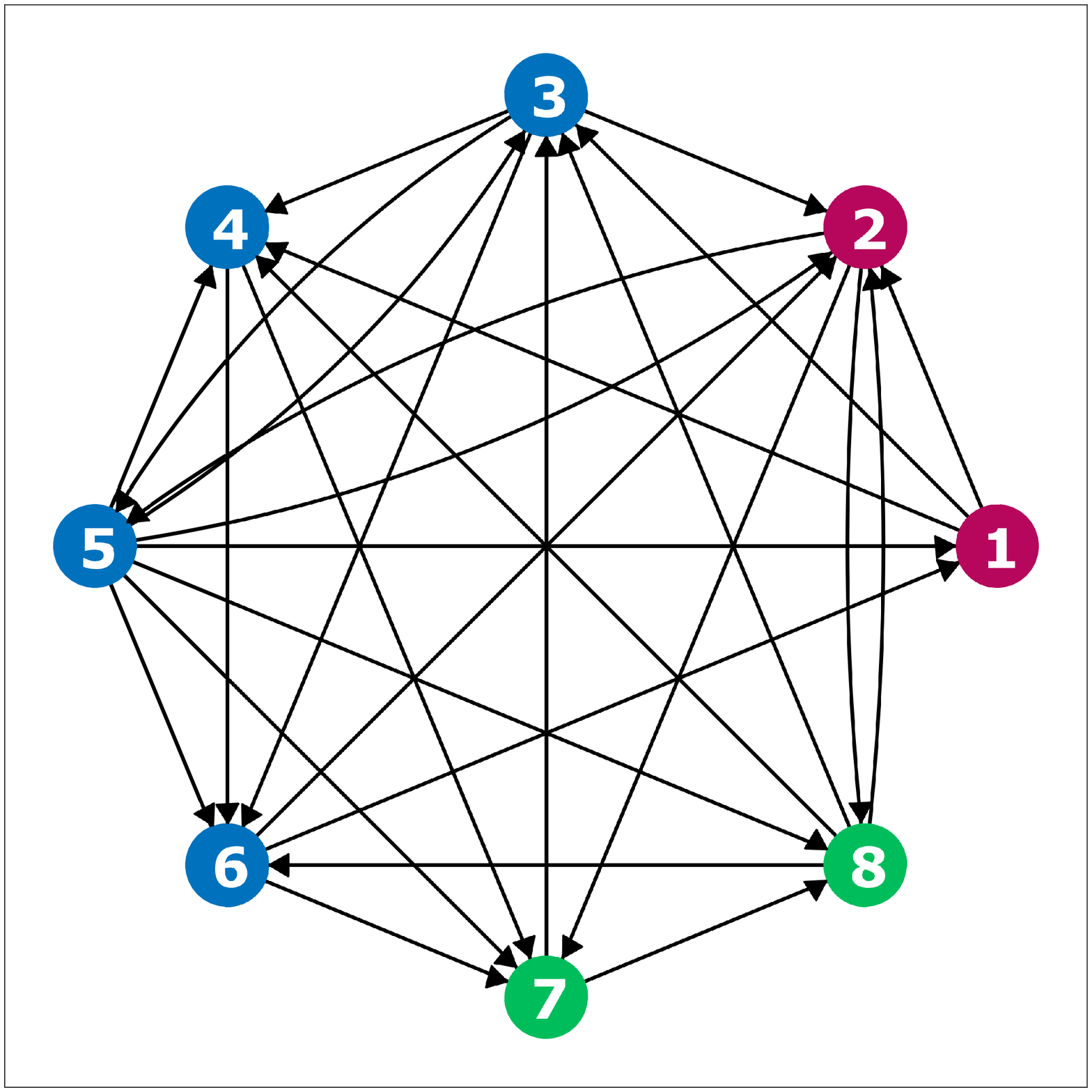}
	\caption{The three different communication networks  between which the agents switch.}
	\label{fig:graph_1}
\end{figure}
The dynamics in \eqref{eq:dynamics_tv_transf}  can be rewritten, for a single agent $i\in\ca N$, as
\begin{equation*}
\label{eq:TV_degroot_model}
x_i(k+1) = \proj_{\Omega_i} \big((1-q_i(k)) x_i(k) + q_i(k)[A(k)]_i \bld x(k) \big).
\end{equation*}
From this formulation, it is clear how the modification of the original dynamics \eqref{eq:dynamics_tv_A} results into  an inertia of the players to change their opinion.

We have proven numerically that Assumption~\ref{ass:exist_p-NWE} is satisfied and that the unique p-NWE of the game is 
$$\bar{\bld x} = \big[0.25,\:0.25,\:0.44,\:0.58,\:0.49,\:0.41,\:0.75,\:0.75 \big]^\top\,.$$ 
In Figure~\ref{fig:FJ_TV_opinion} the evolution players' opinion are reported, as expected, the dynamics converge to the p-NWE, i.e., $\bar{\bld x}$.
\begin{figure}[ht]
	\centering
	\includegraphics[trim=70 0 100 20 ,clip,width=\columnwidth]{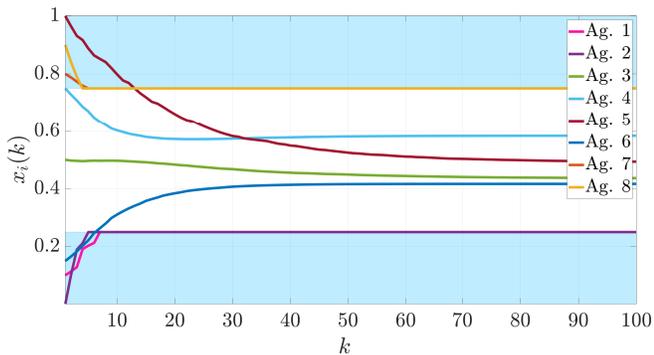}
	\caption{The evolution of the opinion $x_i(k)$ of each agent $i\in\ca N$. The light blue regions represents the local feasible sets of positive and negative extremists.}
	\label{fig:FJ_TV_opinion}
\end{figure}
%
%

\subsection{Distributed LASSO algorithm}
\label{sec:distr_lasso_algorithm}

The following simulation applies Prox-GNWE to solve a problem of distributed model fitting. 
Namely, we develop a distributed implementation of the LASSO algorithm.

First, we introduce the problem by describing the classical centralized formulation, for a complete description see \cite{tibshirani:1996:lasso}. 
Consider a large number of  data $\tilde y\in\bR^d$, where $d=500$, affected by an additive normal Gaussian noise; the true model that generates them is assumed linear and in the form $B x = y	$, where $B\in \bR^{d\times n}$ is the covariate matrix; $x\in\bR^n$ the real parameters of the model, with $n = 6$; and $y$ the model outputs not affected by the noise. Hereafter, the matrix $B$ is assumed to be known. The algorithm aims to compute the parameters estimation $\hat x\in\bR^n$ that minimizes the cost function 
$$f(\hat x)\coloneqq \lVert B \hat  x - \tilde y \rVert^2_2+\lVert \hat x \rVert_1 \, .$$
We focus now on the distributed counterpart of the LASSO, already analized in the literature, e.g.,  see \cite{matteos:distribiuted_linear_regression}. 
In our setup, we assume $N=5$ agents, each one measuring a subset $\tilde y_i\in\bR^{100}$ of $\tilde y$, and communicating via a network described by the adjacency matrix $\bld A$, satisfying Standing Assumption~\ref{ass:row_stoch}. Accordingly, each agent $i$ knows the local covariate matrix $B_i\in\bR^{200\times6}$ associated to the measurements $\tilde y_i\in\bR^{200}$. The noise affecting the data set of each agent $i$, is assumed Gaussian with zero mean and  variance $\sigma_i$. It models the different precision of each agent in measuring the data. The variance $\sigma_i$ is randomly drawn between $[0,\sigma_M]$. In the following, we propose simulations for four different values of $\sigma_M$, namely $0.5\%$, $1.25\%$, $2.5\%$ and $5\%$ of the maximum value of $y$, i.e., the values of $\sigma_M \coloneqq \{1.13 ,\, 2.8  ,\, 5.6  ,\, 11.3\}$. 

We cast the problem as the game in \eqref{eq:unc_game_CF_CC}, where, for all agent $i\in\ca N$, the local cost function is $f_i(\hat x_i)\coloneqq \lVert B_i \hat  x_i - \tilde y_i \rVert^2_2+\lVert \hat  x_i \rVert_1$, where $\hat x_i$ is the local estimation of the model parameters. On he other hand, let $\bld{ \hat x} = \col ((\hat x_i)_{i\in\ca N})$, then the aggregative term $\lVert  \hat  x_i - \bld A \hat{\bld x} \rVert^2$ leads to a better fit of the model, if the weights in the matrix $\bld A$ are chosen inversely proportional to the noise variance of each agent. 
To enforce the consensus between all the local estimations $\hat x_i$, i.e., $\hat x^* \coloneqq \hat x_i(\infty)$ for all $i\in\ca N$, we impose the constraint $\bld L \bld{ \hat x} = 0$, where  $\bld L \coloneqq L\otimes I_n$ and $L$ is the Laplacian matrix associated to the communication network. The resulting game can be solved via Prox-GNWE, that provides a distributed version of the LASSO algorithm. In particular, it generates a sequence $(\bld{ \hat x}(k))_{k\in\bN}$ converging asymptotically to a solution of the model fitting problem.

In Figure~\ref{fig:LASSO_convergence}, it is shown the trajectory of $\frac{\lVert \bld{ \hat x}(k+1) - \bld{ \hat x}(k)\rVert}{\lVert \bld{ \hat x}(k+1)  \rVert}$ where $\bld{ \hat x}(k)$ is the collective vectors of the parameters estimations obtained via Prox-GNWE at iteration $k$; this quantity describes the relative variation of the estimations between two iterations of the algorithms. Notice that the sequence converges asymptotically, but, as expected, a greater noise variance $\sigma_M$ leads to a slower rate of convergence.
\begin{figure}[ht]
	\centering  
	\includegraphics[trim= 0 0 0 0 ,clip,width=\columnwidth]{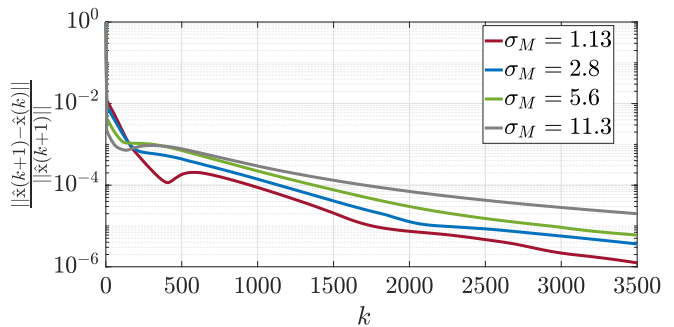}
	\caption{Trajectory of  $\frac{\lVert \bld{ \hat x}(k+1) - \bld{ \hat x}(k)\rVert}{\lVert \bld{ \hat x}(k+1)  \rVert}$ generated via Prox-GNWE. The different maximum noise variances $\sigma_M$ adopted identify the different curves.}
	\label{fig:LASSO_convergence}
\end{figure}
The constraint violation, i.e., $\lVert \bld{L\bld{ \hat x}(k)} \rVert$, at every iteration $k$ of the algorithm are reported in Figure~\ref{fig:LASSO_constraints}; the constraints fulfilment is achieved asymptotically. Also in this case, the use of more noisy measurements $\tilde y$ leads to a slower asymptotic convergence of the local estimations $\hat x_i$ to consensus.
\begin{figure}[ht]
	\centering
	\includegraphics[trim= 0 0 0 0 ,clip,width=\columnwidth]{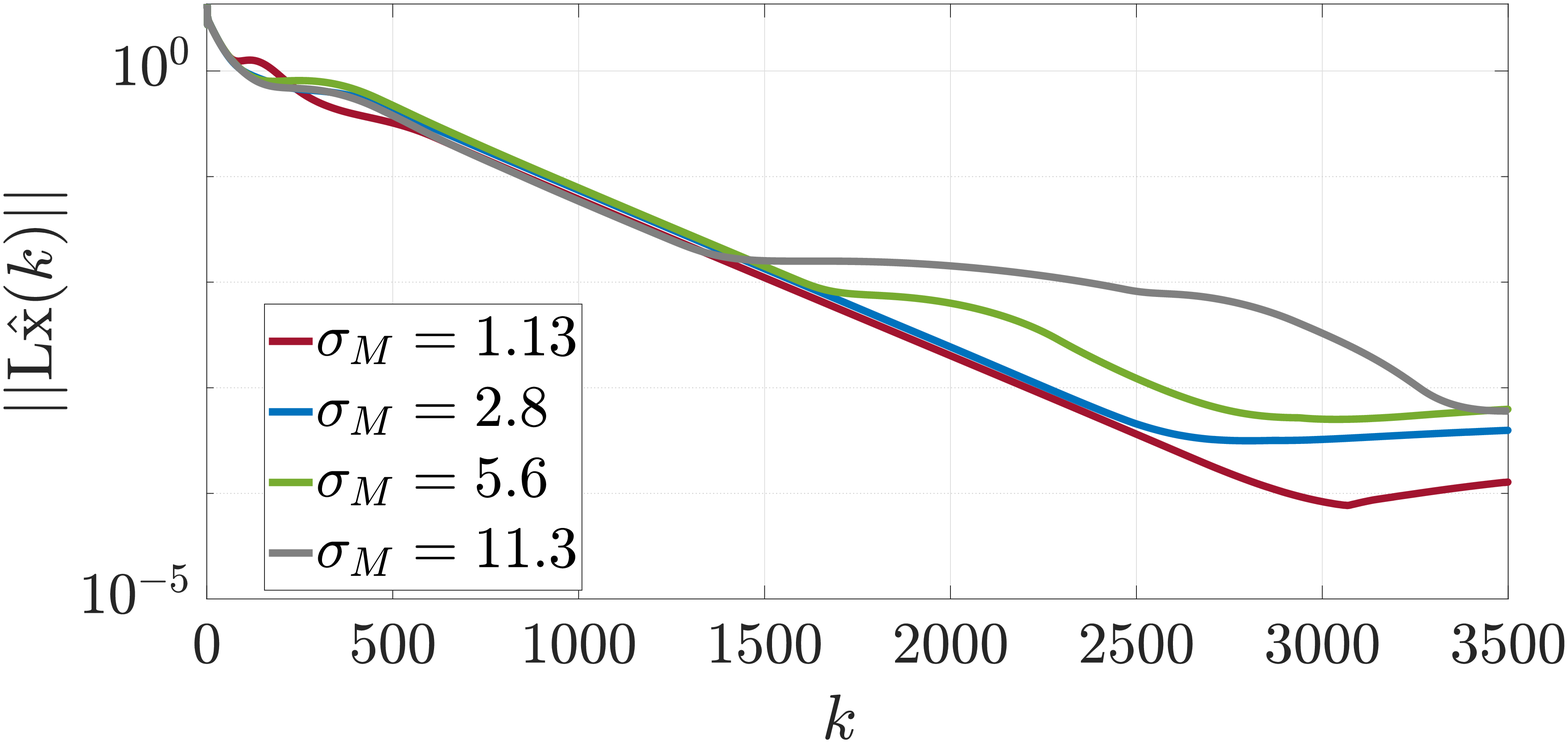}
	\caption{Constraint violation $\lVert \bld{L\bld{ \hat x}(k)} \rVert$ of the collective vector $\hat{\bld x}(k)$ for each iteration $k$. The different maximum noise variances $\sigma_M$ adopted identify the different curves.}
	\label{fig:LASSO_constraints}
\end{figure}

Finally, for every iteration $k$, consider the output of the estimated model $\hat{y}_i(k)=B\hat{ x}_i(k)$, for all $i\in\ca N$; the average of the mean squared values (MSE) of all the local estimations as is computed as $\text{MSE}\coloneqq \frac{1}{N} \sum_{i\in\ca N} \lVert \hat{y}_i(k) - y \rVert^2 $. This value provide an index of the quality of the estimation. In Figure~\ref{fig:LASSO_estimation}, we report the normalized MSE for different values of $\sigma_M$. The increment of $\sigma_M$ causes a slower decrement of the MSE, i.e., a higher number of iterations to achieve a good parameters estimations. On the other hand, if a small noise affect the data, already after $1000$ iterations the parameters obtained perform almost as good as the final ones. The study of this curves emphasises the trade off between a good parameters estimations and a fast converging algorithm.
\begin{figure}[ht]
	\includegraphics[trim= 0 0 0 0,clip,width=\columnwidth]{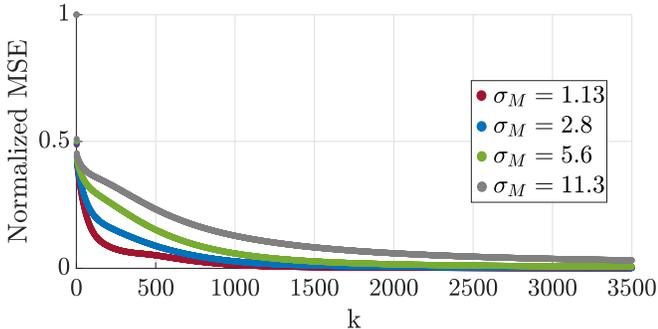}
	\caption{Trajectory of the normalized average  MSE of the estimated model with respect to the real one, for every iteration $k\in\bN$. The different maximum noise variances $\sigma_M$ adopted identify the different curves.}
	\label{fig:LASSO_estimation}
\end{figure}

\section{Conclusion and outlook}
\label{sec:conclusion}
\subsection{Conclusion}
For the class of multi-agent games, proximal type dynamics converge, provided that they are subject only to local constraints and the communication network is strongly connected. We proved that their asynchronous counterparts also converge, and that they are robust to bounded delayed information. If each agent has the possibility to arbitrarily choose the communications weights with its neighbours, then proximal dynamics converge even if the communication network varies over time.  
For multi-agent games subject to both local and coupling constraints, the proximal type dynamics fail to converge, so we provide a converging iterative algorithm (Prox-GNWE) converging to a \textit{normalized-Extended Network Equilibrium} of the game. When the problem at hand can be recast as a proximal type dynamics, the results achieved directly provide the solution of the game, as shown in the numerical examples proposed. 
\subsection{Outlook}
Although we have analyzed several different setups, this topic still presents unsolved issues that can be explored in future research. In the following, we highlight the ones that we think are the most compelling.

The implementation of the asynchronous version of Prox-GNWE presents several technical issues. In fact, the operator in \eqref{eq:PPP_update_rule}, describing the synchronous dynamics, is \textit{quasi}-NE. This is a weaker property than NE, thus it does not allow us to extend the approach used in Section~\ref{sec:asynchrnous_unconstr} to create the asynchronous version of the constrained dynamics, see \cite[Th.~1]{Peng-Yan-Xu:2016:ARock}.
In the literature, there are no asynchronous algorithms providing convergence for \textit{quasi}-NE operators, that translate to implementable iterative algorithms, thus an \textit{ad hoc} solution has to be developed. It is interesting to notice, that in the particular case in which the game is an ordinal potential game, we can easily show that the asynchronous constrained dynamics converge asymptotically.

In the case of a time-varying graph, see Section~\ref{sec:time_var_unconstr}, we have supposed that at every time instant the communication networks satisfies Standing Assumption~\ref{ass:row_stoch}. It can be interesting to weak this assumption by considering jointly connected communication networks, as done in \cite{fullmer:morse:2018:common_fixed_point_finite_family_paracontraction}.

One can notice that, if the adjacency matrix is irreducible, then it is also AVG in some weighted space, see
\cite[Prop.~1]{belgioioso:fabiani:blanchini:grammatico:18css}. The characterization of this space may lead to an extension of the results presented in this paper, as long as the matrix weighting the space preserves the distributed nature of the dynamics.

\appendix
\subsection{Proof of Theorem \ref{th:convergence_synch_unc}}
\label{app:proof_th_1}
Firstly, we introduce the following auxiliary lemma.  

\smallskip
\begin{lemma}
\label{lemma:diagonal_Q_matrix}
Let $P\in\bR^{N\times N}$, $[P]_{ij}\coloneqq  p_{ij}$ be a non negative matrix that satisfies 
\begin{align}
&P \1_N = \1_N, \ \iff \textstyle{\sum_{j\in\ca N}} p_{ij}=1 \label{A1}\\
&q^{\top} P \leq q^{\top}\ \iff \textstyle{\sum_{i\in\ca N}} q_i p_{ij} \leq q_j \label{A2},
\end{align}
where  $q\in\bR^N_{>0}$. Then $P$ is NE in $\ca H_Q$, with $Q=\mathrm{diag}(q)$.   \hfill\QEDopen
\end{lemma}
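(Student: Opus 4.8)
Since $P$ is a \emph{linear} operator, proving that $P$ is NE (i.e.\ $1$-AVG) in $\ca H_Q$ amounts to showing $\lVert Px - Py\rVert_Q \le \lVert x-y\rVert_Q$ for all $x,y\in\bR^N$, which by linearity reduces to the single bound
\begin{equation}
\lVert Pz\rVert_Q^2 \le \lVert z\rVert_Q^2\qquad\text{for all } z\in\bR^N ,
\end{equation}
obtained by setting $z := x-y$; equivalently, the $Q$-induced norm satisfies $\lVert P\rVert_Q\le 1$. So the whole proof is the verification of this one inequality.

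The plan is to expand the weighted norm, apply Jensen's inequality row by row using \eqref{A1}, then interchange the order of summation and invoke \eqref{A2}. Explicitly, writing $z=\col(z_1,\dots,z_N)$ and $(Pz)_i=\sum_{j\in\ca N}p_{ij}z_j$, we have
\begin{equation}
\lVert Pz\rVert_Q^2 \;=\; \sum_{i\in\ca N} q_i\Bigl(\,\sum_{j\in\ca N}p_{ij}z_j\Bigr)^{\!2}.
\end{equation}
Because $p_{ij}\ge 0$ and, by \eqref{A1}, $\sum_{j\in\ca N}p_{ij}=1$, each inner sum is a convex combination of the $z_j$'s, so convexity of $t\mapsto t^2$ (Jensen) yields $\bigl(\sum_j p_{ij}z_j\bigr)^2 \le \sum_j p_{ij}z_j^2$. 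Substituting and swapping the summation order gives
\begin{equation}
\lVert Pz\rVert_Q^2 \;\le\; \sum_{i\in\ca N} q_i \sum_{j\in\ca N} p_{ij}z_j^2
\;=\; \sum_{j\in\ca N} z_j^2 \Bigl(\,\sum_{i\in\ca N} q_i p_{ij}\Bigr)
\;=\; \sum_{j\in\ca N} z_j^2\,(q^\top P)_j .
\end{equation}
Finally, \eqref{A2} states $(q^\top P)_j \le q_j$ for every $j$, and since $z_j^2\ge 0$ we may use this termwise to conclude $\lVert Pz\rVert_Q^2 \le \sum_{j\in\ca N} q_j z_j^2 = \lVert z\rVert_Q^2$, which is the claim.

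There is no real obstacle here: the argument is a two-line Jensen-plus-Fubini computation. The only points that require a moment's care are (a) checking that Jensen is legitimately applicable, which needs precisely the nonnegativity of the $p_{ij}$ together with the row-sum condition \eqref{A1}; and (b) that the sub-invariance inequality \eqref{A2} can be applied coordinatewise inside the sum, which is valid because the coefficients $z_j^2$ are nonnegative. It is also worth remarking that $q>0$ is what guarantees $\lVert\cdot\rVert_Q$ is a genuine norm (so that $\ca H_Q$ is a well-defined Hilbert space), and that if $q^\top P = q^\top$ with equality then the computation shows $P$ is in fact an isometry restricted to no proper subspace—but for the lemma only the inequality is needed.
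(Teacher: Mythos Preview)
Your proof is correct and follows essentially the same route as the paper's: both reduce to showing $x^\top P^\top Q P x \le x^\top Q x$, use the row-stochasticity \eqref{A1} for the convexity step and the sub-invariance \eqref{A2} to close the estimate. The only cosmetic difference is that where you invoke Jensen's inequality by name, the paper writes out the underlying algebraic identity $\bigl(\sum_j p_{ij}x_j\bigr)^2-\sum_j p_{ij}x_j^2=-\sum_j\sum_{k<j}p_{ij}p_{ik}(x_j-x_k)^2$ explicitly, and the order in which \eqref{A1} and \eqref{A2} are applied is swapped; neither changes the substance of the argument.
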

\smallskip
\begin{proof}
To ease the notation we adopt $\sum_{j} (\cdot) :=\sum_{j\in\ca N} (\cdot)$ and $\sum_{j}\sum_{k<j} (\cdot) :=\sum_{j\in\ca N} \sum_{k\in (1,\dots,j-1)} (\cdot)$ in the following.

Next, we exploit (\ref{A2}) to compute
\begin{align}
\nonumber
x^{\top} P^{\top} Q P x - x^{\top} &P x
= \textstyle{\sum_{i}} q_i \left( \textstyle{\sum_{j}} p_{ij} x_j\right)^2 - \textstyle{\sum_{j}} q_j x_j^2\\\nonumber
 & \leq\textstyle{\sum_{i}} q_i \left( \textstyle{\sum_{j}} p_{ij} x_j\right)^2 - \textstyle{\sum_{j}} \textstyle{\sum_{i}} q_i p_{ij} x_j^2 \\ \label{eq:proof_A_NE_in_H_Q}
&=\textstyle{\sum_{i}} q_i \left(\left( \textstyle{\sum_{j}} p_{ij} x_j\right)^2 - \textstyle{\sum_{j}} p_{ij} x_j^2\right)\:.
\end{align}
Notice that for all $i\in\ca N$ it holds 
$$\textstyle{\sum_{j}} p_{ij} x_j^2=\textstyle{\sum_{j}} p_{ij}(\textstyle{\sum_{k}} p_{ik}) x_j^2\:,$$
furthermore this implies that   
\begin{equation}
\label{eq:proof_A_NE_in_H_Q2}
\left( \textstyle{\sum_{j}} p_{ij} x_j\right)^2 - \textstyle{\sum_{j}} p_{ij} x_j^2 = - \textstyle{\sum_{j}\sum_{k<j} } p_{ij}p_{ik} (x_j - x_k)^2
\end{equation}
Since the matrix $P$ is supposed non negative and from \eqref{eq:proof_A_NE_in_H_Q} and \eqref{eq:proof_A_NE_in_H_Q2}, it follows that 
$$x^{\top} P^{\top} Q P x - x^{\top} Q x \leq  - \textstyle{\sum_{j}\sum_{k<j} } p_{ij}p_{ik} (x_j - x_k)^2\leq 0 $$
Therefore, $P$ is NE in $\ca H_Q$.
\end{proof}

\subsection*{Proof of Lemma \ref{lem:proxA_is_AVG_in_H_Q} }
\noindent
(i) From Standing Assumption~\ref{ass:row_stoch}, $ A$ is marginally stable, with no eigenvalues on the boundary of the unit disk but semi-simple eigenvalues at $1$. It can be rewrite as $ A = (1-\eta)\textrm{Id} + \eta   B$ with $\eta > 1-\underline{a}$, hence $B\geq 0$ and is row stochastic. The graph associated to $B$ has the same edges of the one of $A$, so it is strongly connected and consequently $ B$ is irreducible. The PF theorem for irreducible matrix ensures the existence of a vector $q$ satisfying \eqref{A2} for $B$. Therefore, Lemma~\ref{lemma:diagonal_Q_matrix} can be applied to $B$, implying that $B$ is NE in $\ca H_{ Q}$ where $ Q=\diag(q)\succ 0$.
By construction, this implies that the linear operator $A$ is $\eta$--AVG in the same space. This directly implies that $\bld A=A\otimes I_n$ is $\eta$--AVG in $\ca H_{\bld Q}$, with $\bld Q= Q \otimes I_n$.

\smallskip
(ii) From point (i), we know that $A$ is $\eta$--AVG in the space $\ca H_Q$. 
Define $q_i:=[Q]_{ii}$, since $\prox_{f_i}$ is FNE in $\ca H_{I_n}$ , it holds that for all $ x, y \in\bR^{n}$ and for all $i\in\ca N$, 
\begin{align}\nonumber
\lVert \prox_{\bar f_i}(x&) - \prox_{\bar f_i}(y) \rVert_{I_n} \leq \lVert x - y \rVert_{I_n}  \\ \nonumber
&-\lVert x-y-\prox_{\bar f_i}(x) +\prox_{\bar f_i}(y)  \rVert_{I_n} \\ \label{eq:prox_fne_in_HQ}
\lVert \prox_{\bar f_i}(x&) - \prox_{\bar f_i}(y) \rVert_{q_iI_n} \leq \lVert x - y \rVert_{q_iI_n}  \\ 
&-\lVert x-y- \prox_{\bar f_i}(x) +\prox_{\bar f_i}(y) \rVert_{q_iI_n}\,.\nonumber
\end{align}
Grouping together \eqref{eq:prox_fne_in_HQ}, for every the $i\in\ca N$, leads to 
\begin{equation}
\label{eq:group_prox_fne_in_HQ}
\begin{split}
\lVert \bld{\prox_{f}(x) }- &\bld{\prox_{f}(y)} \rVert_{\bld Q} \leq \lVert \bld x - \bld y \rVert_{\bld Q} - \\
&-\lVert \bld x-\bld y- \bld{\prox_f(x)} +\bld{\prox_f(y)} \rVert_{\bld Q}\,, \\
\end{split}
\end{equation}
for all $\bld{x},\bld y\in\bR^{nN}$, hence $\bld{\prox_f}(\cdot)$ is FNE in $\ca H_{\bld Q}$.
Thus, by \cite[Prop.~2.4]{Yamada2014:Composition_AVG_maps}, the composition $\boldsymbol{\textrm{prox}}_{\boldsymbol{f}} \circ \boldsymbol{A} $ is AVG in $\mathcal{H}_{ \boldsymbol{ Q}  }$ with constant $\textstyle{\frac{1}{2-\eta}}$.

\medskip
Finally, if the matrix $A$ is doubly stochastic, then its left PF eigenvector is $\1$. Applying the result just attained in (ii), we conclude that $\boldsymbol{\textrm{prox}}_{\boldsymbol{f}} \circ \boldsymbol{A}$ is $\eta$--AVG in $\ca H_I$. 
\hfill\QED

\subsection*{Proof of Theorem \ref{th:convergence_synch_unc} }
From Lemma~\ref{lem:proxA_is_AVG_in_H_Q}(ii), we know that the operator $\boldsymbol{\textrm{prox}}_{\boldsymbol{f}} \circ \boldsymbol{A} $ is AVG in $\mathcal{H}_{ \boldsymbol{ Q}  }$ with constant $\textstyle{\frac{1}{2-\eta}}$ and $\eta>1-\underline a$. 
Therefore the convergence proof of the Banach iteration in \eqref{eq:group_dynamics} is completed invoking \cite[Prop.~5.14]{Bauschke2010:ConvexOptimization}.
\hfill $\blacksquare$ 

\subsection*{Proof of Corollary~\ref{cor:krasno_conv}}
The convergence follows directly  from \cite[Prop.~5.15]{Bauschke2010:ConvexOptimization} and the fact that the considered $\bld A$ is nonexpansive in the space $\ca H_{\bld Q}$,  by Lemma~\ref{lemma:diagonal_Q_matrix}.
\hfill\QED

\subsection{Proof of Section~\ref{sec:asynchrnous_unconstr}}
\label{app:proof_th_asynch_un}
Since dynamics \eqref{eq:dynamics_ARock_unc_psi} are a particular case to the ones in \eqref{eq:dynamics_ARock_unc}, we first prove Theorem~\ref{th:convergence_asynch} and successively derive the proof of Theorem~\ref{th:convergence_asynch_psi1} exploiting a similar reasoning.

\subsection*{Proof of Theorem~\ref{th:convergence_asynch}}
From Theorem~\ref{th:convergence_synch_unc}, we know that the operator $T:= \bld{\prox_f\circ A}$ is $\eta$-AVG in the space $\ca H_{\bld Q}$ with $\eta\in (1-\underline{a},1)$. Therefore, it can be written as $T= (1-\eta)\Id + \eta \overline{T}$ where $\overline{T}$ is a suitable NE operator in $\ca H_{\bld Q}$. Notice that  $\fix(\overline{T}) =\fix(T)$. Substituting this formulation of  $T$ in \eqref{eq:dynamics_ARock_unc_psi} leads to 
\begin{equation}
\label{eq:dyn_asynch_proof}
\boldsymbol x(k+1) = \boldsymbol x(k) + \psi_k \eta \zeta_k  \big( \overline{T} - \Id \big) \hat{\boldsymbol x}(k) \,.
\end{equation} 

Since $\overline{T}$ is NE in $\ca H_{\bld Q}$, \cite[Lemma~13~and~14]{Peng-Yan-Xu:2016:ARock} can be applied to the dynamics in \eqref{eq:dyn_asynch_proof}. Therefore, if we chose $\psi_k\in\big( 0,\frac{Np_{\min}}{(2\overline \varphi \sqrt{p_{\min}} +1) (1-\underline{a})}\big)$, the sequence generated by the dynamics in \eqref{eq:dyn_asynch_proof} are bounded and converge almost surely to a point $\overline{\bld x}\in\fix(\overline{T})=\fix(\bld{\prox_f\circ A})$. The proof is completeds recalling that the set of fixed points of $\bld{\prox_f\circ A}$ coincides with the one of NWE, Remark~\ref{rem:NWE_as_fix}.  \hfill\QED

\subsection*{Proof of Theorem~\ref{th:convergence_asynch_psi1}}
The dynamics \eqref{eq:dynamics_ARock_unc} coincide with \eqref{eq:dynamics_ARock_unc_psi}, when $\psi_k=1$ for all $k\in\bN$, therefore if $\frac{Np_{\min}}{(2\overline \varphi \sqrt{p_{\min}} +1) (1-\underline{a})} > 1$, then the convergence is guaranteed from Theorem~\ref{th:convergence_asynch}. From easy computation it can be seen that this condition is equivalent to \eqref{eq:max_delay_bound}, and this conclude the proof. \hfill\QED

\subsection{Proofs of Section~\ref{sec:time_var_unconstr}}
\label{app:tv_theorems}

Before presenting the proof of Theorem~\ref{th:convergence_mod_tv_dyn}, let us introduce two preliminary lemmas. The first proposes a transformation to construct a doubly stochastic matrix from a general row stochastic matrix.
\smallskip
\begin{lemma}
\label{lem:rstoch_in_dstoch}
Let $P\coloneqq p\in \bR^{N\times N}$ and $[P]_{ij}=p_{ij}$ be   a non negative, row stochastic matrix. If there exists a vector $w\in\bR^N_{\geq 0}$ such that $w^\top P = w^\top$, then the matrix 
\begin{equation}
\label{eq:A_transformation}
\overline{P}:= I + \mu \:\mathrm{diag}(w) (P-I) ,
\end{equation}
where $0\leq \mu \leq \frac{1}{\max_{i\in\ca N}{(1-p_{ii})w_i}}$, is non negative and doubly stochastic.
If $\mu < \frac{1}{\max_{i\in\ca N}{(1-p_{ii})w_i}}$, then the diagonal elements of $\overline P$ are positive.
 \hfill\QEDopen
\end{lemma}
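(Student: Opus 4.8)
The plan is to verify the three claimed properties of $\overline P := I + \mu\,\mathrm{diag}(w)(P-I)$ in turn: nonnegativity, row stochasticity, column stochasticity, and positivity of the diagonal under the strict bound on $\mu$. First I would write $\overline P$ entrywise. For $i\neq j$ one has $[\overline P]_{ij} = \mu\, w_i\, p_{ij} \geq 0$ since $w_i\geq 0$, $p_{ij}\geq 0$, $\mu\geq 0$. For the diagonal, $[\overline P]_{ii} = 1 + \mu\, w_i(p_{ii}-1) = 1 - \mu\, w_i(1-p_{ii})$, which is nonnegative precisely when $\mu\, w_i(1-p_{ii})\leq 1$ for every $i$, i.e. when $\mu \leq 1/\max_{i\in\ca N} (1-p_{ii})w_i$; this is exactly the stated range, and the strict inequality gives $[\overline P]_{ii}>0$, establishing the last claim.

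Next I would check row stochasticity: since $P\1_N = \1_N$, we get $(P-I)\1_N = \0$, hence $\overline P\1_N = \1_N + \mu\,\mathrm{diag}(w)(P-I)\1_N = \1_N$. For column stochasticity I would use the left-eigenvector hypothesis $w^\top P = w^\top$, i.e. $w^\top(P-I) = \0^\top$. Then $\1_N^\top \overline P = \1_N^\top + \mu\,\1_N^\top \mathrm{diag}(w)(P-I) = \1_N^\top + \mu\, w^\top(P-I) = \1_N^\top$, so all column sums equal $1$. Combined with nonnegativity (already shown for the admissible range of $\mu$), this makes $\overline P$ a nonnegative doubly stochastic matrix, which is the main assertion.

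I do not anticipate a genuine obstacle here: the argument is a short direct computation, and the only mildly delicate point is bookkeeping the sign in $[\overline P]_{ii} = 1 - \mu w_i(1-p_{ii})$ and matching it against the definition of the threshold $1/\max_{i}(1-p_{ii})w_i$ (with the implicit understanding that if $P = I$ the bound is vacuous/infinite and $\overline P = I$ trivially works). A remark worth including is that one should note $w$ can be taken strictly positive when $P$ is irreducible by Perron–Frobenius, so that $\mathrm{diag}(w)$ is invertible; but for the statement as written only $w \geq 0$ together with $w^\top P = w^\top$ is used, and nothing more is needed.
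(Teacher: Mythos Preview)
Your proposal is correct and follows essentially the same approach as the paper: both proofs verify nonnegativity via the explicit entrywise formulas $[\overline P]_{ij}=\mu w_i p_{ij}$ and $[\overline P]_{ii}=1-\mu w_i(1-p_{ii})$, and establish double stochasticity from $P\1_N=\1_N$ and $w^\top P=w^\top$. Your presentation is in fact slightly cleaner, since you factor the row and column sum arguments through $(P-I)\1_N=\0$ and $w^\top(P-I)=\0^\top$ directly.
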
 
\begin{proof}
 Since $w$ is a left eigenvector of $P$ it holds that
 \begin{equation}
 \label{eq:left_eig_prop}
 w^\top P = w^\top\, .
 \end{equation}
 In order to prove the first part of the lemma we have to show that $\overline P \bld 1 = \bld 1 $ and $\bld 1^\top \overline P = \bld 1^\top $, hence
 \begin{equation}
 \label{eq:sum_row_i_a_bar}
 \overline P \bld 1 = \bld 1-\mu  w + \mu \, \mathrm{diag}(w) P \bld 1   = \bld 1 \, .
 \end{equation} 
Analogously, 
\begin{equation}
 \label{eq:sum_col_i_a_bar}
  \bld 1^\top \overline P = \bld 1^\top-\mu w^\top + \mu w^\top P  = \bld 1^\top \, ,
 \end{equation} 
where the last equality is achieved by means of \eqref{eq:left_eig_prop}. Therefore, from \eqref{eq:sum_row_i_a_bar}--\eqref{eq:sum_col_i_a_bar} we conclude that $\overline P$ is doubly stochastic.

The diagonal elements of $\overline P$ are  $\overline p_{ii}:=1-\mu w_i+\mu w_i p_{ii}$  the off diagonal ones are instead $\overline p_{ij}:=\mu w_i p_{ij}$, $\forall i,j\in \ca N$. From simple calculations, it follows that if $0\leq \mu \leq \frac{1}{\max_{i\in\ca N}{(1-p_{ii})w_i}}$ then $\overline P$ is a non negative matrix.

If $\mu < \frac{1}{\max_{i\in\ca N}{(1-p_{ii})w_i}}$ then $\overline p_{ii}>0$ for all $i\in \ca N$, hence $\overline{P}$ is doubly stochastic and with positive diagonal elements.  
\end{proof}   
\smallskip

%
In the next corollary we consider the case of a matrix $A$ satisfying Standing Assumption~\ref{ass:row_stoch}. 
It shows that the transformation \eqref{eq:A_transformation} does not change the set of fixed points of $A$. Furthermore, it also provides the coefficient of averagedness of $\overline A$.
\smallskip
\begin{lemma}\label{cor:A_transf}
Let the matrix $A$ satisfies Standing Assumption~\ref{ass:row_stoch} and $Q$ as in Theorem~\ref{th:convergence_synch_unc}. Then the matrix
\begin{equation}
\label{eq:A_tran_Q}
\overline{A}:= I +  Q (A-I)
\end{equation}
 is doubly stochastic with self-loops and the following statements hold:
 \begin{enumerate}[(i)]
 \item $\fix(\overline{A}\cdot)=\fix(A\cdot)$;
\item $\overline{A}$ satisfies Standing Assumption~\ref{ass:row_stoch};
 \item $\overline{A}$ is $\chi$-AVG  in $\ca H$ \\ with $\chi\in(\max_{i\in\ca N}{(1-a_{i,i})q_i},1)$.\hfill\QEDopen
 \end{enumerate}
\end{lemma}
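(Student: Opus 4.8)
The plan is to specialize Lemma~\ref{lem:rstoch_in_dstoch} to the case $P=A$, $w=q$ (the normalized left PF eigenvector), and then to pick the scaling parameter $\mu$ to be exactly $1$, so that $\mathrm{diag}(w)(P-I)$ becomes $Q(A-I)$ and $\overline A = I + Q(A-I)$ as in \eqref{eq:A_tran_Q}. The first thing to check is that $\mu=1$ is an admissible choice, i.e.\ that $1 \le \frac{1}{\max_{i\in\ca N}(1-a_{i,i})q_i}$. Since $q$ is normalized ($\lVert q\rVert =1$) we have $q_i\le 1$ for every $i$, and $1-a_{i,i} \le 1-\underline a < 1$ by Standing Assumption~\ref{ass:row_stoch}; hence $(1-a_{i,i})q_i < 1$ for all $i$, so the bound holds strictly. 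Consequently Lemma~\ref{lem:rstoch_in_dstoch} (second part, with strict inequality) applies directly and gives that $\overline A$ is non-negative, doubly stochastic, and has strictly positive diagonal entries. Strong connectedness of the graph associated with $\overline A$ is inherited from $A$, because the off-diagonal pattern is unchanged ($\overline a_{ij} = q_i a_{ij}>0 \iff a_{ij}>0$ for $i\ne j$). This establishes statement (ii), and the "doubly stochastic with self-loops" claim in the lemma's preamble.

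For statement (i), I would argue $\fix(\overline A\,\cdot) = \fix(A\,\cdot)$ purely algebraically. A vector $x$ satisfies $\overline A x = x$ iff $Q(A-I)x = 0$. Since $Q=\diag(q_1,\dots,q_N)$ with all $q_i>0$, the matrix $Q$ is invertible, so $Q(A-I)x=0$ iff $(A-I)x=0$ iff $Ax=x$. (The same reasoning applies verbatim to the Kronecker-lifted operators $\bld A$, $\bld Q$, since $\bld Q = Q\otimes I_n$ is still invertible.) Hence the two fixed-point sets coincide. In particular, combined with Remark~\ref{rem:NWE_as_fix}, this is what guarantees that the modified dynamics \eqref{eq:dynamics_tv_transf} target the same p-NWE set as the original \eqref{eq:dynamics_tv_A}.

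For statement (iii), the point is that $\overline A$ is now a row-stochastic (indeed doubly stochastic) matrix with strictly positive diagonal, so we may re-run the argument of Lemma~\ref{lem:proxA_is_AVG_in_H_Q}(i) on $\overline A$ itself. Its smallest diagonal element is $\underline{\overline a} := \min_i \overline a_{ii} = \min_i\bigl(1 - q_i(1-a_{i,i})\bigr) = 1 - \max_i q_i(1-a_{i,i})$; writing $\overline A = (1-\chi)\Id + \chi B$ with $B$ row stochastic, irreducible, non-negative forces $\chi > 1-\underline{\overline a} = \max_{i\in\ca N} q_i(1-a_{i,i})$, and any such $\chi\in(\max_i q_i(1-a_{i,i}),1)$ works. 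Crucially, because $\overline A$ is \emph{doubly} stochastic, its left PF eigenvector is $\1$, so the weighting matrix from Lemma~\ref{lem:proxA_is_AVG_in_H_Q} is the identity and the averagedness holds in $\ca H_I$ — which is exactly the uniform space needed in Appendix~\ref{app:tv_theorems} to handle arbitrary switching. The main (very mild) obstacle is just the bookkeeping at the boundary: checking the admissibility of $\mu=1$ and being careful that the strict inequality $(1-a_{i,i})q_i<1$ (not merely $\le 1$) holds so that the diagonal of $\overline A$ is \emph{strictly} positive; everything else is a direct transcription of earlier results. \hfill$\square$
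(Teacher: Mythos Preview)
Your proof is correct and follows essentially the same approach as the paper: specializing Lemma~\ref{lem:rstoch_in_dstoch} with $\mu=1$ (checking admissibility via $q_i\le 1$ and $1-a_{i,i}<1$) for (ii), using invertibility of $Q\succ 0$ for (i), and re-running the argument of Lemma~\ref{lem:proxA_is_AVG_in_H_Q}(i) on the doubly-stochastic $\overline A$ for (iii). Your version is in fact slightly more explicit than the paper's (e.g.\ computing $\underline{\overline a}=1-\max_i q_i(1-a_{i,i})$ and noting that double stochasticity is what forces the weighting to be $I$), but the route is the same.
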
  
\smallskip  
\begin{proof}
(i) The definition in \eqref{eq:A_tran_Q} is equivalent to  $\overline{A} = (I- Q)+ Q A$. Therefore $\fix(\overline{A})=\fix(A)$, since $Q\succ 0$.
\smallskip

(ii) The matrix $\overline{A}$ is in the form of \eqref{eq:A_transformation} with $\mu = 1$ and, since $1<\frac{1}{\max_{i\in\ca N}{(1-a_{i,i}(k))}}\leq \frac{1}{\max_{i\in\ca N}{(1-a_{i,i}(k))q_i}}$, it satisfies the assumption of Lemma~\ref{lem:rstoch_in_dstoch}, thus $\overline{A}$ is doubly stochastic with self-loops.
Finally, since $A$ satisfies Standing Assumption~\ref{ass:row_stoch}, the graph defined by $\overline A$ is also strongly connected, therefore also $\overline A$ satisfies Standing Assumption~\ref{ass:row_stoch}. 
\smallskip 
 
(iii) To prove that $\overline{A}$ is $\chi$-AVG with $\chi\in(\max_{i\in\ca N}{(1-a_{i,i})q_i},1)$, we can also apply \cite[Lem.~9]{grammatico:18tcns} or use the same argument as in the proof of Lemma~\ref{lem:proxA_is_AVG_in_H_Q}. 
\end{proof}   
\smallskip

\subsection*{Proof of Theorem~\ref{th:convergence_mod_tv_dyn}}
From Corollary~\ref{cor:A_transf}, each matrix $\overline{A} := I + \mu \bld Q(k)(\bld A(k)-I)$ is $(\mu(k)\max_{i\in\ca N}{(1-a_{i,i}(k))q_i(k)})$-AVG  in $\ca H_I$. Since $\bld{ \prox_f}$ is FNE in $\ca H_I$, the operator $\bld{ \prox_f}\circ \overline{\bld A}(k)$ is $\frac{1}{2-\hat\eta(k)}$ where $\hat\eta(k) := \mu(k)\max_{i\in\ca N}{(1-a_{i,i}(k))q_i(k)}$, \cite[Prop.~2.4]{Yamada2014:Composition_AVG_maps}. All the operators used in the dynamics are AVG in the same space $\ca H_I$, therefore the global convergence follows from \cite[Prop.~3.4(iii)]{Yamada2014:Composition_AVG_maps}, since all the cluster point of $(\bld x(k))_{k\in\bN}$ lay in $\ca E$. \hfill\QED

\subsection{Derivation of Prox-GNWE}
\label{app:derivation_prox_GNWE}
Here, we propose the complete derivation of Prox-GNWE.  
To ease the notation, we  define $\bld{\ca A} := \bld{\ca B}+\Id -\bld{\ca G}$, $\bld \varpi:=\col(\bld x(k),\,\sigma (k))$ and $\bld \varpi^+:=\col(\bld x(k+1),\,\sigma (k+1))$. 
Consider the following non symmetric preconditioning matrix
\begin{equation}
\label{eq:preconditioning_matrix}
\Phi := \begin{bmatrix}
\bld{\delta}^{-1}+\bld A & -\bld \Lambda C^\top\\
C & \beta I_M 
\end{bmatrix}
\end{equation}
where $\bld \delta:=\diag((\delta_i)_{i\in\ca N})\otimes I_n$, $\delta_i\in\bR_{>0}$ and $\beta\in\bR_{>0}$. 

%

The  update rule of the modified  PPP algorithm \cite[Eq.~4.18]{briceno:davis:f_b_half_algorithm}, reads as 
\begin{subequations}
\label{eq:PPP_update_rule}
\begin{align}
\label{eq:PPP_update_rule1}
\tilde{\bld \varpi} &=\mathrm{J}_{\Phi^{-1} \bld{\ca A} } \bld \varpi \\
\label{eq:PPP_update_rule2}
\bld \varpi^+ &= \bld \varpi + \gamma \Phi (\tilde{\bld \varpi}-\bld \varpi)
\end{align}  
\end{subequations}
where $\mathrm{J}_{\Phi^{-1} \bld{\ca A} }:=\mathrm{J}_{U^{-1} (\bld{\ca A} +S)}(\Id + U^{-1}S)$ and $\gamma\in>0$.

Let us define its \textit{self-adjoint} and \textit{skew symmetric} components as $U := (\Phi+\Phi^\top)/2$ and $S:=(\Phi-\Phi^\top)/2$. \\
We choose the parameters $\bld \delta$ and $\beta$  to ensure that $U\succ 0 $ and $\lVert U \rVert\leq \gamma^{-1}$, where $\gamma$ will be the step--size of the algorithm. This can be done through the \textit{Gerschgorin Circle Theorem} \cite[Th.~2]{feingold:varga:1962:gerschgorin_circle}. The resulting bounds are reported in \eqref{cond:coeff_Alg_1}.

\smallskip
\begin{remark}
The set of fixed points of the mapping defining the whole update in \eqref{eq:PPP_update_rule1}--\eqref{eq:PPP_update_rule2}, coincides with $\zer(\bld{\ca A})$, \cite[Proof of Th.~4.2]{briceno:davis:f_b_half_algorithm} .\hfill\QEDopen  
\end{remark}
\smallskip

Next, we proceed to compute the explicit formulation of the algorithm. First, we focus on  \eqref{eq:PPP_update_rule1} , so
\begin{subequations}
\begin{align}
\tilde{\bld \varpi} = \mathrm{J}_{U^{-1} (\bld{\ca A} +S)}(\Id + U^{-1}S) \bld \varpi \\
\tilde{\bld \varpi} + U^{-1} (\bld{\ca A} +S)\tilde{\bld \varpi} \ni \bld \varpi + U^{-1}S\bld \varpi\\
\bld 0 \in U(\tilde{\bld \varpi}- \bld \varpi) + \bld{\ca A}\tilde{\bld \varpi} +S(\tilde{\bld \varpi}- \bld \varpi)\\\label{eq:PPP_update_expr}
\bld 0 \in \Phi(\tilde{\bld \varpi}- \bld \varpi) + \bld{\ca A}\tilde{\bld \varpi} 
\end{align}
\end{subequations}  
  
Solving the first row block of \eqref{eq:PPP_update_expr}, i.e. $\bld 0 \in (\bld \delta^{-1} +\bld A)(\tilde{ \bld x} - \bld x) - \bld \Lambda C^\top(\tilde \sigma-\sigma) +F(\tilde{ \bld x})+\tilde{ \bld x} -\bld A \tilde{ \bld x} + \bld \Lambda C^\top \tilde \sigma  $ , leads to
\begin{align*}
\bld 0_{nN} &\in \bld \delta^{-1}(\tilde{ \bld x} - \bld x) -\bld A \bld x + \bld \Lambda C^\top\sigma +F(\tilde{ \bld x})+\tilde{ \bld x}\\
\bld 0_{nN} &\in (\bld \delta^{-1}+I)\tilde{ \bld x} +F(\tilde{ \bld x}) - \bld \delta^{-1}\bld x -\bld A \bld x + \bld \Lambda C^\top\sigma 
\end{align*} 
with a slight abuse of notation let us define $\frac{1}{\bld \delta^{-1}+1}\coloneqq \diag\left(\big(\frac{1}{\bld \delta_i^{-1}+1}\big)_{i\in\ca N}\right)\otimes I_n$
then we obtain
\begin{align}\nonumber
\bld 0_{nN} &\in \tilde{ \bld x} +\textstyle{\frac{1}{\bld \delta^{-1}+1}} F(\tilde{ \bld x}) +\textstyle{\frac{1}{\bld \delta^{-1}+1}}\big[ \bld \Lambda C^\top\sigma - \bld \delta^{-1}\bld x -\bld A \bld x  \big]\\ \label{eq:first_row_block}
 \tilde{ \bld x} &=  \mathrm{J}_{\scriptstyle{\frac{1}{\bld \delta^{-1}+1}} F} \left( \textstyle{\frac{1}{\bld \delta^{-1}+1}}\big[ \bld \delta^{-1}\bld x +\bld A \bld x  -\bld \Lambda C^\top\sigma \big]  \right)
\end{align}

The second row block instead reads as $\bld 0 \in C (\tilde{ \bld x} - \bld x) +\beta (\tilde \sigma-\sigma) +N_{\bR^{M}_{\geq 0}}(\tilde \sigma) + \tilde \sigma - C \tilde{ \bld x} +c $, and leads to 
\begin{subequations}
\begin{align}\label{eq:second_row_block 1}
\bld 0_{M} &\in -C\bld x +\beta (\tilde \sigma-\sigma) +N_{\bR^{M}_{\geq 0}}(\tilde \sigma) +c  \\\label{eq:second_row_block 2}
\tilde \sigma &= \mathrm{J}_{N_{\bR^{M}_{\geq 0}}}\left( \sigma+\textstyle{ \frac{1}{\beta} } (C\bld x -c)  \right)
\end{align}
\end{subequations}

The final formulation of Prox-GNWE, presented in \eqref{eq:Prox-GNWE 1}~--~\eqref{eq:Prox-GNWE 4}, is obtained combining \eqref{eq:first_row_block} and \eqref{eq:second_row_block 2}  with \eqref{eq:PPP_update_rule2}.

\subsection{Convergence proof of Prox-GNWE}
\label{app:proof_conv_prox_GNWE}

Also in this case, we first propose an auxiliary lemma.

\smallskip
\begin{lemma} \label{lemma:max_mon_of_G}
Let $\bld \Lambda = \bld Q$ with $\boldsymbol Q$ is chosen in accordance with Theorem~\ref{th:convergence_synch_unc}. Then the mapping $\Id - \boldsymbol{\ca G}$ from (\ref{eq:ca_G_map}) is maximally monotone in $\mathcal{H}_{\overline {\bld Q} }$ where $\boldsymbol{\overline {\bld Q} } = \mathrm{diag}(\boldsymbol Q,\,I_M)$.
\hfill \QEDopen
\end{lemma}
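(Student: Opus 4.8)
The plan is to reduce the claim to a single positive‑semidefiniteness inequality on a matrix. First I would observe that $\boldsymbol{\ca G}$ in \eqref{eq:ca_G_map} is affine, so $\Id-\boldsymbol{\ca G}$ is affine, single‑valued and everywhere defined on $\bR^{nN+M}$, hence (Lipschitz) continuous; since $\overline{\bld Q}=\mathrm{diag}(\boldsymbol Q,I_M)\succ 0$ (recall $q_i>0$), $\ca H_{\overline{\bld Q}}$ is a genuine Hilbert space, and a continuous, full‑domain monotone operator on it is automatically maximally monotone, see e.g.\ \cite[Cor.~20.25]{Bauschke2010:ConvexOptimization}. Adding the constant term appearing in $\boldsymbol{\ca G}$ does not affect (maximal) monotonicity, so it would remain to prove that the linear part $M:=I_{nN+M}-\boldsymbol G$ is monotone in $\ca H_{\overline{\bld Q}}$, i.e.\ $\langle M v\mid v\rangle_{\overline{\bld Q}}\ge 0$ for every $v\in\bR^{nN+M}$, equivalently $\overline{\bld Q}M+M^{\top}\overline{\bld Q}\succeq 0$.

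Next I would evaluate this quadratic form blockwise. Writing $v=\col(\xi,\zeta)$ with $\xi\in\bR^{nN}$, $\zeta\in\bR^{M}$, and using $\boldsymbol G=\left[\begin{smallmatrix}\boldsymbol A & -\bld\Lambda C^{\top}\\ C & I_M\end{smallmatrix}\right]$, hence $M=\left[\begin{smallmatrix}I_{nN}-\boldsymbol A & \bld\Lambda C^{\top}\\ -C & 0\end{smallmatrix}\right]$, one obtains
\begin{equation*}
\langle M v\mid v\rangle_{\overline{\bld Q}}=\langle (I-\boldsymbol A)\xi\mid\xi\rangle_{\boldsymbol Q}+\big(\langle \bld\Lambda C^{\top}\zeta\mid\xi\rangle_{\boldsymbol Q}-\langle C\xi\mid\zeta\rangle\big),
\end{equation*}
with no $\zeta$--$\zeta$ contribution because the $(2,2)$ block of $\boldsymbol G$ equals $I_M$, which is cancelled by $\Id$. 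The first term is nonnegative: by Lemma~\ref{lem:proxA_is_AVG_in_H_Q}(i) the matrix $\boldsymbol A$ is $\eta$‑averaged, in particular nonexpansive, in $\ca H_{\boldsymbol Q}$, so for the linear operator $\boldsymbol A$ one has $\langle(I-\boldsymbol A)\xi\mid\xi\rangle_{\boldsymbol Q}=\lVert\xi\rVert_{\boldsymbol Q}^{2}-\langle\boldsymbol A\xi\mid\xi\rangle_{\boldsymbol Q}\ge\lVert\xi\rVert_{\boldsymbol Q}^{2}-\lVert\boldsymbol A\xi\rVert_{\boldsymbol Q}\lVert\xi\rVert_{\boldsymbol Q}\ge 0$ by Cauchy--Schwarz (one even retains the $\tfrac{1-\eta}{\eta}\lVert(I-\boldsymbol A)\xi\rVert_{\boldsymbol Q}^{2}$ slack coming from averagedness).

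The crux is the bracketed cross term, and this is precisely where the choice $\bld\Lambda=\boldsymbol Q$ together with the block form $\overline{\bld Q}=\mathrm{diag}(\boldsymbol Q,I_M)$ is designed to work: the plan is to show that the coupling operator $\left[\begin{smallmatrix}0 & \bld\Lambda C^{\top}\\ -C & 0\end{smallmatrix}\right]$ is skew‑adjoint with respect to $\langle\cdot\mid\cdot\rangle_{\overline{\bld Q}}$, equivalently that $\langle \bld\Lambda C^{\top}\zeta\mid\xi\rangle_{\boldsymbol Q}=\langle C\xi\mid\zeta\rangle$ for all $\xi,\zeta$ — i.e.\ that the primal weight and $\bld\Lambda$ cancel against the transpose pairing of $C$. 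A skew‑adjoint linear operator contributes nothing to a symmetrized quadratic form, so the bracket vanishes, leaving $\langle Mv\mid v\rangle_{\overline{\bld Q}}=\langle(I-\boldsymbol A)\xi\mid\xi\rangle_{\boldsymbol Q}\ge 0$, which gives monotonicity; maximality then follows from the first step. I expect verifying this skew‑adjointness identity to be the only genuine computation and the main obstacle — everything else is bookkeeping on block matrices. Should the identity leave a residual rather than an exact cancellation, the natural fallback is to absorb it into the averagedness slack $\tfrac{1-\eta}{\eta}\lVert(I-\boldsymbol A)\xi\rVert_{\boldsymbol Q}^{2}$, but since that slack only controls $\xi$ on $\mathrm{ran}(I-\boldsymbol A)$, an exact cancellation is what one really wants here.
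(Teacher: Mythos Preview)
Your plan is essentially the paper's proof: reduce monotonicity of $\Id-\boldsymbol{\ca G}$ in $\ca H_{\overline{\bld Q}}$ to the block matrix inequality $2\overline{\bld Q}-(\boldsymbol G^{\top}\overline{\bld Q}+\overline{\bld Q}\boldsymbol G)\succeq 0$, show that the off–diagonal blocks cancel thanks to the choice $\bld\Lambda=\bld Q$ together with $\overline{\bld Q}=\mathrm{diag}(\bld Q,I_M)$ (your ``skew–adjointness'' step is exactly the paper's block computation), and conclude that the remaining condition $2\bld Q-(\bld A^{\top}\bld Q+\bld Q\bld A)\succeq 0$ holds because $\bld A$ is averaged (hence nonexpansive) in $\ca H_{\bld Q}$ by Lemma~\ref{lem:proxA_is_AVG_in_H_Q}. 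The paper then invokes \cite[Ex.~20.7,~Ex.~20.34]{Bauschke2010:ConvexOptimization} where you invoke Cauchy--Schwarz and \cite[Cor.~20.25]{Bauschke2010:ConvexOptimization}; these are interchangeable, so the ``main obstacle'' you flag is precisely the single computation the paper performs.
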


\smallskip
\begin{proof}
The mapping $\Id - \boldsymbol{\ca G}$ is monotone if and only if $2\boldsymbol{\overline Q}-(\boldsymbol{G}^\top \boldsymbol{\overline Q} + \boldsymbol{\overline Q} \boldsymbol{G})\succeq 0$ (by \cite[Lemma~3]{grammatico:parise:colombino:lygeros:16}).
\begin{equation}\label{eq:G_monotone}
\begin{split}
2\boldsymbol{\overline Q}-(\boldsymbol{G}^\top \boldsymbol{\overline Q} + \boldsymbol{\overline Q} \boldsymbol{G}) & \succeq 0\\
2\boldsymbol{\overline Q} - \left ( \begin{bmatrix}
\boldsymbol{A}^\top \boldsymbol{Q}  &  \boldsymbol{Q} C^\top \\
-C\bld Q & I
\end{bmatrix} + \begin{bmatrix}
\boldsymbol{Q}\boldsymbol{A}   &  -\bld QC^\top \\
\boldsymbol{Q} C & I
\end{bmatrix}\right) & \succeq 0\\
\begin{bmatrix}
2\boldsymbol{Q} - \left ( \boldsymbol{A}^\top \boldsymbol{Q} +\boldsymbol{Q} \boldsymbol{A} \right) &  0\\
0 & 0
\end{bmatrix}  & \succeq 0
\end{split}
\end{equation}
The last inequality in (\ref{eq:G_monotone}) holds if and only if $\Id-\boldsymbol{A}$ is monotone in $\ca H_{\boldsymbol Q}$. The matrix $\bld Q$  is chosen as in Theorem~\ref{th:convergence_synch_unc}, thus $\boldsymbol{A}$ is $\eta$-AVG in $\ca H_{\boldsymbol Q}$ with $\eta\in(0,1-\underline{a})$. From \cite[Example~20.7]{Bauschke2010:ConvexOptimization} we conclude that $\Id-\boldsymbol{A}$ is monotone.  Therefore, also $\Id-\bld{\ca G}$ is monotone. Finally, since we considered a bounded linear operator we conclude the maximally monotonicity of $\Id-\boldsymbol{\ca G}$ in $\ca H_{\overline{\boldsymbol Q}}$ invoking \cite[Example~20.34]{Bauschke2010:ConvexOptimization}.
\end{proof}
\smallskip

\subsection*{Proof of Theorem~\ref{th:convergence_Prox_GNWE}}
By Lemma~\ref{lemma:max_mon_of_G}, we know that $\Id-\bld{\ca G}$ is maximally monotone. Using a similar reasoning to the one in the proof if Theorem~\ref{th:convergence_synch_unc}, we conclude that $\bld{\ca R}$ is firmly nonexpansive in $\ca H_{\overline{\bld{ Q}} }$. The domain of the operator $\bld{\ca R}$ is the entire space, thus applying \cite[Prop.~23.8(iii)]{bauschke:12} we deduce that $\bld{\ca B}$ is maximally monotone.\\  
The mapping $\bld{\ca B} + \Id -\bld{\ca G}$ is the sum of two maximally monotone operators, so \cite[Corollary~25.5(i)]{Bauschke2010:ConvexOptimization} ensures  the maximal monotonicity of the sum. 
The step sizes $\boldsymbol \delta$ and $\beta$ are chosen as in \eqref{cond:coeff_Alg_1}, hence $U$ is positive definite.

Applying \cite[Th.~4.2]{briceno:davis:f_b_half_algorithm} ensures the convergence of the sequence generated by the update rule \eqref{eq:PPP_update_rule1}~--~\eqref{eq:PPP_update_rule2} to a point in $\zer(\bld{\ca B}+\Id-\bld{\ca G})$.
The proof is concluded  invoking Lemma~\ref{lemma:GNWE_as_fixed_point} and \ref{lemma:fixed_point_as_zero}.
\hfill $\blacksquare$

\bibliographystyle{IEEEtran}
\bibliography{diary_bibliography,librarySG}
\vspace{-0.9cm}
\begin{biography}[{\includegraphics[trim=65 0 65 0,width=1in,height=1.25in,clip,keepaspectratio]{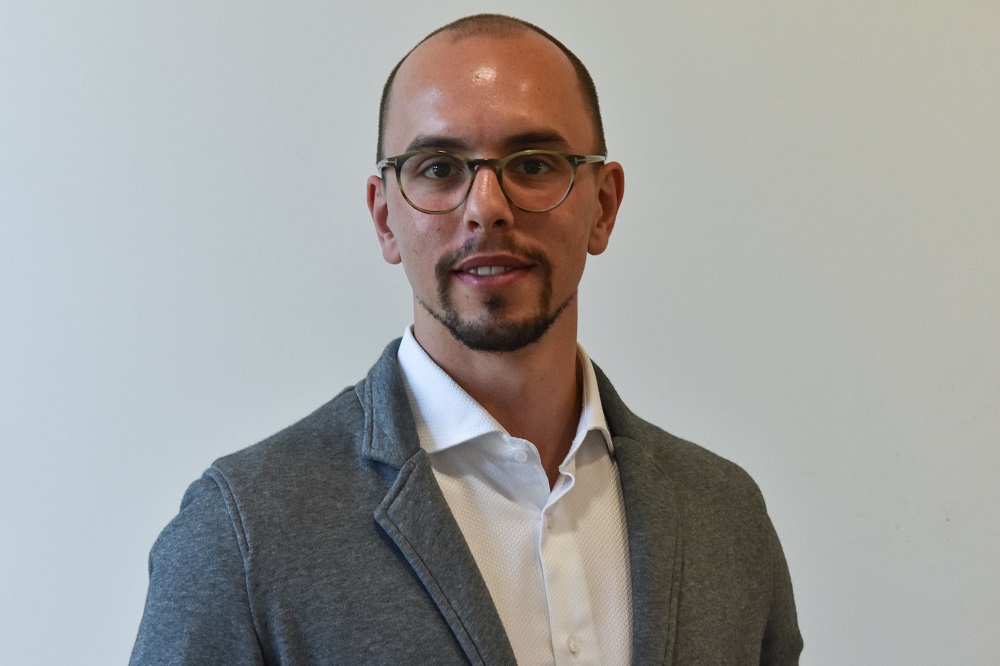}}]{Carlo Cenedese}
Carlo Cenedese is a Doctoral Candidate, since 2017, with the Discrete Technology and Production Automation (DTPA) Group in the Engineering and Technology Institute (ENTEG) at the University of Groningen, the Netherlands. He was born in Treviso, Italy, in 1991, he received the Bachelor degree in Information Engineering in September 2013 and the Master degree in Automation Engineering in July 2016, both at the University of Padova, Italy. From August to December 2016, Carlo worked for the company VI-grade srl in collaboration with the  Automation Engineering group of Padova.
His research interests include game theory, distributed optimization, complex networks and multi-agent network systems associated to decision-making processes.
\end{biography} 
\begin{biography}[{\includegraphics[width=1in,height=1.25in,clip,keepaspectratio]{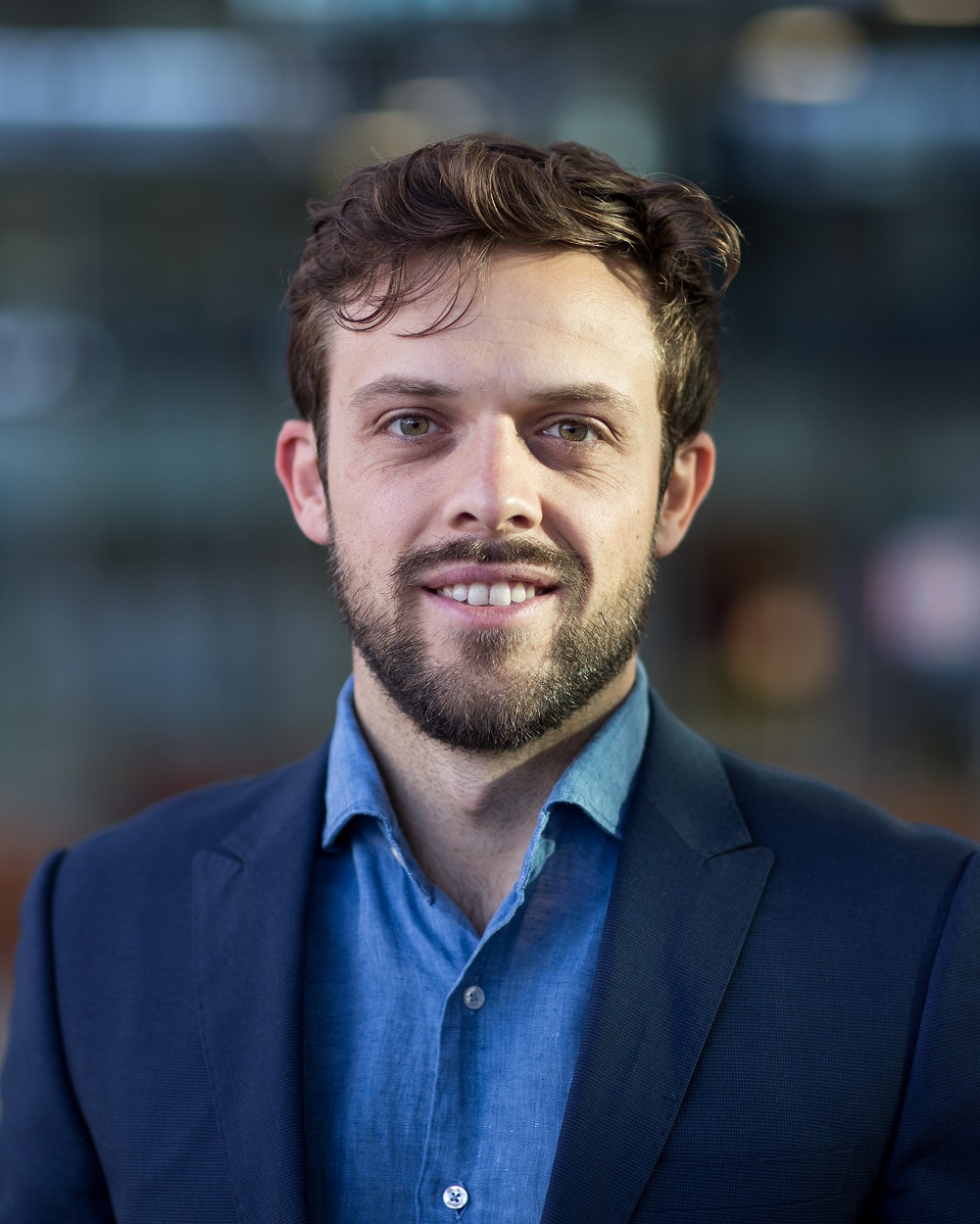}}]{Giuseppe Belgioioso}
Giuseppe Belgioioso is currently a Doctoral Candidate
in the Control System (CS) Group at Eindhoven
University of Technology, The Netherlands. Born in
Venice, Italy, in 1990, he received the Bachelor degree
in Information Engineering in September 2012
and the Master degree (cum laude) in Automation
Engineering in April 2015, both at the University of
Padova, Italy. From Febraury to July 2014 he visited
the department of Telecommunication Engineering
at the University of Las Palmas the Gran Canaria
(ULPGC), Spain. In June 2015 Giuseppe won a
scholarship from the University of Padova and joined the Automation Engineering
group until April 2016 as a Research Fellow. From Febraury to June
2019 he visited the School of Electrical, Computer and Energy Engineering
at Arizona State University (ASU), USA. His research interests include game
theory, operator theory and distributed optimization for studying and solving
coordination, decision and control problems that arise in complex network
systems, such as power grids, communication and transportation networks.
\end{biography} 
\begin{biography}[{\includegraphics[width=1in,height=1.25in,clip,keepaspectratio]{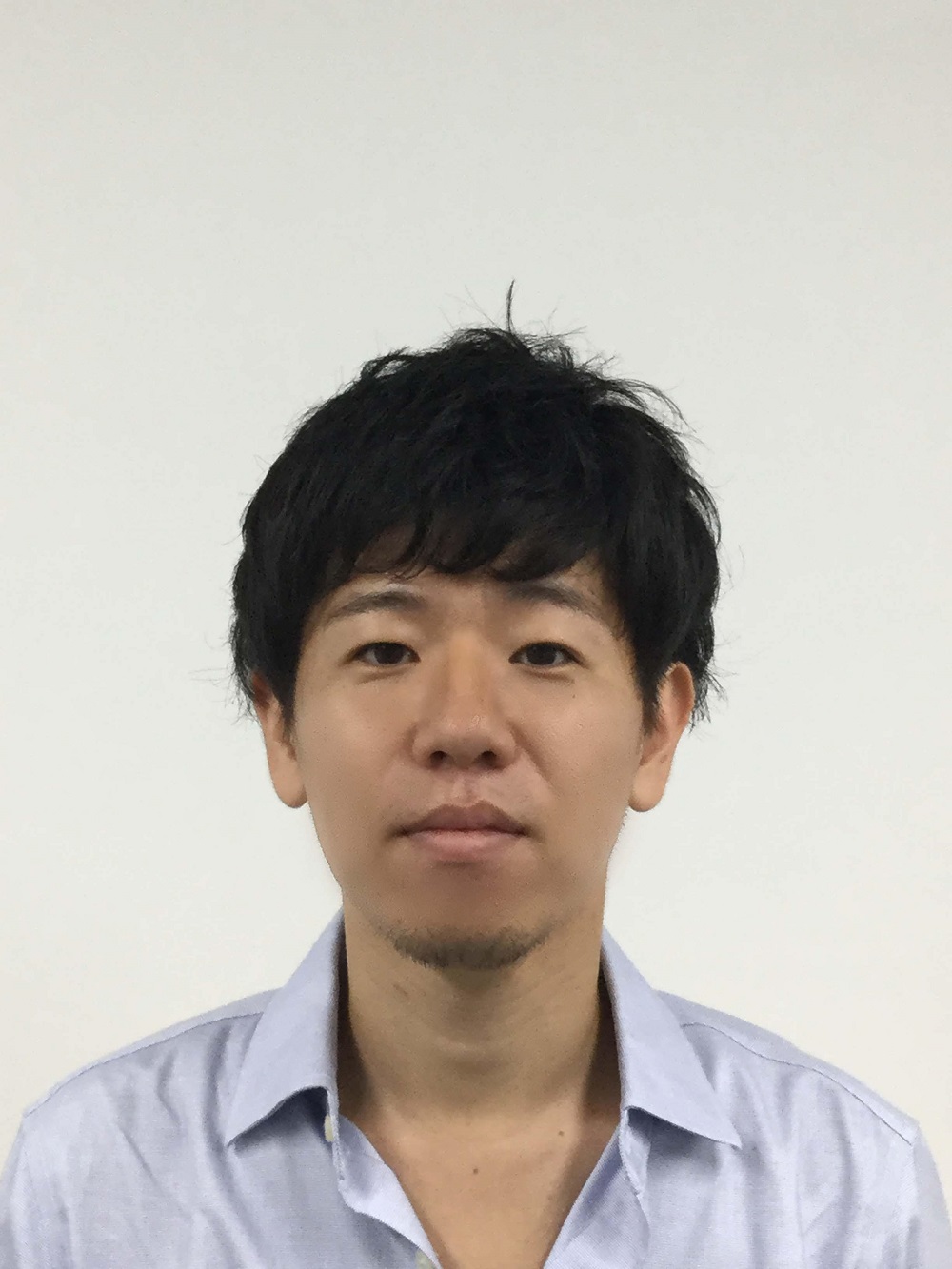}}]{Yu Kawano}(M13) 
is currently Associate Professor in Department of Mechanical Systems Engineering at Hiroshima University. He received the M.S. and Ph.D. degrees in engineering from Osaka University, Japan, in 2011 and 2013, respectively. From October 2013 to November 2016, he was a Post-Doctoral researcher at both Kyoto University and JST CREST, Japan. From November 2016 to March 2019, he was a Post-Doctoral researcher at the University of Groningen, The Netherlands. He has held visiting research positions at Tallinn University of Technology, Estonia and the University of Groningen and served as a Research Fellow of the Japan Society for the Promotion Science. His research interests include nonlinear systems, complex networks, and model reduction.
\end{biography}
\begin{biography}[{\includegraphics[width=1in,height=1.25in,clip,keepaspectratio]{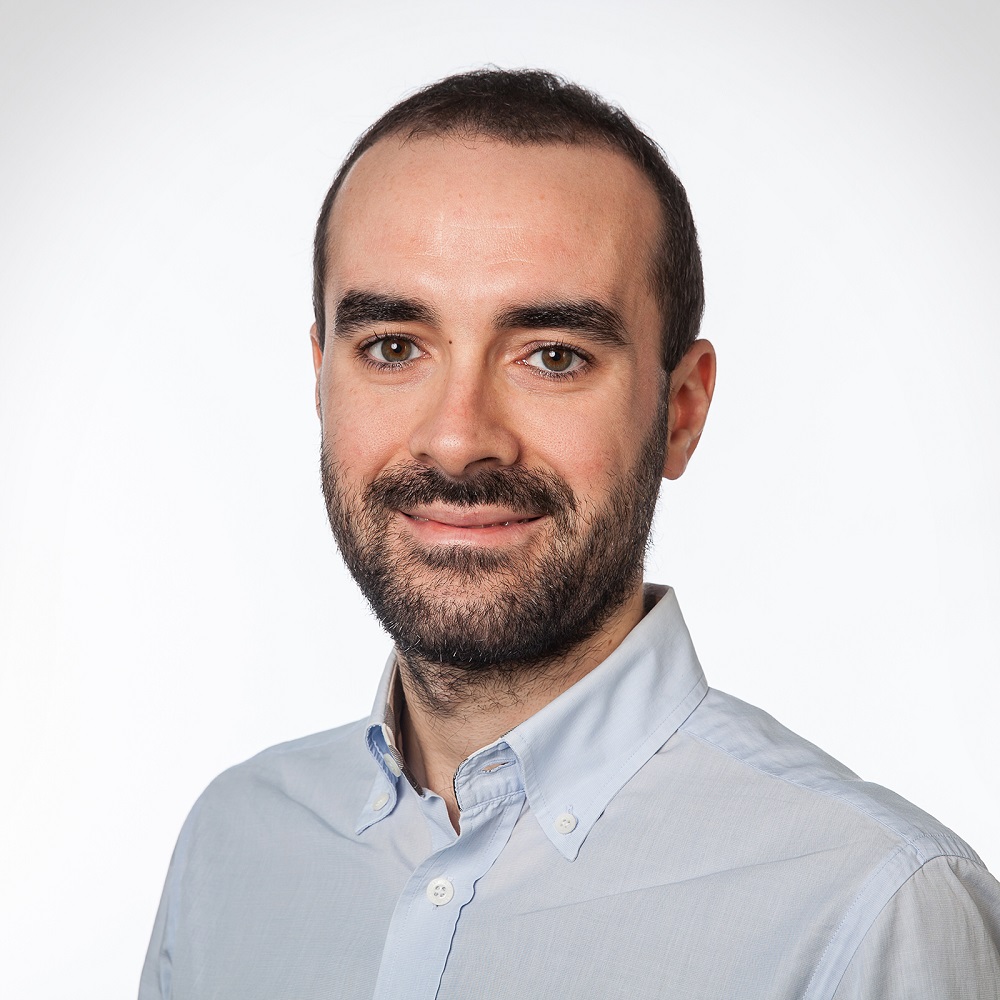}}]{Sergio Grammatico}
(M16,S19) is an Associate Professor in the Delft Center for Systems and Control, TU Delft, The Netherlands. Born in Italy, in 1987, he received the Bachelor degree in Computer Engineering, the Master degree in Automatic Control Engineering, and the Ph.D. degree in Automatic Control, all from the University of Pisa, in 2008, 2009, and 2013 respectively. He also received a Master degree in Engineering Science from the Sant’Anna School of Advanced Studies, Pisa, the Italian Superior Graduate School (Grand École) for Applied Sciences, in 2011. In 2012, he visited the Department of Electrical and Computer Engineering at the University of California at Santa Barbara; in 2013-2015, he was a post-doctoral Research Fellow in the Automatic Control Laboratory at ETH Zurich; in 2015-2017, he was an Assistant Professor in the Control Systems group at TU Eindhoven. He is an Associate Editor for IFAC Nonlinear Analysis: Hybrid Systems, IEEE Transactions on Automatic Control and IEEE CSS Conference Editorial Board. He was awarded 2013 and 2014 TAC Outstanding Reviewer and he was recipient of the Best Paper Award at the 2016 ISDG International Conference on Network Games, Control and Optimization. His research revolves around game theory, optimization and control for complex multi-agent systems, with applications in power grids and automated driving.
\end{biography} 
\begin{biography}[{\includegraphics[width=1in,height=1.25in,clip,keepaspectratio]{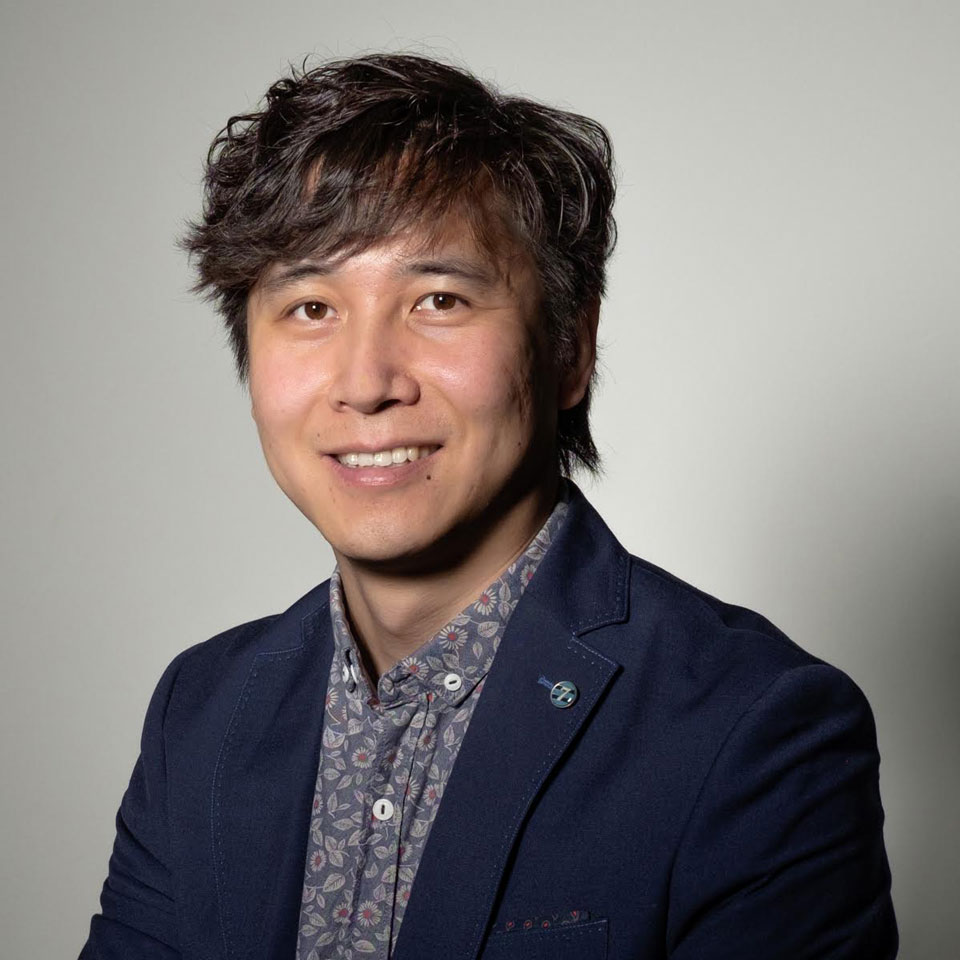}}]{Ming Cao}
has since 2016 been a professor of systems and control with the Engineering and Technology Institute (ENTEG) at the University of Groningen, the Netherlands, where he started as a tenure-track Assistant Professor in 2008. He received the Bachelor degree in 1999 and the Master degree in 2002 from Tsinghua University, Beijing, China, and the Ph.D. degree in 2007 from Yale University, New Haven, CT, USA, all in Electrical Engineering. From September 2007 to August 2008, he was a Postdoctoral Research Associate with the Department of Mechanical and Aerospace Engineering at Princeton University, Princeton, NJ, USA. He worked as a research intern during the summer of 2006 with the Mathematical Sciences Department at the IBM T. J. Watson Research Center, NY, USA. He is the 2017 and inaugural recipient of the Manfred Thoma medal from the International Federation of Automatic Control (IFAC) and the 2016 recipient of the European Control Award sponsored by the European Control Association (EUCA). He is a Senior Editor for Systems and Control Letters, and an Associate Editor for IEEE Transactions on Automatic Control, IEEE Transactions on Circuits and Systems and IEEE Circuits and Systems Magazine. He is a vice chair of the IFAC Technical Committee on Large-Scale Complex Systems. His research interests include autonomous agents and multi-agent systems, complex networks and decision-making processes.  
\end{biography}
 
\end{document}